				\def\s{\sigma }
				\def\Z{{\mathbb Z}}
				\def\Q{{\mathbb Q}}
				\def\C{{\mathbb C}}
				\def\cal{\mathcal }
				\def\l{\left }
				\def\r{\right }
				\def\<{\l\langle}
				\def\>{\r\rangle}
				\def\Poincare{Poincar\'e }
				\def\tb{\mbox{tb}}
				\def\lk{\ell k}
				\newtheorem{thm}{Theorem}[section]
				\newtheorem{cor}[thm]{Corollary}
				\newtheorem{proposition}[thm]{Proposition}
				\newtheorem{lem}[thm]{Lemma}
				\theoremstyle{definition}
				\newtheorem{definition}[thm]{Definition}
				\newtheorem{notation}[thm]{Notation}
				\newtheorem{remark}[thm]{Remark}
	\title{On Handlebody Structures of Rational Balls}
	\author{Luke Williams}
	\address{Mathematics Department, Michigan State University, East Lansing, MI 48824}
	\email{will2086@math.msu.edu}
\begin{document}
	\begin{abstract}
		It is known that for coprime integers $p>q\geq 1$, the lens space $L(p^2,pq-1)$ bounds 
		a rational ball, $B_{p,q}$, arising as the 2-fold branched cover of a (smooth)
		slice disk in $B^4$ bounding the associated 2-bridge knot.  
		Lekilli and Maydanskiy \cite{Lekili-Bpq} give handle decompositions
		for each $B_{p,q}$. Whereas,
		Yamada \cite{Yamada-Amn} gives an alternative definition of 
		rational balls, $A_{m,n}$, bounding $L(p^2,pq-1)$ by 
		their handlebody decompositions alone.
		We show that these two families coincide - answering a question
		of Kadokami and Yamada in \cite{Yamada-LensSurgeries}.  To that end, we show that each
		$A_{m,n}$ admits a Stein filling of the ``standard'' contact structure, 
		$\bar{\xi}_{st}$, on $L(p^2,pq-1)$ investigated by Lisca
		in \cite{Lisca-LensFillings}.  
	\end{abstract}		
	
	\maketitle
\section{Introduction}\label{Section: Introduction}
	{
		For $p>q\geq 1$ relatively prime, let $B_{p,q}$ be the 4-manifold 
		obtained by attaching a 1-handle and a single 2-handle with 
		framing $pq-1$ to $B^4$
		by wrapping the attaching circle of the 2-handle $p$-times around the 
		1-handle with a $q/p$-twist 
		(see the left side of Figure \ref{Figure: Boundary Diff}).
		From this description, it is immediate that $B_{p,q}$ is
		always a rational homology ball.  Lekili and Maydanskiy 
		\cite{Lekili-Bpq} show that each such $B_{p,q}$ 
		arises as the 2-fold branched cover of $B^4$ branched over a slice disk 
		for the (slice) 2-bridge knot associated to the fraction $-p^2/(pq-1)$.
		That is, the family $B_{p,q}$ represents
		handle decompositions of the rational balls introduced by Casson
		and Harer in \cite{CassonHarer-RationalBalls}.		
		As such, $\partial B_{p,q}\approx L(p^2,pq-1)$ - throughout 
		$\approx$ denotes when two manifolds are diffeomorphic.
		
		In a similar direction, Yamada  \cite{Yamada-Amn}
		defines a family of rational 
		balls bounding $L(p^2,pq-1)$ directly via their handle decompositions:  
		For $n, m\geq 1$ relatively prime, let $A_{m,n}$ be the 4-manifold 
		obtained by attaching a 1-handle and a single 2-handle with 
		framing $mn$ to $B^4$
		by attaching the 2-handle along a simple closed curve 
		embedded on a once-punctured torus viewed in $S^1\times S^2$
		so that the attaching circle traverses the two 
		1-handles of the torus $m$ and $n$ times 
		respectively  
		(see the right side of Figure \ref{Figure: Boundary Diff}).
		Yamada goes on to define an involutive symmetric function, $A$,
		on the set of coprime pairs of positive integers such that
		if $A(p-q,q)=(m,n)$ then $\partial A_{m,n}\approx L(p^2,pq-1)$
		(see Lemma \ref{Lemma: Defining A(p-q,q)} for a definition of $A$).
		
		Given these two constructions of rational balls with
		coincident boundaries, one arrives at a natural question
		posed by Kadokami and Yamada in \cite{Yamada-LensSurgeries} as Problem 1.9: 
		When are these two families diffeomorphic, 
		homeomorphic, or even homotopic relative to their boundaries 
		as 4-manifolds?  We provide a complete
		answer to this question by proving the following theorem.	
		\begin{thm}\label{Each Amn is Stein}
			For each pair of relatively prime positive integers, $(m,n)$, $A_{m,n}$
			carries a Stein structure, $\widetilde{J}_{m,n}$, 
			filling a contact structure contactomorphic to 
			the standard contact structure 
			$\bar{\xi}_{\mbox{st}}$ on the lens space $\partial A_{m,n}$.  
			In particular,
			each $A_{m,n}\approx B_{p,q}$ if and only if 
			$\partial A_{m,n}\approx \partial B_{p,q}$.
		\end{thm}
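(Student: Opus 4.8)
The plan is to recognize the handle decomposition of $A_{m,n}$ as a Stein handlebody in Gompf's sense and then to identify the contact structure it induces on the boundary. Write $\gamma_{m,n}$ for the attaching circle of the $2$-handle of $A_{m,n}$: a simple closed curve on an embedded once-punctured torus $\Sigma\subset S^1\times S^2=\partial(B^4\cup h^1)$ which traverses the two handles of $\Sigma$ a total of $m$ and $n$ times, carried with the framing $mn$. First I would put $(S^1\times S^2,\xi_{\mathrm{std}})$ in standard form (a single Legendrian $1$-handle) and use the Legendrian realization principle on a convex copy of $\Sigma$ to isotope $\gamma_{m,n}$ to a Legendrian knot $\Lambda_{m,n}$. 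The key computation is to produce an explicit front for $\Lambda_{m,n}$ — read off from Yamada's diagram — whose Thurston--Bennequin framing equals $mn+1$, and to record its rotation number $r(\Lambda_{m,n})$. Granting this, the prescribed framing satisfies $mn=\mathrm{tb}(\Lambda_{m,n})-1$, so Gompf's criterion produces the desired Stein structure $\widetilde{J}_{m,n}$ on $A_{m,n}$.

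Second, I would identify the induced contact structure $\xi:=\xi_{\widetilde{J}_{m,n}}$ on $\partial A_{m,n}$. It is tight by Gromov--Eliashberg, and when $A(p-q,q)=(m,n)$ its underlying manifold is $L(p^2,pq-1)$ by Lemma \ref{Lemma: Defining A(p-q,q)}. To single $\xi$ out among the finitely many tight contact structures on this lens space, one route is to compute $c_1(\widetilde{J}_{m,n})\in H^2(A_{m,n})$ from $r(\Lambda_{m,n})$, compare it with the homotopy-theoretic data of $\bar{\xi}_{\mathrm{st}}$, and upgrade the match to a contactomorphism via Honda's classification. A cleaner route, available because $A_{m,n}$ is a rational homology ball, is to invoke Lisca's work \cite{Lisca-LensFillings}: up to the natural conjugation symmetry, $\bar{\xi}_{\mathrm{st}}$ is the only tight contact structure on $L(p^2,pq-1)$ bounding a rational homology ball, so $\xi$ is forced to be contactomorphic to $\bar{\xi}_{\mathrm{st}}$. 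Either way this establishes the first assertion of the theorem.

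For the ``in particular'' statement, recall that $B_{p,q}$ is known to be a symplectic (indeed Stein) rational homology ball filling of $(L(p^2,pq-1),\bar{\xi}_{\mathrm{st}})$, by \cite{Lekili-Bpq} together with \cite{Lisca-LensFillings}, and that Lisca's classification of the symplectic fillings of $(L(p^2,pq-1),\bar{\xi}_{\mathrm{st}})$ shows every minimal such filling is diffeomorphic to $B_{p,q}$ (no blow-ups can occur, since $b_2=0$). Hence if $\partial A_{m,n}\approx\partial B_{p,q}$ — which forces $\partial A_{m,n}\approx L(p^2,pq-1)$ with the same parameters — then $A_{m,n}$ and $B_{p,q}$ are two rational homology ball fillings of one and the same contact lens space, hence diffeomorphic; the reverse implication is immediate. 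This also yields the promised complete answer to Problem 1.9, since a homotopy equivalence rel boundary in particular identifies the boundary $3$-manifolds.

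I expect the main obstacle to be the identification of $\xi$ with $\bar{\xi}_{\mathrm{st}}$: the lens space $L(p^2,pq-1)$ carries many tight contact structures, a bare first Chern class computation need not separate them, and one must track orientations carefully — equivalently, distinguish $\bar{\xi}_{\mathrm{st}}$ from its conjugate. The secondary technical point is pinning down a Legendrian front for $\Lambda_{m,n}$ with Thurston--Bennequin framing exactly $mn+1$, since Gompf's condition leaves no room to correct it by stabilization.
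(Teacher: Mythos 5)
Your overall architecture matches the paper's (Gompf's criterion to build $\widetilde{J}_{m,n}$, then Lisca's Theorem \ref{Lens Space Symplectice Filling Classification} with $b_2=0$ to force $A_{m,n}\approx B_{p,q}$), but the step you call the main obstacle is in fact left unproved, and your ``cleaner route'' does not close it. Lisca's theorem, as cited, classifies the symplectic fillings of $(L(p^2,pq-1),\bar{\xi}_{\mathrm{st}})$ \emph{only}; it does not assert that $\bar{\xi}_{\mathrm{st}}$ (even up to conjugation) is the unique tight contact structure on $L(p^2,pq-1)$ admitting a rational homology ball filling, so it cannot ``force'' $\xi_{\widetilde{J}_{m,n}}$ to be standard. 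That uniqueness claim is exactly what would need proof, and it is not available from \cite{Lisca-LensFillings} or anything else you invoke. Your alternative route (compute $c_1$ from the rotation number and appeal to Honda) is the paper's actual strategy, but as stated it is incomplete: a first Chern class alone need not pin down the homotopy class, and, more importantly, comparing invariants requires a \emph{specific} identification of $\partial A_{m,n}$ with $L(p^2,pq-1)\approx\partial B_{p,q}$, since self-diffeomorphisms of lens spaces act nontrivially on $H^2$ and on spin structures. The paper spends Sections \ref{Section: Boundary Diffeomorphisms}--\ref{Section: Algebraic Details} constructing the explicit boundary diffeomorphism $f$ (Theorem \ref{Theorem: Boundary Diffeomorphism}), tracing $H_1$-generators and characteristic sublinks through it, and computing Gompf's $\Gamma$ invariant on both sides (Propositions \ref{Proposition: c_1 coming from B_p,q} and \ref{Proposition: c_1 coming from A_m,n}) together with $d_3=-\tfrac12$, before Theorem \ref{Theorem: Gompf's Homotopy Criteria} and Honda's result yield the contactomorphism (Corollary \ref{xi_J is contactomorphic to xi_J_1}). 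None of that bookkeeping appears in your plan, and without it the identification of $\xi_{\widetilde{J}_{m,n}}$ with $\bar{\xi}_{\mathrm{st}}$ is an assertion, not a proof.

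The first step also needs more than you give it. Producing a Legendrian representative of the attaching circle with the \emph{exact} maximal Thurston--Bennequin value required by Gompf's criterion is the content of Proposition \ref{Proposition: A Stein structure on A_m,n}, proved there by an explicit sequence of handle slides over the 1-handle and a front computation; the Legendrian realization principle on a convex punctured torus gives you a Legendrian whose twisting is dictated by the dividing set, and stabilization only lowers $\tb$, so ``granting'' a front with the right $\tb$ begs the question. Be careful, too, with framing conventions when the attaching circle runs over the 1-handle in standard form: the paper's front has $\tb(K)=mn-2(m+n)+1$ in its convention, not $mn+1$, and the relation between the diagram framing $mn$ and the Stein condition must be checked in that convention rather than the naive $\mathbb{R}^3$ formula.
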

		\noindent The proof of Theorem \ref{Each Amn is Stein} follows by first explicitly 
		writing down a Stein structure on $A_{m,n}$ using Eliashberg and Gompf's
		\cite{Gompf-SteinHandles}
		characterization of handle decompositions of Stein domains.  Then, verifying
		that the homotopy invariants of the induced contact structures on the boundary
		agree with those of $(L(p^2,pq-1),\bar{\xi}_{\mbox{st}})$, showing that
		the two structures are homotopic as 2-plane fields.  Work of Honda's
		\cite{Honda-ClassificationI} shows that this is sufficient to conclude
		that these two contact structures are contactomorphic.  
		Lisca's classification \cite{Lisca-LensFillings} of the diffeomorphism types of
		symplectic fillings of $(L(p^2,pq-1),\bar{\xi}_{\mbox{st}})$ then 
		gives that $A_{m,n}\approx B_{p,q}$.		
		In order to successfully compare the aforementioned
		homotopy invariants, we construct boundary 
		diffeomorphisms.  These boundary diffeomorphisms
		can be extended to explicit diffeomorphisms between $B_{p,q}$ and $A_{m,n}$
		through 	the carving process introduced in \cite{Akbulut-Carving}; in fact,
		we have:
		\begin{thm}\label{Theorem: Boundary Diffeomorphism}
			Let $(m,n)=A(p-q,q)$ for some $p>q>0$ relatively prime.  Then
			there exists a diffeomorphism $f:\partial B_{p,q} \to \partial A_{m,n}$
			such that $f$ carries the belt sphere, $\mu_1$, of the single 2-handle
			in $B_{p,q}$ to an unknot in $\partial A_{m,n}$ 
			(see Figure \ref{Figure: Boundary Diff}).  
			Moreover, carving $A_{m,n}$ along $f(\mu_1)$ gives $S^1\times B^3$.
			\begin{figure}[!ht]
					\centering
						\includegraphics[scale=.6]{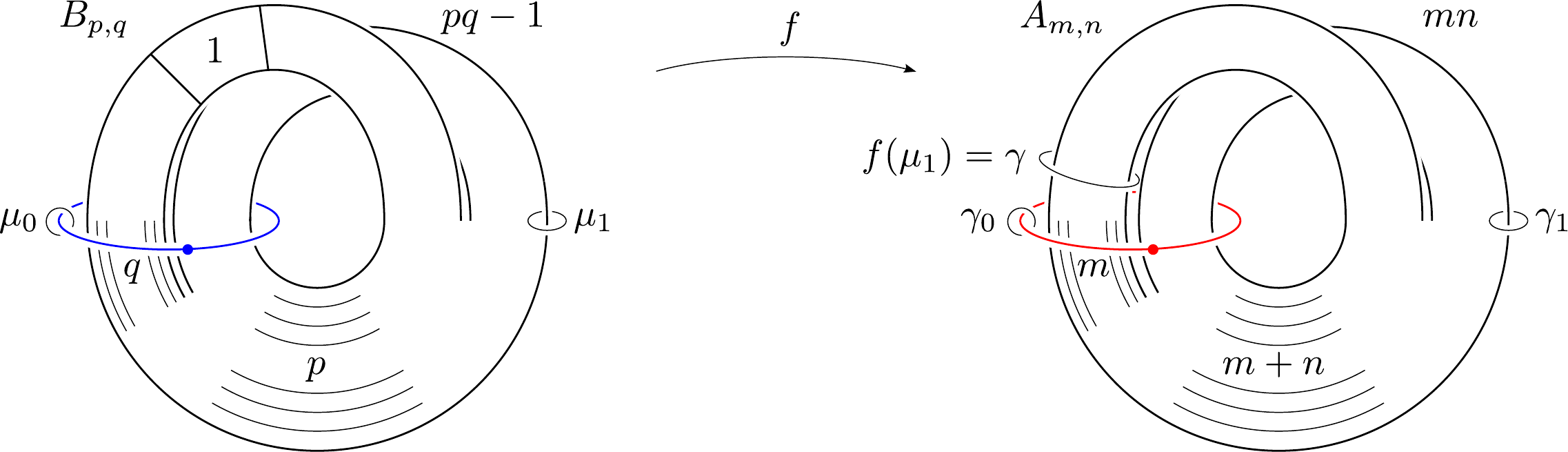}
					\caption{\small The spaces $B_{p,q}$ and $A_{m,n}$.}
						\label{Figure: Boundary Diff}
				\end{figure}				
			\end{thm}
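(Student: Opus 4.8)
The plan is to prove the theorem at the level of surgery diagrams of the two boundary lens spaces, rather than trying to connect the $4$--manifolds directly: $f$ will be an explicit composite of Kirby moves on those diagrams, and the carving statement will be read off afterwards. First I would take the handle diagram of $B_{p,q}$ on the left of Figure~\ref{Figure: Boundary Diff}: a $1$--handle, drawn as a dotted circle $C$, together with a $2$--handle attached along a knot $K_{p,q}$ with framing $pq-1$, whose belt sphere $\mu_1$ is the evident meridian of $K_{p,q}$. Replacing the dotted circle by a $0$--framed unknot presents $\partial B_{p,q}$ as integral surgery on a two--component link; a sequence of handle slides, Rolfsen twists and slam--dunks then unwinds the $p$ strands of $K_{p,q}$ that thread $C$ together with their $q/p$ twisting, terminating at a linear chain surgery description of $L(p^2,pq-1)$ whose framings are read off from the continued fraction expansion of $-p^2/(pq-1)$. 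Because $\mu_1$ meets the surgery locus in a single point at every stage, one can track its image: each twist simply bands $\mu_1$ into a parallel copy of the twisted component, so that $\mu_1$ survives as an unknotted circle throughout.

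Next I would carry out the analogous reduction for $A_{m,n}$ on the right of Figure~\ref{Figure: Boundary Diff}. Its $1$--handle gives $S^1\times B^3$, whose boundary $S^1\times S^2$ contains the once--punctured torus in standard position with its two bands visible; the attaching circle of the $2$--handle --- running $m$ and $n$ times over those bands, with framing $mn$ --- is then an explicit knot. Converting this $1$--handle to a $0$--framed meridian and simplifying as before yields once more a linear chain surgery description of the boundary. The decisive input is Lemma~\ref{Lemma: Defining A(p-q,q)}: the identity $A(p-q,q)=(m,n)$ says precisely that the continued fraction produced from the pair $(m,n)$ coincides with the one governing $L(p^2,pq-1)$ obtained above. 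Hence the two linear chains agree, and $f\colon\partial B_{p,q}\to\partial A_{m,n}$ is the composite of the first reduction with the inverse of the second. By the tracking in the previous paragraph, $f(\mu_1)$ is an unknot in the resulting diagram of $\partial A_{m,n}$.

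For the carving assertion I would run the reduction of $A_{m,n}$ backwards while retaining the dotted--circle description of its $1$--handle, so that $f(\mu_1)$ appears in the handle diagram of $A_{m,n}$ as an unknotted circle; since it bounds an obvious disk in $\partial A_{m,n}$, push that disk into the interior. Carving along it (in the sense of \cite{Akbulut-Carving}) converts the unknot into a second dotted circle, giving a handle picture of the carved manifold with two $1$--handles and the single $2$--handle of $A_{m,n}$. One then checks, from the explicit position of $f(\mu_1)$, that the disk it bounds meets the attaching circle of that $2$--handle exactly once; the $2$--handle therefore geometrically cancels one of the two $1$--handles, and the carved manifold is $S^1\times B^3$.

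The hard part will be the bookkeeping. Both simplifications --- unwinding the $q/p$--twisted $p$--strand cable of $B_{p,q}$, and straightening the $(m,n)$--curve on the once--punctured torus sitting in $S^1\times S^2$ --- are genuine Kirby--calculus computations, and their real content is that they must terminate at the \emph{same} linear chain, which is exactly what Yamada's function $A$ encodes. Carrying $\mu_1$ faithfully through every Rolfsen twist and handle slide, and in the last step verifying its geometric duality with the $A_{m,n}$ $2$--handle, is the delicate point; the remaining steps are standard.
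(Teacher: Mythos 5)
Your construction of $f$ itself follows essentially the same route as the paper: both boundaries are reduced by Kirby moves (implementing the Euclidean algorithm on $(p,q)$, resp.\ $(m,n)$) to a common linear chain, the identification of the two chains is exactly what Lemma \ref{Lemma: Defining A(p-q,q)} supplies, and $\mu_1$ is traced through the moves so that $f(\mu_1)$ is seen to be an unknot. That part of the proposal is fine in outline, modulo the bookkeeping you yourself flag.

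The gap is in the carving step. You assert that, after carving, ``the disk it bounds meets the attaching circle of that $2$--handle exactly once,'' and you cancel immediately. That is false in general: $f(\mu_1)=\gamma$ is not a meridian (belt sphere) of the $A_{m,n}$ $2$--handle --- if it were, carving would trivially delete the $2$--handle and there would be nothing to prove --- but rather a curve whose carving disk the attaching circle pierces many times (on the order of $m$ or $n$), while it runs over the original $1$--handle $m+n$ times. After carving you therefore have two dotted circles and one $2$--handle with geometric run-over numbers both greater than one, and no geometric cancellation is available yet. The correct argument, as in the paper, is to repeatedly slide one $1$--handle over the other: each slide replaces the pair of multiplicities by a Euclidean-algorithm step, and only because $\gcd(m,n)=1$ does this terminate with the $2$--handle running once over one $1$--handle and zero times over the other, at which point you cancel and are left with $S^1\times B^3$. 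Note that your version never invokes coprimality of $m$ and $n$ at this stage, which is a signal that the step as written cannot be right: for non-coprime multiplicities the analogous carving would leave extra torsion in $H_1$ and could not yield $S^1\times B^3$. So the claim of geometric intersection number one must be replaced by the sliding argument (or by an explicit identification of $\gamma$ with the belt sphere, which the diagrams do not give).
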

		\begin{cor}\label{f extends}
			$f$ extends to a diffeomorphism $\tilde{f}:B_{p,q}\to A_{m,n}$.
		\end{cor}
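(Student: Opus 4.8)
The plan is to promote the boundary diffeomorphism $f$ of Theorem \ref{Theorem: Boundary Diffeomorphism} to a diffeomorphism of the 4-manifolds by exploiting the carving machinery of \cite{Akbulut-Carving}. Recall that $B_{p,q}$ is built from $B^4$ by attaching one $1$-handle and one $2$-handle; dually, $B_{p,q}$ is obtained from the $1$-handlebody $S^1\times B^3$ by carving out (i.e.\ removing a neighborhood of) a properly embedded disk whose boundary is exactly the belt sphere $\mu_1$ of the $2$-handle. That is, $B_{p,q}\approx (S^1\times B^3)\setminus \nu(D_{\mu_1})$, where $D_{\mu_1}$ is the core of the dual $2$-handle pushed into $S^1\times B^3$. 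This is the content of Akbulut's observation that attaching a $2$-handle and carving a disk are inverse operations at the level of the interior, with the roles of attaching circle and belt sphere exchanged.

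First I would invoke Theorem \ref{Theorem: Boundary Diffeomorphism}: $f\colon \partial B_{p,q}\to\partial A_{m,n}$ carries $\mu_1$ to an unknot $K\subset \partial A_{m,n}$, and carving $A_{m,n}$ along $K$ yields $S^1\times B^3$. So we have two descriptions of $S^1\times B^3$ together with a distinguished properly embedded disk in each: in $B_{p,q}$ the carving disk $D_{\mu_1}$ (bounding $\mu_1$), and in $A_{m,n}$ the carving disk $D_K$ (bounding $K=f(\mu_1)$). Next I would note that $f|_{\partial B_{p,q}}$ restricted to the complement of $\nu(\mu_1)$ is a diffeomorphism $\partial\big(B_{p,q}\setminus\nu(D_{\mu_1})\big)\to \partial\big(A_{m,n}\setminus\nu(D_K)\big)$, i.e.\ a self-diffeomorphism of $\partial(S^1\times B^3)=S^1\times S^2$ after the carvings are performed. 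The key step is then to show that this boundary diffeomorphism of $S^1\times S^2$ extends over $S^1\times B^3$: since $f$ was constructed explicitly (via handle slides/isotopies realizing the boundary identification), I would track it through the carving to see that it is, up to isotopy, the restriction of a diffeomorphism of $S^1\times B^3$ — using that $\pi_0\mathrm{Diff}(S^1\times S^2)$ is generated by diffeomorphisms that manifestly extend (the $\mathbb{Z}/2$ reflections, the mapping-class-group generators, and the loop of rotations), and that any such extension can be chosen to match $f$ on $\partial\nu(\mu_1)$, i.e.\ on the framed belt circle. Once the diffeomorphism $g\colon S^1\times B^3\to S^1\times B^3$ is in hand with $g(D_{\mu_1})=D_K$ and $g|_{\partial}$ agreeing with $f$ on the relevant pieces, reattaching the carved-out $2$-handle neighborhoods on both sides — which are standard $D^2\times D^2$'s glued along framings that $g$ respects — produces the desired $\tilde f\colon B_{p,q}\to A_{m,n}$ extending $f$.

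The main obstacle I anticipate is the extension step: a priori not every self-diffeomorphism of $S^1\times S^2$ extends over $S^1\times B^3$, so one must genuinely use the explicit nature of $f$ from the proof of Theorem \ref{Theorem: Boundary Diffeomorphism} rather than an abstract existence statement. Concretely, I would check that the sequence of handle slides and isotopies used to build $f$ never involves moves that are nontrivial in $\pi_0\mathrm{Diff}(S^1\times S^2)/\big(\text{those extending over }S^1\times B^3\big)$, or alternatively absorb any such discrepancy into a choice of extension (the group of obstructions being small and explicitly understood). A cleaner route, if the explicit picture cooperates, is to observe directly from Figure \ref{Figure: Boundary Diff} and the carving description that $f$ is realized by an ambient isotopy plus handle slides in $S^1\times B^3$ that visibly extend, bypassing the mapping class group discussion entirely; I would present this as the primary argument and relegate the general-position remark to a footnote.
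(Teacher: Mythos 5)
Your strategy is the same as the paper's: carve $B_{p,q}$ along the cocore of its 2-handle and $A_{m,n}$ along the disk bounded by $f(\mu_1)$, reduce to a self-diffeomorphism of $S^1\times S^2$, extend it over $S^1\times B^3$, and reglue the carved $D^2\times D^2$'s. However, two points need repair. First, the identification ``$B_{p,q}\approx (S^1\times B^3)\setminus\nu(D_{\mu_1})$'' is backwards: $B_{p,q}$ \emph{contains} $S^1\times B^3$ as its 0- and 1-handle part, and carving $B_{p,q}$ along the cocore returns $S^1\times B^3$; a manifold of the form $(S^1\times B^3)\setminus\nu(D)$ has fundamental group surjecting onto $\Z$, whereas $\pi_1(B_{p,q})\cong\Z/p$. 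Your later use of the picture (two copies of $S^1\times B^3$, each with a remembered properly embedded disk, then reglue the neighborhoods) is the correct one, so this is a local slip, but as written the statement is false.

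Second, and more substantively, the step where $f$ ``induces a diffeomorphism after the carvings are performed'' (equivalently, where the $D^2\times D^2$'s are reglued ``along framings that $g$ respects'') requires knowing that $f$ carries the 0-framing of $\mu_1$ determined by the cocore to the framing of $f(\mu_1)$ determined by the carved disk in $A_{m,n}$. Since $\mu_1$ is a torsion class in the lens space, this is not automatic and cannot be absorbed into a later choice of extension $g$: without it the induced map $f_0$ on the surgered $S^1\times S^2$'s is not even defined, and regluing with the wrong framing need not reproduce $A_{m,n}$. This verification is exactly the content of Remarks \ref{Remark: B_p,q 0-framing preserved} and \ref{Remark: A_m,n 0-framing preserved}, which the paper's proof cites at precisely this point; your proposal asserts it without justification. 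Finally, the obstacle you anticipate at the extension step is not one: every self-diffeomorphism of $S^1\times S^2$ extends over $S^1\times B^3$ (the mapping class group is generated by the two factorwise reflections and the rotation loop, all of which visibly extend -- the genus-one case of Laudenbach--Po\'enaru), so your first argument already suffices and no tracking of $f$ through the moves is needed; the paper instead pins the extension down by checking, via Proposition \ref{Proposition: Induced Spin Structures}, that $f_0$ does not interchange the spin structures of $S^1\times S^2$.
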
		
		
		In \cite{FintushelStern-RationalBlowdown}, Fintushel and Stern
		define a smooth operation, the rational blow-down, on 4-manifolds containing  
		certain configurations of spheres by removing a neighborhood
		of those spheres and replacing them by the rational 
		ball $B_{p,p-1}$.  In \cite{Park-GeneralizedRationalBlowdowns}, 
		Park generalized the operation to a larger set of configurations 
		at the expense of having to glue in $B_{p,q}$ for $q$ other 
		than $p-1$. In the presence of a symplectic structure (and a symplectic
		configuration of spheres), 
		both operations can be performed symplectically
		\cite{Symington-SymplecticRBD,Symington-GeneralSymplecticRBD}.  
		Moreover,
		under mild assumptions (see \cite{FintushelStern-RationalBlowdown}, 
		\cite{Park-GeneralizedRationalBlowdowns} for details), nontrivial solutions to 
		the Seiberg-Witten equations on the original 4-manifold induce 
		nontrivial solutions on the surgered manifold.  
		
		Therefore, having well understood handle decompositions for $B_{p,q}$
		allows one to construct explicit examples of rationally blown-down 
		4-manifolds.  For instance, this has been used to construct an exotic
		$\C P^2 \#6 \overline{\C P^2}$ in \cite{StipsiczSzabo-ExoticCP2connect6CP2bar}.  
		Corollary \ref{f extends} and Theorem 
		\ref{Each Amn is Stein} are then useful, since either the decomposition
		$B_{p,q}$ or $A_{m,n}$ can conceivably be used interchangeably.

		\subsection{Conventions and Assumptions}
		{
		Unless specifically stated to the contrary, throughout the paper, 
		we assume $p-q > q \geq 1$, $n>m\geq 1$, 
		and that both pairs are relatively prime.  As 
		$B_{p,q} \approx B_{p,p-q}$ and $A_{m,n}\approx A_{n,m}$,
		this assumption doesn't represent a restriction.
		We adopt the standard convention that $L(p,q)$ is the result of $-p/q$-surgery
		on the unknot in $S^3$.  It is well known that $L(p,q)$ is also given
		as the boundary of a linear plumbing of $D^2$-bundles over $S^2$ with
		Euler classes chosen according to the continued fraction associated 
		to $-p/q$:
		\[
			[c_1,\ldots, c_n] 
				\doteq 
				c_1 - \frac{1}{c_2
							-\displaystyle \frac{1}{\ddots 
									- \displaystyle \frac{1}{c_n}}}
				= -\frac{p}{q} 
		\]
		where, $c_i$ are uniquely determined provided $c_i\leq-2$.  We will 
		often forgo the uniqueness of the $c_i$'s in favor of shorter 
		continued fraction expansions and thus smaller bounding 4-manifolds.  
		
		The continued fraction associated to $-p^2/(pq-1)$
		involves the Euclidean algorithm (see \cite{CassonHarer-RationalBalls}
		as well as \cite{Yamada-Amn}).  Therefore, we use the Euclidean algorithm to 
		define sequences of remainders and divisors of $p$ and $q$ as follows:
		\begin{definition}\label{Definition: Euclidean Sequences}
			For $p>q\geq1$, relatively prime, let $\{r_i\}_{i=-1}^{\ell+2}$ and 
			$\{s_i\}_{i=0}^{\ell+1}$ 
			be defined recursively by $r_{-1}\doteq p$, $r_0\doteq q$ and
			\[
				r_{i+1} = r_{i-1} \mod r_i, \hspace{.5in} r_{i-1}=r_is_i+r_{i+1}.
			\]
			Let $\ell$ be the last index where $r_\ell>1$ so that $r_{\ell+1}=1$
			and $r_{\ell+2} \doteq 0$.
		\end{definition}
		For bookkeeping purposes, we'll differentiate between the 
		above sequences for $p$ and $q$ and
		the analogously defined sequences $\{\rho_i\}_{i=-1}^{\ell+2}$ and 
		$\{\sigma_i\}_{i=0}^{\ell+1}$ associated to $n>m\geq 1$.  
		Furthermore, provided that $p-q>q$, $\ell$ agrees between the 
		two sequences when $A(p-q,q)=(m,n)$ or $(n,m)$ 
		(see Remark \ref{Remark: A_m,n 0-framing preserved}
		and Lemma \ref{Lemma: Defining A(p-q,q)}).
		}
		
		\subsection{Organization}
		{
			The paper is organized as follows: In Section 
			\ref{Section: Stein Structures on Amn}, we construct
			Stein structures on each $A_{m,n}$ using
			Eliashberg and Gompf's characterization of handle decompositions
			of Stein domains.  In Section \ref{Section: Boundary Diffeomorphisms},
			we construct explicit boundary diffeomoprhisms from $B_{p,q}$
			and $A_{m,n}$ to their lens space boundaries - proving Theorem 
			\ref{Theorem: Boundary Diffeomorphism}. 
			In Section \ref{Section: Homotopy Invariants}, we prove
			Theorem \ref{Each Amn is Stein} by using those
			boundary diffeomorphisms to determine
			which contact structures are 
			induced by the Stein structures of
			Section \ref{Section: Stein Structures on Amn}.
			For clarity we relegate much of the required algebra to
			Section \ref{Section: Algebraic Details}. 
		}
		
	}

\section{Stein Structures on $A_{m,n}$}\label{Section: Stein Structures on Amn}
{
	In this section, we show that $A_{m,n}$ admits a Stein 
	structure.  To accomplish this, we use the handle characterization
	of Stein surfaces given in \cite{Gompf-SteinHandles}.  
	The reader can also consult \cite{Gompf-and-Stipsicz} as well
	as \cite{Ozbagci-Stipsicz} for thoughtful treatments of the subject.
	Such a Stein structure induces a (tight) contact structure on
	$\partial A_{m,n}$.  Tight contact structures on lens spaces
	are well understood; Honda completely classifies them in
	\cite{Honda-ClassificationI}.  Moreover, in \cite{Lisca-LensFillings},
	Lisca classifies the diffeomorphism types of symplectic
	fillings of $(L(p,q),\bar{\xi}_{st})$ where $\bar{\xi}_{st}$ is
	the contact structure $L(p,q)$ inherits from the unique tight
	contact structure on $S^3$ via the cyclic group action.
	In particular, Lisca defines 4-manifolds $W_{p,q}({\bf n})$,
	such that
	\begin{thm}[\cite{Lisca-LensFillings}, Theorem 1.1]
		\label{Lens Space Symplectice Filling Classification}
		Let $p>q\geq 1$ be relatively prime integers.  Then
		each symplectic filling $(W,\omega)$ of $(L(p,q),\bar{\xi}_{\mbox{st}})$ 
		is orientation preserving diffeomorphic to a smooth blowup 
		of $W_{p,q}({\bf n})$ for some ${\bf n}\in {\bf Z}_{p,q}$.
		Moreover, if $b_2(W)=0$, then $W$ is unique.    
	\end{thm}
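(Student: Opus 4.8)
The plan is to pass from a symplectic filling to a \emph{closed} symplectic $4$-manifold, identify that manifold as a blow-up of $\C P^2$, and then recover the filling as the complement of a standard configuration of symplectic spheres whose possibilities can be enumerated. First (capping off): the contact manifold $(L(p,q),\bar\xi_{\mathrm{st}})$ is the convex boundary of the canonical negative-definite linear plumbing $X(p,q)$ of disk bundles over $S^2$ read off from the continued fraction of $-p/q$, and dually it admits a concave symplectic cap built from a chain of embedded symplectic spheres $C_1,\dots,C_k$ meeting transversely and positively. Gluing a given filling $(W,\omega)$ to such a cap along $L(p,q)$ yields a closed symplectic $4$-manifold $\widehat W$ containing $C_1\cup\dots\cup C_k$. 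A preliminary step, using that minimal symplectic fillings of $(L(p,q),\bar\xi_{\mathrm{st}})$ are negative definite, reduces to the case $b_2^+(W)=0$; one then argues that $\widehat W$ is a rational surface, since it contains embedded symplectic spheres, so McDuff's structure theorem --- through Taubes--Seiberg--Witten --- identifies its minimal model as $\C P^2$ or a ruled surface, and the presence of the chain $C_i$ together with the constraint on $b_2^+$ rules out the irrational ruled case.

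Next, write $\widehat W\approx \C P^2\#N\overline{\C P^2}$ after blowing down exceptional spheres disjoint from the configuration, arranging that every remaining exceptional class meets it. In $H_2$ with the standard basis $h,e_1,\dots,e_N$, the classes $[C_i]$ satisfy: the self-intersections prescribed by the plumbing; the intersection pattern $[C_i]\cdot[C_{i+1}]=1$ and $[C_i]\cdot[C_j]=0$ for $|i-j|\ge 2$; and the adjunction identity $K_{\widehat W}\cdot[C_i]=-2-[C_i]^2$, valid because each $C_i$ is an embedded symplectic sphere.

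Solving this Diophantine system is the combinatorial heart of the argument; it is governed by the continued-fraction recursion behind Definition~\ref{Definition: Euclidean Sequences} and produces an explicit finite list of admissible tuples of homology vectors, which one packages as $\mathbf n\in\mathbf Z_{p,q}$ --- by definition $W_{p,q}(\mathbf n)$ is the complement of an open regular neighborhood of the corresponding configuration in $\C P^2\#N\overline{\C P^2}$. Since a regular neighborhood of a configuration of symplectic spheres in a rational surface is determined up to orientation-preserving diffeomorphism by its homology data --- here one invokes the uniqueness, up to symplectic isotopy, of embedded symplectic spheres in rational surfaces, established with pseudoholomorphic curves (Gromov for lines in $\C P^2$, McDuff's refinements after blow-up) --- the filling $W=\widehat W\setminus\mathrm{int}\,\nu(C_1\cup\dots\cup C_k)$ is orientation-preservingly diffeomorphic to a blow-up of $W_{p,q}(\mathbf n)$, the blow-ups recording the exceptional spheres removed above. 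When $b_2(W)=0$, the rank count forces $N$ to its minimal value, so no blow-up occurs and $W\approx W_{p,q}(\mathbf n)$; for this $N$ the admissible homology data form a single orbit under the orientation-preserving diffeomorphisms of $\C P^2\#N\overline{\C P^2}$ that permute and reflect the $e_i$, whence $W$ is unique.

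The main obstacle is the junction of the first and third steps: proving that the capped-off manifold is honestly a rational surface (which leans on McDuff's classification and on controlling $b_2^+$ of the filling), and then pinning down the symplectic isotopy class of the sphere configuration sharply enough to extract the precise set $\mathbf Z_{p,q}$ rather than merely a homological classification --- the pseudoholomorphic-curve input being the technical core.
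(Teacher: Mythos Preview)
The paper does not prove this statement at all: Theorem~\ref{Lens Space Symplectice Filling Classification} is quoted verbatim from Lisca's paper \cite{Lisca-LensFillings} and is used as a black box, with no proof or sketch given here. There is therefore nothing in the present paper to compare your attempt against.

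That said, your sketch is a faithful outline of Lisca's actual argument in \cite{Lisca-LensFillings}: cap off the filling with a concave neighborhood of a chain of symplectic spheres, invoke McDuff's structure theorem to recognize the result as a blow-up of $\C P^2$, solve the adjunction and intersection constraints on the homology classes of the spheres to produce the finite set $\mathbf{Z}_{p,q}$, and use symplectic isotopy of spheres in rational surfaces to pass from homological data to a diffeomorphism of complements. The uniqueness claim for $b_2(W)=0$ is indeed handled by showing the relevant homology solutions lie in a single orbit under the obvious diffeomorphisms. So your proposal is correct in spirit, but it is a summary of Lisca's proof rather than a comparison point for anything done in this paper.
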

	In light of Lisca's theorem, if we show that not only does
	$A_{m,n}$ admit a Stein structure, but that such a structure
	gives a symplectic filling of $(L(p^2,pq-1),\bar{\xi}_{st})$,
	then we immediately have that $A_{m,n}\approx B_{p,q}$ since
	it is known that $B_{p,q}$ admits a Stein structure
	giving such a filling. Indeed, by sliding the 
	2-handle of $B_{p,q}$ over the 1-handle $q$-times
	one arrives at the Stein domain, $(B_{p,q},J_{p,q})$,
	given in Figure \ref{Figure: (B_p,q,J_p,q} and investigated 
	in \cite{Lekili-Bpq}; there, the authors prove that $J_{p,q}$ fills
	the standard contact structure on $L(p^2,pq-1)$.
	\begin{proposition}\label{Proposition: A Stein structure on A_m,n}
		Each $A_{m,n}$ admits a Stein structure, $\widetilde{J}_{m,n}$, 
		specified by Figure \ref{Figure: (A_m,n,J_1)} where
		we assume $n=m\sigma_0+\rho_1$.
		\begin{figure}[!ht]
				\centering
					\includegraphics[scale=.5]{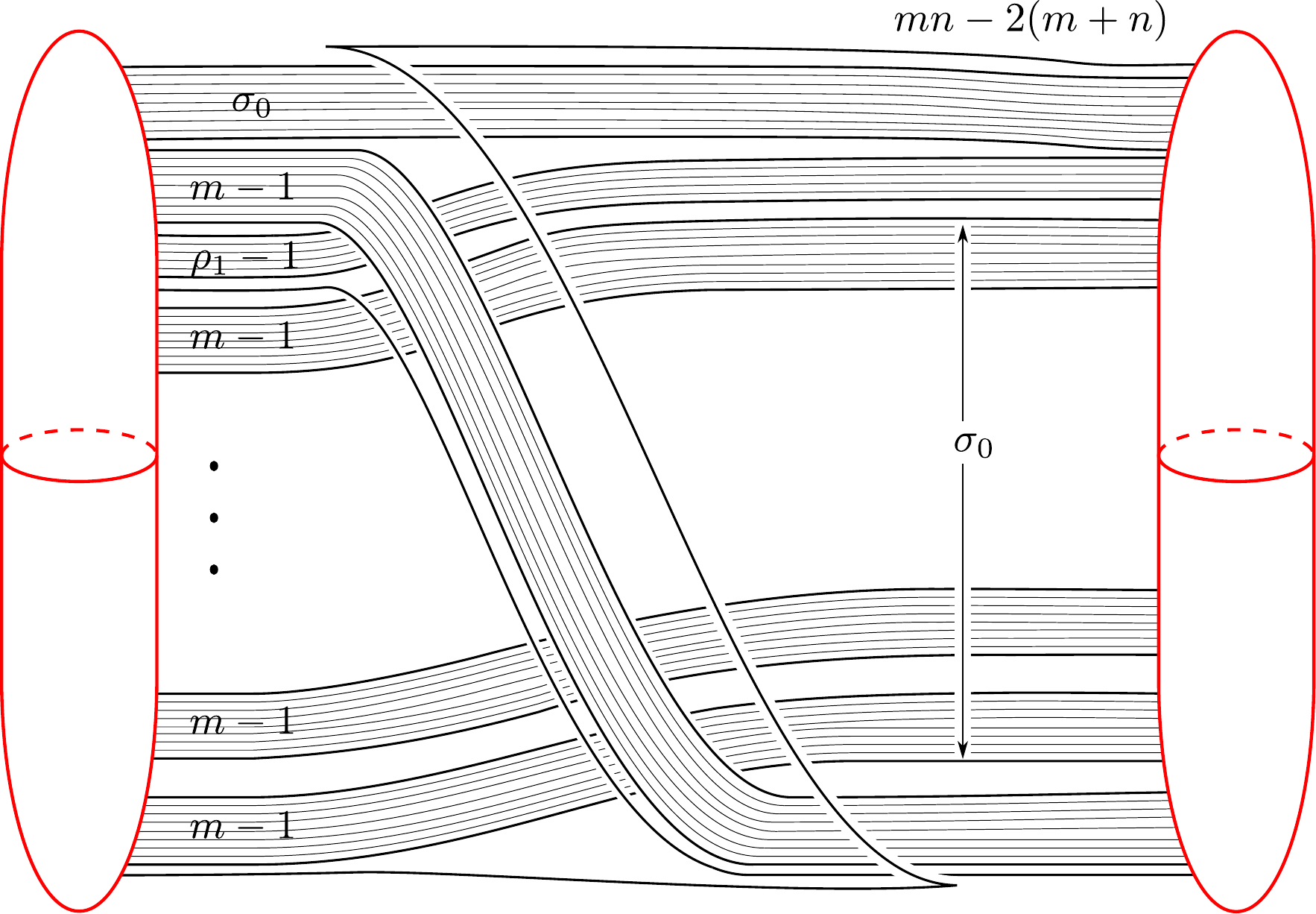}
				\caption{\small $(A_{m,n},\widetilde{J}_{m,n})$}
				\label{Figure: (A_m,n,J_1)}
		\end{figure}
	\end{proposition}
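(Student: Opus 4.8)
The plan is to apply the Eliashberg–Gompf criterion \cite{Gompf-SteinHandles}: a handle decomposition of a 4-manifold built from $B^4$ by attaching $1$-handles and $2$-handles along a Legendrian link $L$ in $\#_k(S^1\times S^2)$ (with its standard tight contact structure) underlies a Stein domain precisely when each $2$-handle is attached with framing one less than the Thurston–Bennequin invariant of the corresponding Legendrian component. Since $A_{m,n}$ has exactly one $1$-handle and one $2$-handle, attached with framing $mn$, the task reduces to producing a Legendrian realization of the attaching curve of the $2$-handle — the simple closed curve on the once-punctured torus in $S^1\times S^2$ that traverses the two torus handles $m$ and $n$ times — whose Thurston–Bennequin number equals $mn+1$. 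So first I would fix the standard Legendrian picture of $S^1\times S^2$: the $1$-handle drawn as a pair of Legendrian-friendly balls, with the contact structure the one for which Legendrian isotopy and the Thurston–Bennequin count behave as in \cite{Gompf-SteinHandles}. The assumption $n=m\sigma_0+\rho_1$ in the statement is exactly the first step of the Euclidean algorithm of Definition \ref{Definition: Euclidean Sequences} applied to $(m,n)$, which tells me that the $(m,n)$-curve on the torle can be presented, after a change of framing/basis on the punctured torus, in a normal form adapted to that division.

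Next I would carry out the explicit construction. I would draw the attaching curve of the $2$-handle as in Figure \ref{Figure: (A_m,n,J_1)}: realize the once-punctured torus in the $S^1\times S^2$ diagram so that the curve wraps $m$ times around one torus-handle and $n = m\sigma_0+\rho_1$ times around the other, then Legendrian-isotope it into a position built out of $\sigma_0$ repeated "blocks" together with a remainder block governed by $\rho_1$. For such a stabilization-free Legendrian front one computes the Thurston–Bennequin invariant as (writhe) $-$ (number of right cusps); the bookkeeping should give $\mathrm{tb} = mn+1$, matching the required framing. Equivalently, I would verify that the $2$-handle framing $mn$ is visibly $\mathrm{tb}-1$ directly from the front diagram, using only that the blackboard framing of the standard Legendrian unknot-with-one-$1$-handle picture is correctly normalized. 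Since the rotation number is not constrained by the Eliashberg–Gompf theorem, no condition on it is needed at this stage (though it will be recorded later, in Section \ref{Section: Homotopy Invariants}, to pin down the contact structure). This yields the Stein structure $\widetilde J_{m,n}$ on $A_{m,n}$.

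The main obstacle I anticipate is the Legendrian realization itself: showing that the $(m,n)$-curve on the punctured torus, for \emph{all} coprime $(m,n)$ with the $n=m\sigma_0+\rho_1$ normalization, admits a front with $\mathrm{tb}$ exactly $mn+1$ and not something smaller. Generic diagrams of a $(m,n)$-curve tend to have extra crossings or cusps that depress $\mathrm{tb}$, so the real content is exhibiting the \emph{optimal} front. I would handle this by an explicit, almost algorithmic drawing tied to the division $n=m\sigma_0+\rho_1$: the $\sigma_0$ parallel strands produce a controlled amount of writhe while the cusp count stays pinned at $m$ (one pair of cusps per strand passing the $1$-handle, say), and I would check the arithmetic $\text{writhe} - m = mn$ so that $\mathrm{tb}=mn+1$. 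In effect the proof is "draw the right picture and count," with Figure \ref{Figure: (A_m,n,J_1)} doing the essential work; the coprimality and the ordering convention $n>m\geq 1$ guarantee the division $n=m\sigma_0+\rho_1$ makes sense with $0\le \rho_1 < m$, so the construction covers every case. Finally, I would remark that the underlying smooth handlebody is visibly $A_{m,n}$ (one $1$-handle, one $2$-handle with framing $mn$ along an $(m,n)$-curve on the punctured torus), so the Stein structure genuinely lives on $A_{m,n}$, completing the proof of Proposition \ref{Proposition: A Stein structure on A_m,n}.
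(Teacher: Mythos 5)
Your high-level strategy is the same as the paper's (apply the Eliashberg--Gompf criterion to an explicit Legendrian front for the $(m,n)$-curve over the single $1$-handle), but there are two concrete problems. First, your reduction ``find a front with $\tb = mn+1$'' is not the right target. For a knot running over a $1$-handle there is no canonical $0$-framing: the coefficient $mn$ in Yamada's definition is measured in the original ball-notation diagram, and that coefficient is \emph{not} preserved when you drag strands of the attaching circle over the $1$-handle to reach a Legendrian standard-form picture. Indeed, for the front of Figure \ref{Figure: (A_m,n,J_1)} the count is
\[
\tb(K) = (m-1)(\rho_1-1)+m\sigma_0(m-1)-(m+n-1)-1 = mn-2(m+n)+1,
\]
not $mn+1$; the $2$-handle is attached with framing $\tb(K)-1 = mn-2(m+n)$ in that diagram's convention, and this \emph{is} the $mn$-framed attachment of $A_{m,n}$ once the convention change coming from the passages over the $1$-handle is accounted for. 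Searching for a front with $\tb=mn+1$ would either fail or produce a different framed attachment, so the framing bookkeeping has to be done relative to a specified diagram, which your proposal never pins down.

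Second, the step you dismiss as ``visibly $A_{m,n}$'' is in fact the entire content of the proof. Showing that the Legendrian picture of Figure \ref{Figure: (A_m,n,J_1)} represents the same framed handle attachment as $A_{m,n}$ requires an explicit sequence of moves: put $A_{m,n}$ in two-ball notation, slide the $2$-handle under the $1$-handle, and then repeatedly drag the ``bad'' strand over the $1$-handle, each drag trading $m-1$ negative crossings for $m-1$ positive ones; this is iterated $\sigma_0$ times, then again in blocks governed by the division $n=m\sigma_0+\rho_1$, with an induction of length $\rho_1-1$ producing the final front. Your proposal gestures at ``draw the right picture and count'' and correctly identifies the optimal-front issue as the obstacle, but offers no mechanism for producing the front or for tracking how the framing coefficient and crossing counts evolve under the drags --- which is exactly where the $2(m+n)$ discrepancy above is generated and resolved. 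Without that inductive diagram manipulation (or some substitute argument), the proof is incomplete.
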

	\begin{proof}
		Notice that there are $(m-1)((\rho_1-1)+\sigma_0(m-1))+\sigma_0(m-1)$ 
		positive crossings, $m+n-1$ negative crossings and one 
		left cusp coming from the Legendrian attaching circle, 
		$K\subset S^1\times S^2$, of the 2-handle in
		Figure \ref{Figure: (A_m,n,J_1)}.  
		Then, the Thurston-Bennequin framing of $K$ is
		\begin{align*}
			\tb(K) 
				&= \mbox{\# of possitive crossings} 
					- \mbox{\# of negative crossings} 
					- \mbox{\# of left cusps}\\
				&= (m-1)(\rho_1-1)+m\sigma_0(m-1)- (m+n-1) -1\\
				&= m(m\sigma_0+\rho_1) -m -m\sigma_0 -\rho_1 -(m+n) + 1\\
				&= mn-2(m+n)+1.
		\end{align*}
		Then, \cite{Gompf-SteinHandles} gives that
		the unique Stein structure on $S^1\times B^3$ 
		extends to $A_{m,n}$ - provided
		that Figure \ref{Figure: (A_m,n,J_1)} specifies $A_{m,n}$. 
		To that end, express $A_{m,n}$ in 2-ball notation and
		slide the 2-handle once under the 1-handle (left side of 
		Figure \ref{Figure: Amn Stein 1}).
		\begin{figure}[!ht]
			\begin{tikzpicture}[xscale=.9]
				\node[inner sep=0pt] at (0,0)
    				{\includegraphics[scale=.42]{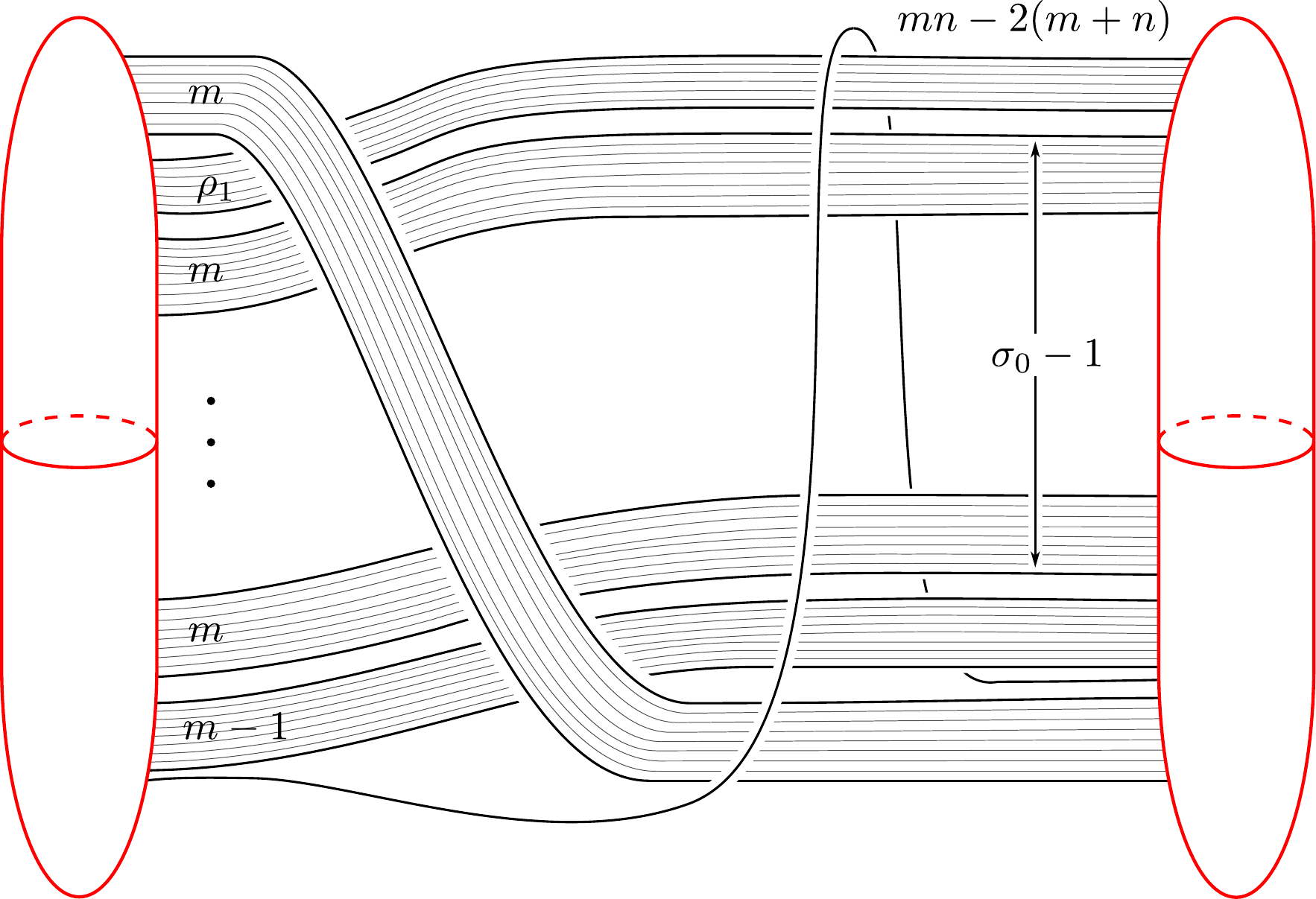}};
    			\node[inner sep=0pt] at (9,0)
					{\includegraphics[scale=.42]{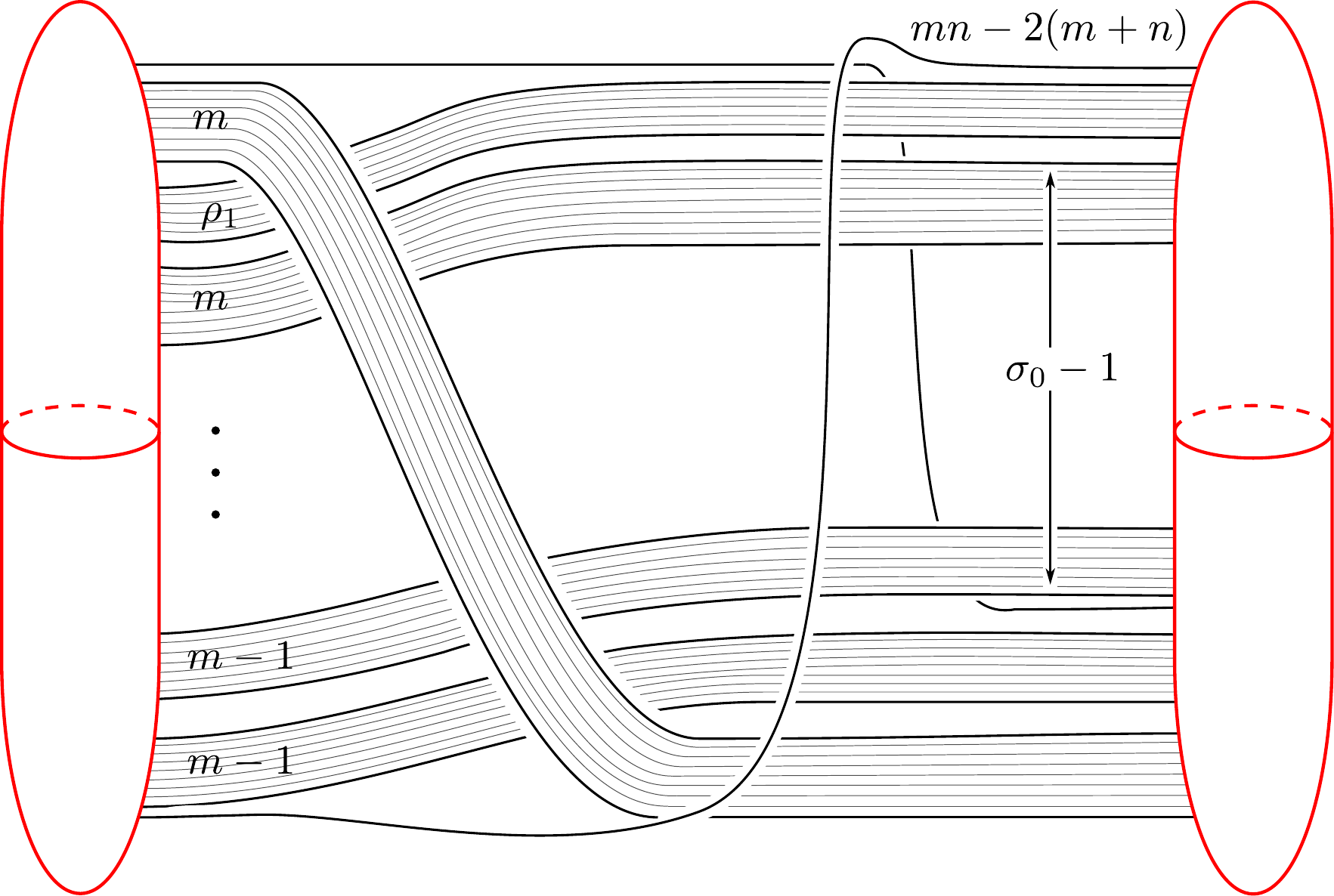}};
    		\end{tikzpicture}
			\caption{\small Sliding the 2-handle of $A_{m,n}$ under the 1-handle; 
						dragging the attaching circle of that
						2-handle around the 1-handle once.}
					\label{Figure: Amn Stein 1}
		\end{figure}	
		We refer to the portion of
		$K$ passing behind the central plane of the two 
		attaching balls of the 1-handle as the ``bad'' strand.
		We now pair off negative crossings in the bad strand 
		with positive crossings in $K$ by ``unraveling'' the 2-handle.  
		To accomplish this, begin by 
		dragging the bad strand over the 1-handle 
		(right side of Figure \ref{Figure: Amn Stein 1}).
		Each time we drag the bad strand over the 1-handle, we 
		unwind a strand off of the lowest remaining band of $m$ strands and
		wind that strand into a parallel band at the top
		 - thereby eliminating $m-1$ negative crossings
		at the expense of $m-1$ positive crossings.  
		Repeating $\sigma_0-1$ more times gives the left side
		of Figure \ref{Figure: Amn Stein 2}.
		\begin{figure}[!ht]
			\begin{tikzpicture}[xscale=.9]
				\node[inner sep=0pt] at (0,0)
    				{\includegraphics[scale=.42]{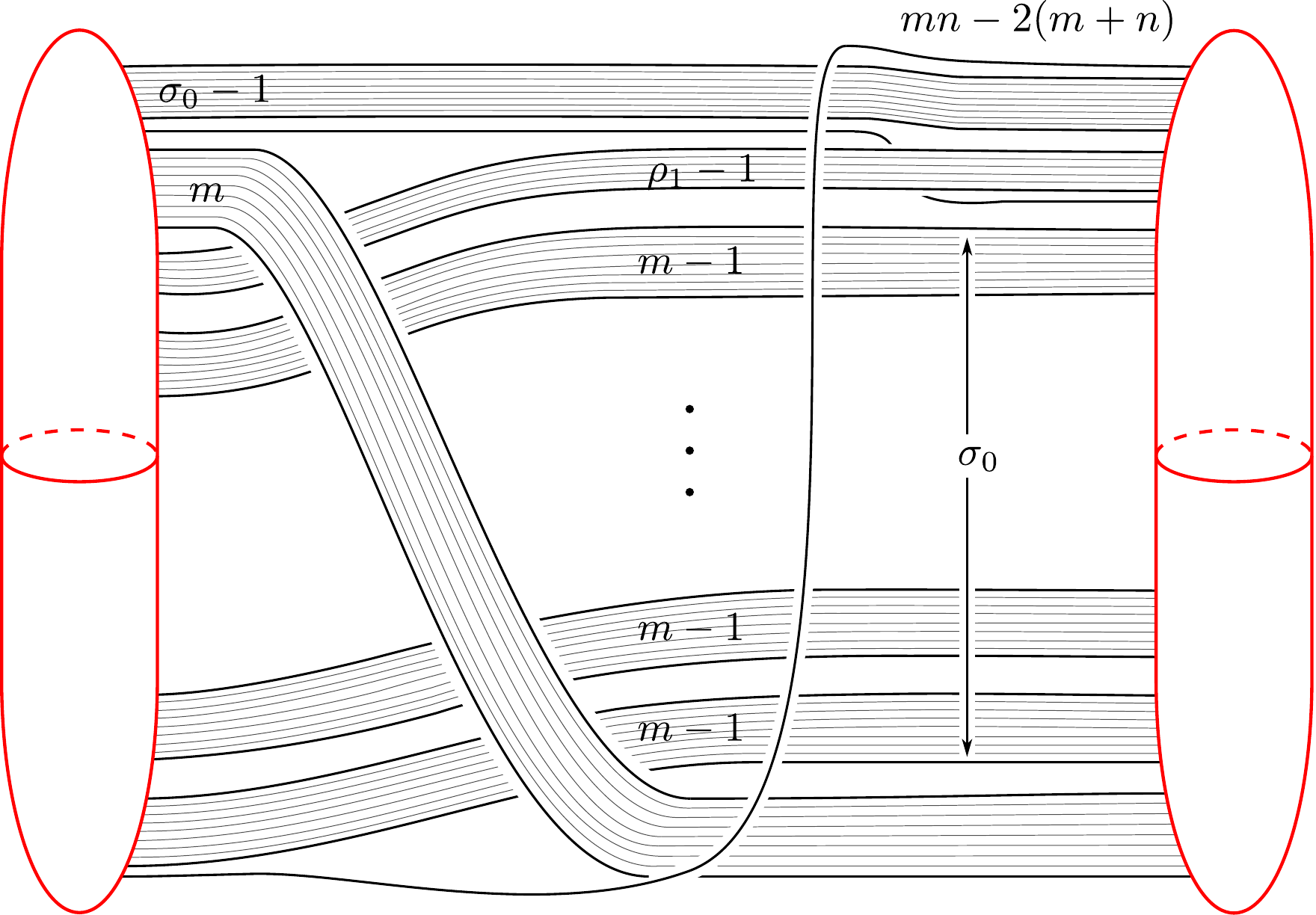}};
    			\node[inner sep=0pt] at (9,0)
					{\includegraphics[scale=.42]{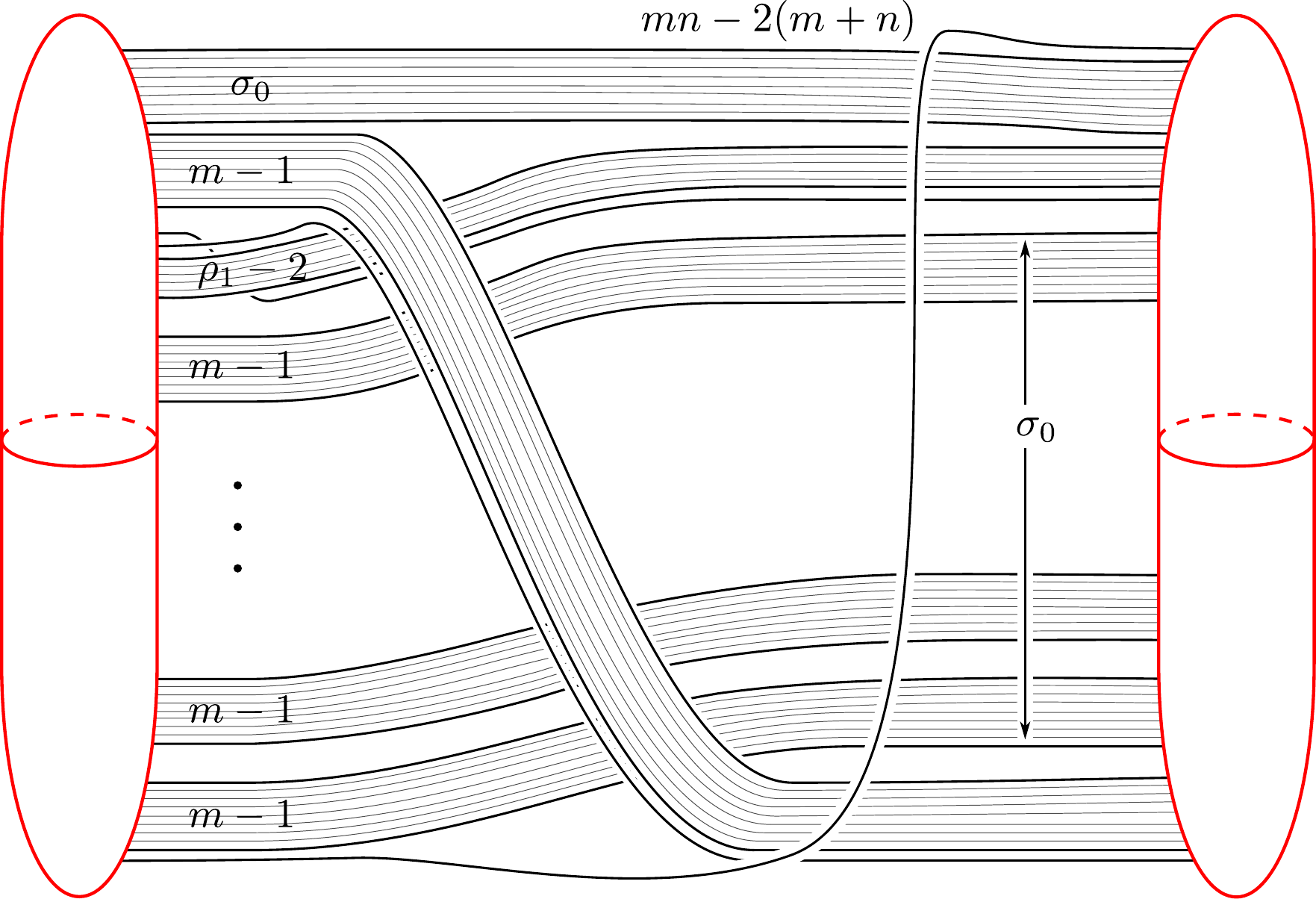}};
    		\end{tikzpicture}
			\caption{\small The result of dragging the attaching circle $\s_0$-times;
						and after $2\sigma_0+2$-times.}
			\label{Figure: Amn Stein 2}
		\end{figure}	
		We again push what remains of the bad strand around the 1-handle - 
		this time, a total of $\sigma_0+2$-times - giving the right side of Figure 
		\ref{Figure: Amn Stein 2}.
		We repeat the process of dragging the negative twist around the 
		1-handle $\sigma_0+2$ times.  Each time, the twist involves one less strand.
		After $k$ such iterations, the braid in the upper right of Figure 
		\ref{Figure: Amn Stein 2} is replaced by that of Figure 
		\ref{Figure: Amn Inductive Braid}.  
		\begin{figure}[!ht]
				\includegraphics[scale=.5]{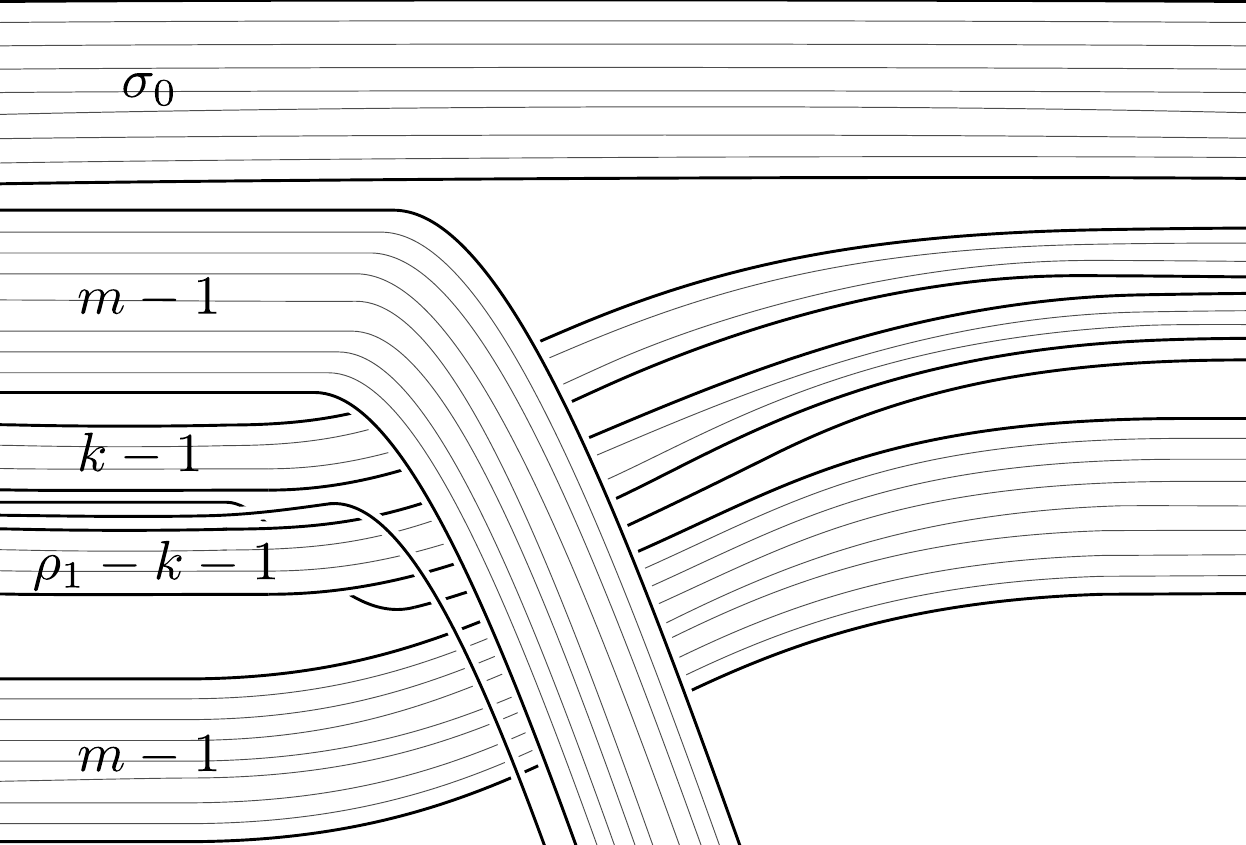}
					\caption{\small The result of dragging the 2-handle of $A_{m,n}$
					around the 1-handle $(k+1)\s_0+2$-times.}
					\label{Figure: Amn Inductive Braid}	
		\end{figure}	
		It's then immediate that $\rho_1-1$ iterations 
		gives Figure \ref{Figure: (A_m,n,J_1)}.
	\end{proof}
	\noindent				
	The fact that $(\partial A_{m,n},\xi_{J_1})$
	is contactomorphic to $(\partial B_{p,q},\xi_{J_{p,q}})$ and thus
	to $(L(p^2,pq-1),\bar{\xi}_{\mbox{st}})$ is
	Corollary \ref{xi_J is contactomorphic to xi_J_1}. 
	Also, it is worth noting that
	$\widetilde{J}_{1,p-1}$ is $J_{p-1,1}$.	
}
\section{Boundary Diffeomorphisms}\label{Section: Boundary Diffeomorphisms}
	{
	In this section, we exhibit explicit diffeomorphisms from $\partial B_{p,q}$
	and $\partial A_{m,n}$ to $L(p^2,pq-1)$.  To accomplish this, we find boundary
	diffeomorphisms to particular linear plumbings associated to $p$ and $q$
	(respectively $m$ and $n$).  These diffeomorphisms are needed to compare
	the resulting homotopy invariants of the contact structures induced by the Stein 
	structures on $B_{p,q}$ of \cite{Lekili-Bpq} 
	as well as those on $A_{m,n}$ coming from Proposition 
	\ref{Proposition: A Stein structure on A_m,n}.
	Along the way, we trace the meridian of the attaching circle of the single 
	2-handle of $B_{p,q}$ - proving 
	Theorem \ref{Theorem: Boundary Diffeomorphism}.
	
	It's worth noting that such diffeomorphisms have been
	known previously.  In \cite{Yamada-Amn}, Yamada produces 
	diffeomorphisms from $\partial A_{m,n}$ to  $L(p^2,pq-1)$
	expressed as the boundary of the unique linear plumbing of
	$D^2$-bundles over $S^2$ with Euler classes each $\leq -2$.  
	To accomplish this, one must carefully keep track of 
	every stage of the Euclidean algorithm applied to $(p-q,q)=1$
	- that is every time $a_i$ is subtracted from $b_i$ or
	$b_i$ from $a_i$ in Yamada's definition of $A(p-q,q)$
	(see Lemma \ref{Lemma: Defining A(p-q,q)}).  We perform a 
	courser bookkeeping of the Euclidean algorithm via Definition
	\ref{Definition: Euclidean Sequences}, which allows for
	an arguably clearer definition - however, we don't arrive at 
	a linear plumbing with Euler classes $\leq -2$.  Yet,
	through a sequence of blow-ups and cancellations, one can 
	easily get to that plumbing if so desired.  
	Furthermore, this definition lends itself to 
	defining the diffeomorphism from $\partial B_{p,q}$
	to $L(p^2,pq-1)$ as well:
	
	\begin{proposition}\label{Proposition: B_p,q Boundary Induction}
		Let $\{r_i\}_{i=-1}^{\ell+2}$ and $\{s_i\}_{i=0}^{\ell+1}$ 
		be as defined in Definition \ref{Definition: Euclidean Sequences}.
		Then for each $i\in\{0,\ldots,\ell+1\}$, 
		$B_{p,q} \stackrel{\partial}{\approx} B_{p,q}^i$
		where $B_{p,q}^i$ is the 4-manifold
		given by Figure \ref{Figure: B_p,q Boundary Induction)}.
		\begin{figure}[!ht]
			\centering
				\includegraphics[scale=.55]{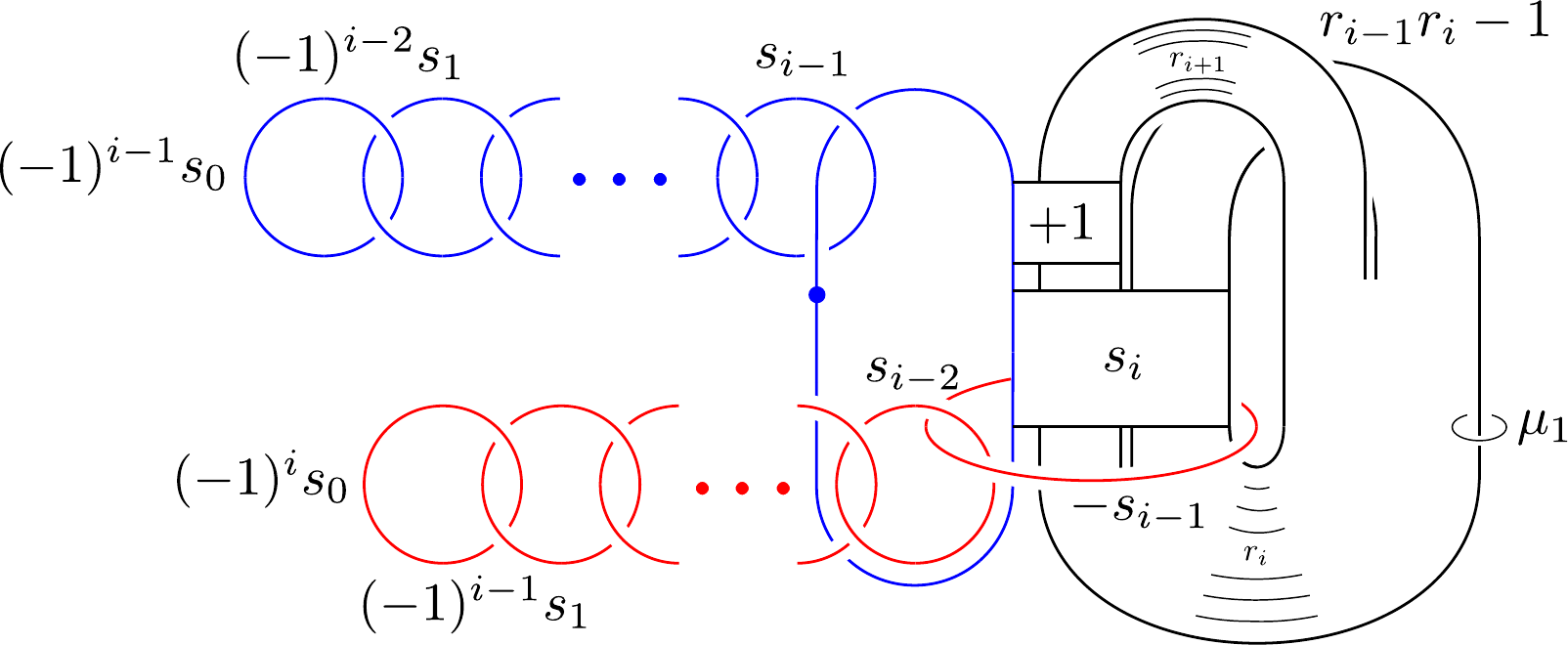}
			\caption{The 4-manifold $B_{p,q}^i$}\label{Figure: B_p,q Boundary Induction)}
		\end{figure}
	\end{proposition}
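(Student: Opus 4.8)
The plan is to prove the statement by induction on $i$, starting from the case $i=0$ where $B_{p,q}^0$ should be (a handle slide away from) the original diagram of $B_{p,q}$, and then showing that each diagram $B_{p,q}^i$ is obtained from $B_{p,q}^{i-1}$ by a sequence of moves that do not change the boundary 3-manifold — handle slides, blow-ups/blow-downs on unknotted $\pm 1$-framed components, and isotopies. Since the indexing is governed by the Euclidean sequences $\{r_i\}$, $\{s_i\}$ of Definition \ref{Definition: Euclidean Sequences}, the inductive step should correspond precisely to performing $s_i$ handle slides that replace the role of $r_{i-1}$ by $r_{i+1} = r_{i-1} - r_i s_i$, exactly mirroring one line of the Euclidean algorithm. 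The terminal case $i = \ell+1$, where $r_{\ell+1}=1$ and $r_{\ell+2}=0$, should then exhibit $B_{p,q}^{\ell+1}$ as manifestly boundary-diffeomorphic to the desired linear plumbing (and hence to $L(p^2,pq-1)$), though for the present proposition we only need the intermediate equivalences.

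Concretely, I would first fix the diagram conventions: write $B_{p,q}$ in $2$-ball (dotted-circle) notation, with the $2$-handle attaching circle $K$ wrapping $p$ times around the $1$-handle with a $q/p$-twist and framing $pq-1$, and then perform the initial handle slide (sliding the $2$-handle over the $1$-handle $q$ times, as already invoked in Section \ref{Section: Stein Structures on Amn} to produce $(B_{p,q}, J_{p,q})$) to reach $B_{p,q}^0$. The bookkeeping is cleanest if $B_{p,q}^i$ is displayed with a chain of $i$ small unknotted linking components carrying the partial continued-fraction data $[\,\ldots, s_{i-1}, \ast\,]$ together with a residual ``cabling'' component recording $r_{i-1}$ and $r_i$ strands; this is the content of Figure \ref{Figure: B_p,q Boundary Induction)}. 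The inductive step is then a local move: drag the residual band around the relevant linking circle $s_i$ times, each pass trading $r_i$ strands through that circle and converting $r_{i-1}$ parallel strands into $r_{i+1}$ of them, then absorb the new full twist as an additional chain component. This is the same ``unraveling'' mechanism used in the proof of Proposition \ref{Proposition: A Stein structure on A_m,n}, now run in the boundary rather than rel a Stein structure, so one has more freedom (blow-ups are permitted).

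The main obstacle I anticipate is purely combinatorial rather than conceptual: making the induction hypothesis precise enough that the local move genuinely reproduces the \emph{next} diagram $B_{p,q}^{i+1}$ in the claimed normal form, including correctly tracking framings and the arithmetic $r_{i-1} = r_i s_i + r_{i+1}$ through the handle slides, and checking the boundary cases at both ends ($i=0$ matching the post-slide $B_{p,q}$, and $i=\ell+1$ closing off cleanly when $r_{\ell+2}=0$). Orientation/sign conventions on the twists and on $pq-1$ also require care, since an error there would flip $L(p^2,pq-1)$ to $L(p^2,-(pq-1))$. To keep the exposition honest I would relegate the explicit framing computations and the verification that the pictured moves are valid Kirby moves to Section \ref{Section: Algebraic Details}, and in the main body give the inductive picture together with the one-line Euclidean-algebra identity driving each step. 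Once the proposition is in hand, the symmetric statement for $\partial A_{m,n}$ follows by the parallel argument with $\{\rho_i\}$, $\{\sigma_i\}$ in place of $\{r_i\}$, $\{s_i\}$, and the two endpoints are then compared via the common linear plumbing for $L(p^2,pq-1)$.
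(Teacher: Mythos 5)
Your overall skeleton---induction on $i$, with each step implementing one line $r_{i-1}=s_ir_i+r_{i+1}$ of the Euclidean algorithm of Definition \ref{Definition: Euclidean Sequences} by boundary-preserving Kirby moves---is the same as the paper's. But the proposal stops exactly where the content of the proof lies, and the mechanism you do name would not carry the inductive step. The move you invoke, the ``unraveling'' of Proposition \ref{Proposition: A Stein structure on A_m,n}, is a diagram isotopy (dragging strands of the attaching circle around the 1-handle); it cannot change the framing of $K_1^i$ from $r_{i-1}r_i-1$ to $r_ir_{i+1}-1$, nor can it produce the extra chain component and the \emph{new} 1-handle that distinguish $B_{p,q}^{i+1}$ from $B_{p,q}^i$. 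What the paper actually does is: surger the existing 1-handle (dot to $0$-framed unknot, which preserves the boundary), then introduce a canceling 1-/2-handle pair encircling the $r_i$-strand band and use it to absorb the $s_i$ full twists in one stroke; since $K_1^i$ links the new dotted circle $r_i$ times, its framing drops by $s_ir_i^2$, giving $r_{i-1}r_i-1-s_ir_i^2=r_ir_{i+1}-1$; finally one slides the $-s_{i-1}$-framed chain 2-handle under the new 1-handle and isotopes $K_1^i$ into a closed braid on $r_{i+1}$ strands whose band crosses the new carving disk $s_{i+1}$ times, with $r_{i+2}$ strands crossing once more, realizing the linking $s_{i+1}r_{i+1}+r_{i+2}=r_i$. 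Your list of admissible moves (slides, blow-ups/blow-downs, isotopy) omits precisely this surgery-plus-canceling-pair step, and the issue you defer as the ``main obstacle''---verifying that the local move reproduces the next normal form with the correct framings---is the entire proof of the proposition; as written the proposal is a plan rather than an argument. (Blow-ups could in principle substitute for the canceling pair, but $s_i$ separate $\pm1$-framed circles do not directly yield the single $\mp s_i$-framed chain component of the stated normal form, so that route needs additional consolidation you have not supplied.)

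A smaller mismatch: the base case requires no preliminary slide. In the paper's normal form $B_{p,q}^0$ is $B_{p,q}$ itself (the framing $r_{-1}r_0-1=pq-1$ matches on the nose), so the case $i=0$ is immediate; the $q$-fold slide over the 1-handle is used only to display the Stein picture $(B_{p,q},J_{p,q})$ elsewhere in the paper, not to start this induction.
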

	\begin{proof}
		We induct on $i$.  
		When $i=0$, the result is immediate since $B_{p,q}^0 \approx B_{p,q}$. 
		Therefore, the proposition holds provided that 
		$\partial B_{p,q}^i \approx \partial B_{p,q}^{i+1}$.
		Let $K_1^i$ be the attaching circle of the $r_{i-1}r_i-1$-framed 
		2-handle in $B_{p,q}^i$.
		Suppose the result holds for some $i\leq\ell$.  For $i+1$,
		first, surger the single 1-handle and
		introduce a canceling pair of 1- and 2-handles to remove the 
		$s_i$-full twists between $K_1^i$ and the, now surgered, 1-handle
		(Figure \ref{Figure: B_p,q Boundary Induction Canceling Pair}).  
		\begin{figure}[!ht]
			\centering
				\includegraphics[scale=.55]{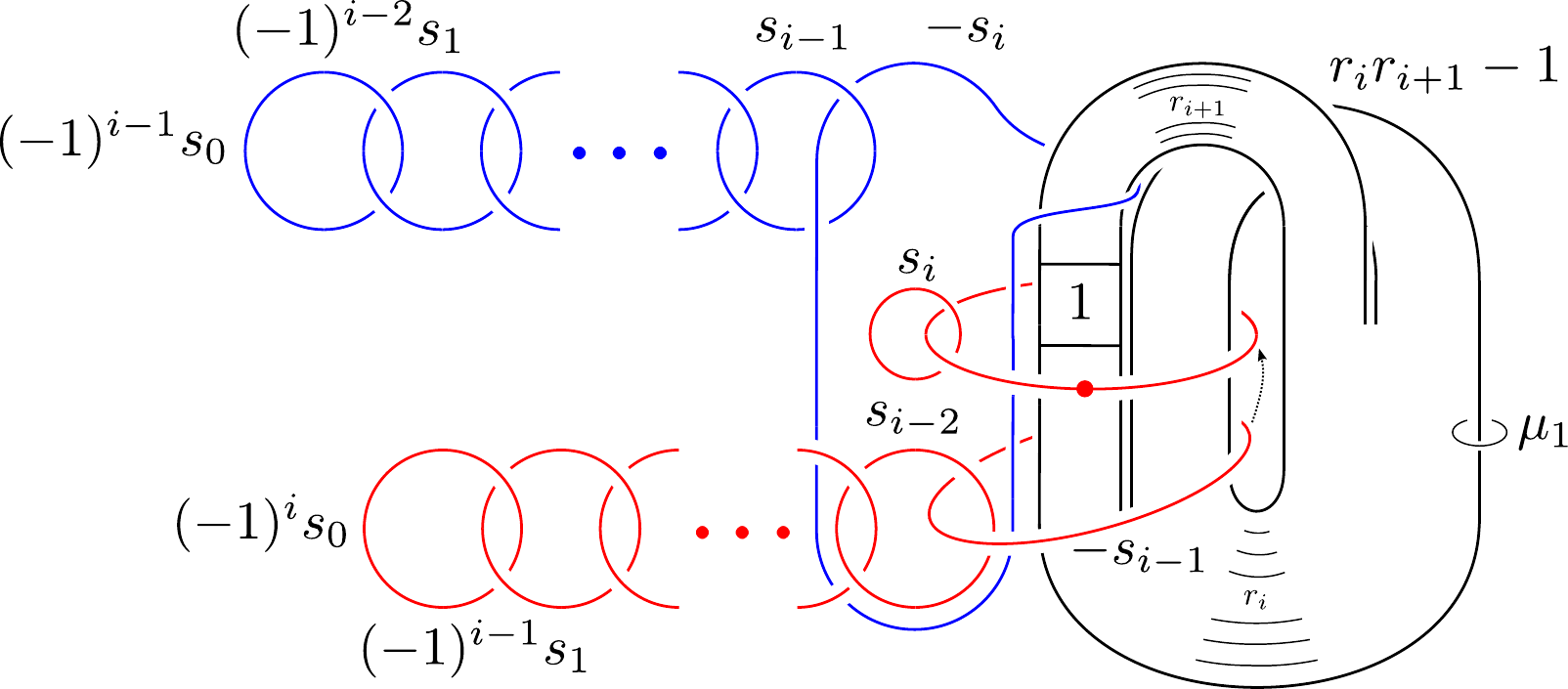}
			\caption{\small Introducing a canceling pair after surgery.}
				\label{Figure: B_p,q Boundary Induction Canceling Pair}
		\end{figure}
		Since $K_1^i$ links the new 1-handle $r_i$ times,
		the framing on $K_1^i$ decreases by $s_ir_i^2$ and
		the new framing on $K_1^i$ is
		\[
			r_{i-1}r_i-1 - s_ir_i^2 = r_i(r_{i-1}-s_ir_i) -1 = r_ir_{i+1}-1.
		\]				
		Sliding the $-s_{i-1}$-framed 2-handle under the new 1-handle as
		indicated in Figure \ref{Figure: B_p,q Boundary Induction Canceling Pair},
		and isotoping the $r_{i+1}$-stranded band (see Figure 
		\ref{Figure: B_p,q Boundary Induction Post Slide/Isotopy})
		\begin{figure}[!ht]
			\centering
				\includegraphics[scale=.55]{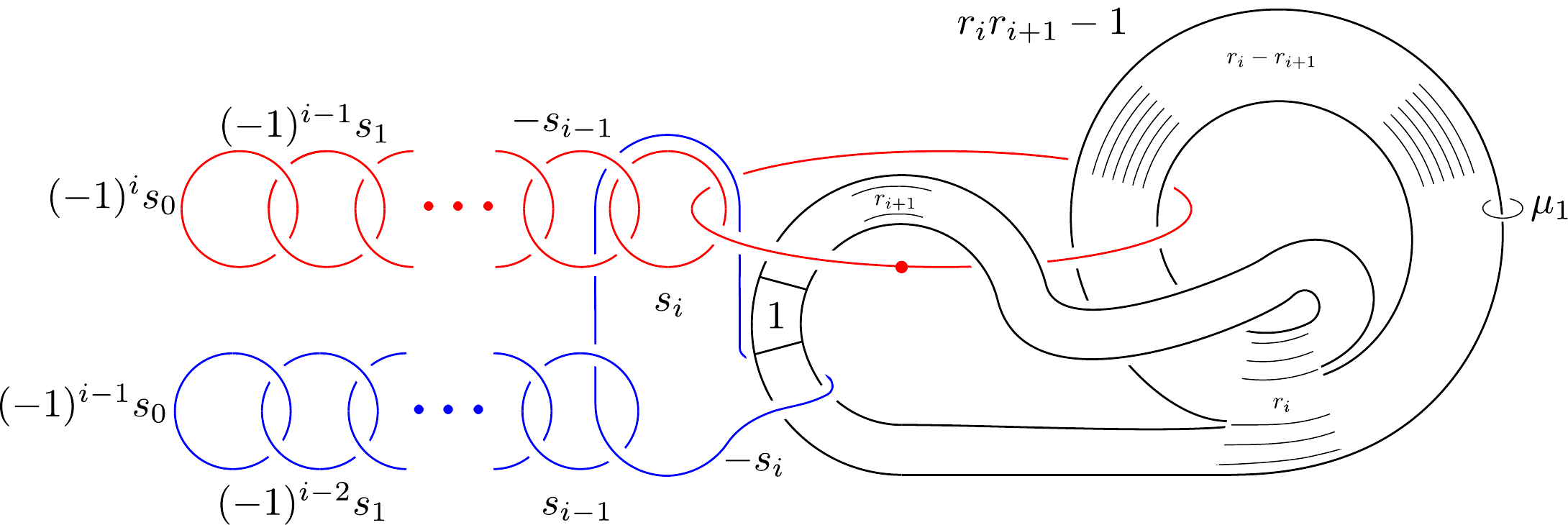}
			\caption{\small Isotoping $K_1^i$.}
				\label{Figure: B_p,q Boundary Induction Post Slide/Isotopy}
		\end{figure}
		we find that 
		the $r_{i+1}$-stranded band traverses the 1-handle (positively) 
		$s_{i+1}$-times as a complete band, while $r_{i+2}$-strands traverse 
		an additional one time to make up the complete 
		$s_{i+1}r_{i+1}+r_{i+2}=r_i$ linking.  With this view in mind, we isotope 
		$K_1^i$ into a closed braid on $r_{i+1}$ strands appropriately linking
		the carving disk of the 1-handle - Figure 
		\ref{Figure: B_p,q Boundary Induction Conclusion}.  The
		result holds by induction.
		\begin{figure}[!ht]
			\centering
				\includegraphics[scale=.55]{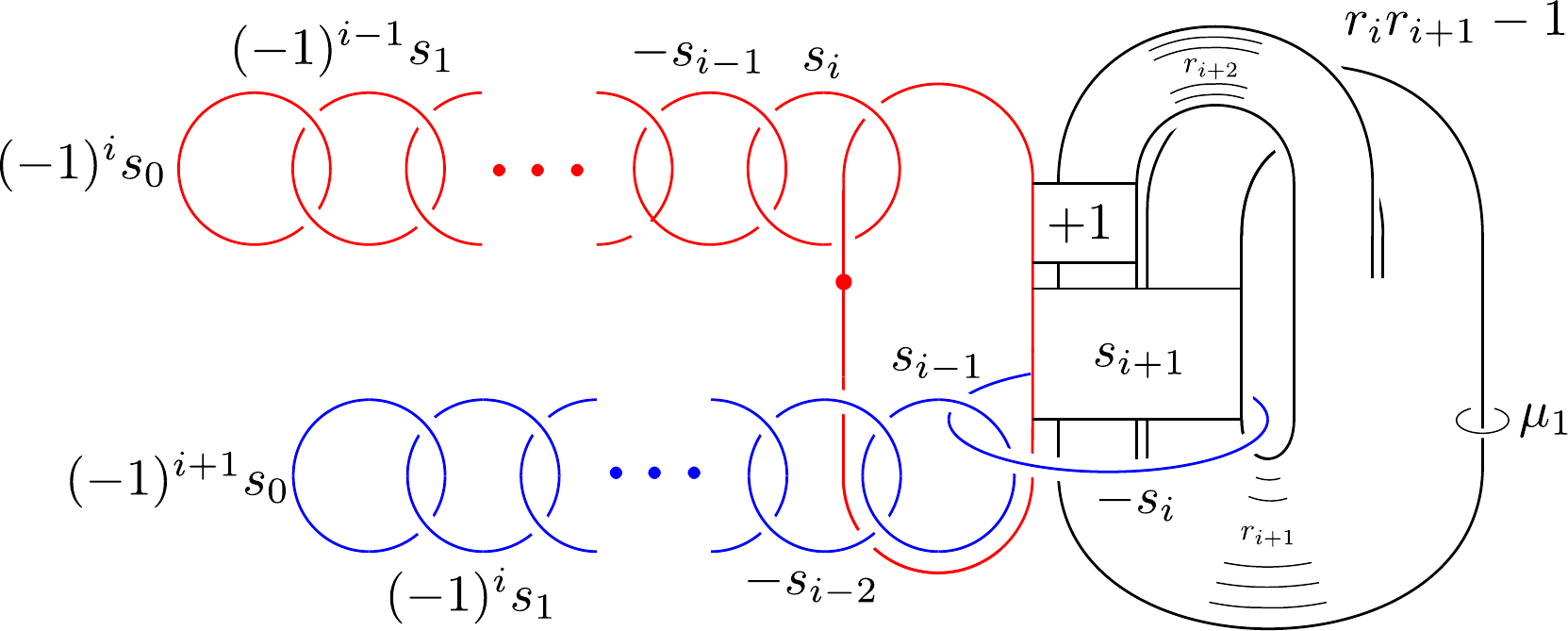}
			\caption{\small Further isotopy of $K_1^i$ to $K_1^{i+1}$}
				\label{Figure: B_p,q Boundary Induction Conclusion}
		\end{figure}		
	\end{proof}
	
	\begin{remark}
		At no point does $\mu_1$, the meridian of $K_1^i$, get damaged under the
		boundary diffeomorphisms defined in Proposition 
		\ref{Proposition: B_p,q Boundary Induction}.
		In particular, for each $i$, $\mu_1$ bounds a disk in $B_{p,q}^i$
		and the image of a collar neighborhood of $\mu_1$ arising from 
		such a disk persists under the boundary 
		diffeomorphisms defined above - that is that each diffeomorphism
		preserves the 0-framing on $\mu_1$.  
		
		Since $r_{\ell+1}=1$ and $r_{\ell+2}=0$, 
		by definition, $s_{\ell+1} = s_{\ell+1}r_{\ell+1}+r_{\ell+2} = r_\ell$.
		So, by looking at $B_{p,q}^{\ell+1}$ we arrive at the following
		result of Casson and Harer \cite{CassonHarer-RationalBalls}.
	\end{remark}
	\begin{cor}\label{Corollary: boundary of B_p,q is a lens space}
		$\partial B_{p,q}\approx L(p^2,pq-1)$.
	\end{cor}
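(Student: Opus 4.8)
The plan is to extract the corollary from Proposition~\ref{Proposition: B_p,q Boundary Induction} by passing to the terminal index $i=\ell+1$ and then simplifying the resulting diagram.

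First I would apply Proposition~\ref{Proposition: B_p,q Boundary Induction} with $i=\ell+1$, which gives $\partial B_{p,q}\approx \partial B_{p,q}^{\ell+1}$. The point of stopping at this index is that, by Definition~\ref{Definition: Euclidean Sequences}, $r_{\ell+1}=1$ and $r_{\ell+2}=0$; feeding these into the recursion $r_\ell = r_{\ell+1}s_{\ell+1}+r_{\ell+2}$ gives $s_{\ell+1}=r_\ell$, exactly as recorded in the remark preceding the statement. In Figure~\ref{Figure: B_p,q Boundary Induction)} this collapses the distinguished $2$-handle: its attaching circle $K_1^{\ell+1}$, of framing $r_\ell r_{\ell+1}-1=r_\ell-1$, becomes a closed braid on a single strand (since $r_{\ell+1}=1$) with no partial band (since $r_{\ell+2}=0$) — that is, an honest unknot looping the carving disk of the $1$-handle $r_\ell=s_{\ell+1}$ times — while the rest of $B_{p,q}^{\ell+1}$ is the chain of $2$-handles (carrying framings among the $-s_i$) accumulated over the course of the induction.

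Next I would surger the $1$-handle and run the remaining Kirby calculus. Because the band of $K_1^{\ell+1}$ is now a single unknotted strand, this is a routine sequence of handle slides and cancellations (or Rolfsen twists) that terminates in a framed link with no $1$-handles — a linear chain of unknots. It then remains to check that the sequence of framings so obtained is precisely the one the Euclidean algorithm on $(p,q)$ attaches to $-p^2/(pq-1)$; granting that, the conventions recalled in Section~\ref{Section: Introduction} identify the boundary of this linear plumbing as $L(p^2,pq-1)$, whence $\partial B_{p,q}\approx L(p^2,pq-1)$.

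The one step that takes genuine care is this final continued-fraction bookkeeping: one must verify that the framings read off $B_{p,q}^{\ell+1}$ — which mix the $s_i$ with the residual coefficient $r_\ell-1$ contributed by $K_1^{\ell+1}$ — assemble into the continued-fraction expansion of $-p^2/(pq-1)$ rather than some other rational number built from the same Euclidean data. This is exactly the computation of Casson and Harer in \cite{CassonHarer-RationalBalls} (and it resurfaces in the algebraic preparations of Section~\ref{Section: Algebraic Details}), so I would simply invoke it; every other move in the argument is visible directly in Figure~\ref{Figure: B_p,q Boundary Induction)}.
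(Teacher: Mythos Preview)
Your proposal is correct and follows essentially the same route as the paper: pass to $B_{p,q}^{\ell+1}$ via Proposition~\ref{Proposition: B_p,q Boundary Induction}, use $r_{\ell+1}=1$, $r_{\ell+2}=0$, $s_{\ell+1}=r_\ell$ to reduce $K_1^{\ell+1}$ to a single-strand unknot, and then finish with Kirby moves to reach a linear plumbing. The only differences are cosmetic: the paper spells out the specific moves (introduce one more canceling $1$--$2$ pair as in the inductive step, slide the $-s_\ell$-framed handle and $\mu_1$ under it, surger and blow down) to arrive at the explicit chain $[-s_0,s_1,\ldots,\pm r_\ell,1,\mp r_\ell,\ldots,-s_1,s_0]$, and then verifies that this equals $-p^2/(pq-1)$ internally via Lemma~\ref{Lemma: intersection form algebra} rather than by appeal to \cite{CassonHarer-RationalBalls}.
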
			
	\begin{proof}
		By Proposition \ref{Proposition: B_p,q Boundary Induction}, we have that 
		$\partial B_{p,q} \approx \partial B_{p,q}^{\ell+1}$ (Figure
		\ref{Figure: B_p,q Boundary Induction - Final Case)}).		
		\begin{figure}[!ht]
			\centering
				\includegraphics[scale=.55]{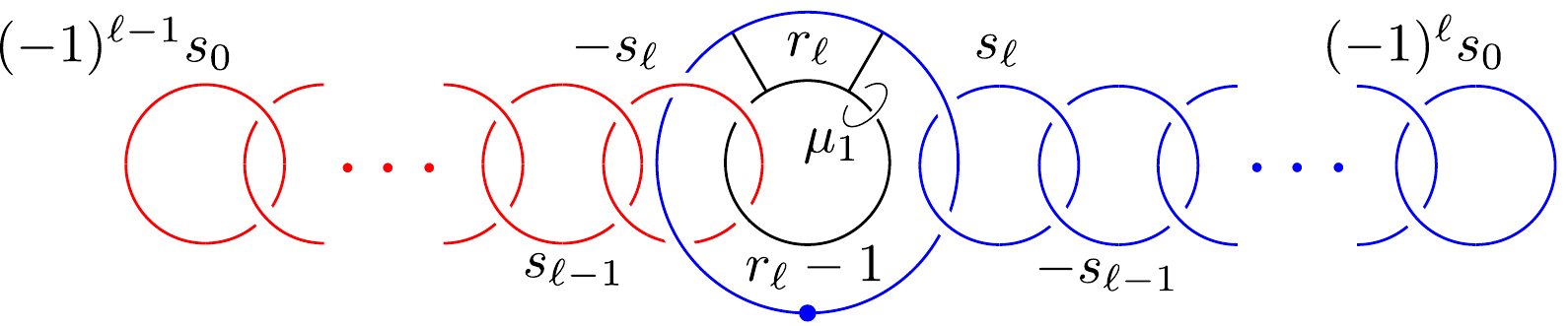}
			\caption{\small The space $B_{p,q}^{\ell+1}$.}
				\label{Figure: B_p,q Boundary Induction - Final Case)}
		\end{figure}
		We show that $\partial B_{p,q}^{\ell+1}$ is diffeomorphic to a linear plumbing
		of disk-bundles over $S^2$ as follows.  Surger the 1-handle and 
		introduce a canceling 1- and 2-handle, as
		in the induction step of Proposition 
		\ref{Proposition: B_p,q Boundary Induction},
		(top of Figure \ref{Figure: B_p,q Boundary Induction - Final Case2)}).
		Next, slide the
		$-s_\ell$-framed 2-handle as well as $\mu_1$ under the 1-handle as indicated
		in the top of Figure \ref{Figure: B_p,q Boundary Induction - Final Case2)}
		(middle of Figure \ref{Figure: B_p,q Boundary Induction - Final Case2)}).  
		Surgering the new 1-handle and blowing down gives the linear plumbing 
		(bottom of Figure \ref{Figure: B_p,q Boundary Induction - Final Case2)}).  
		\begin{figure}[!ht]
			\begin{tikzpicture}[xscale=.9]
				\node[inner sep=0pt] at (0,2.25)
    				{\includegraphics[scale=.55]{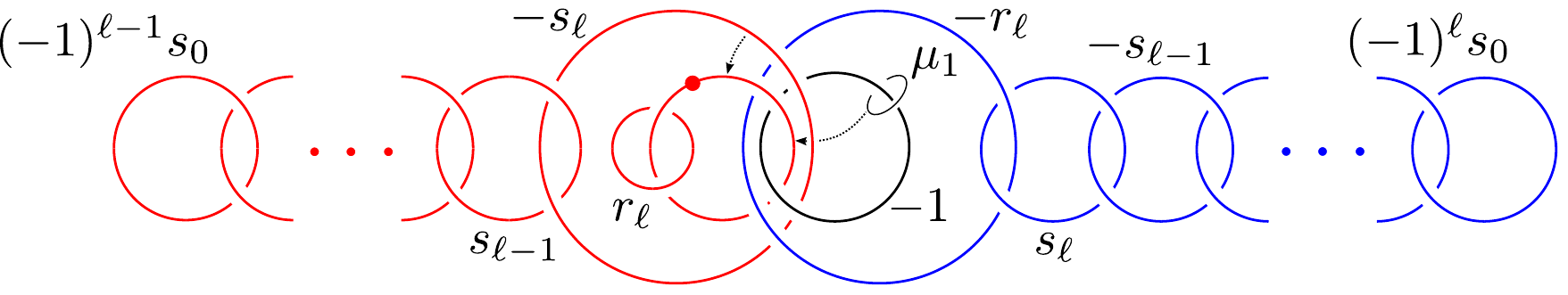}};
    			\node[inner sep=0pt] at (0,0)
					{\includegraphics[scale=.55]{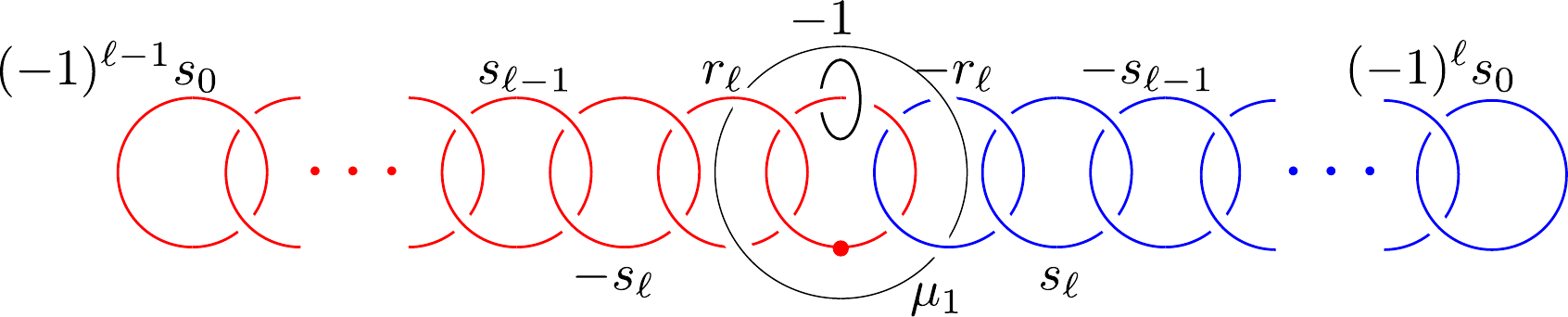}};
				\node[inner sep=0pt] at (0,-2.25)
		    		{\includegraphics[scale=.55]{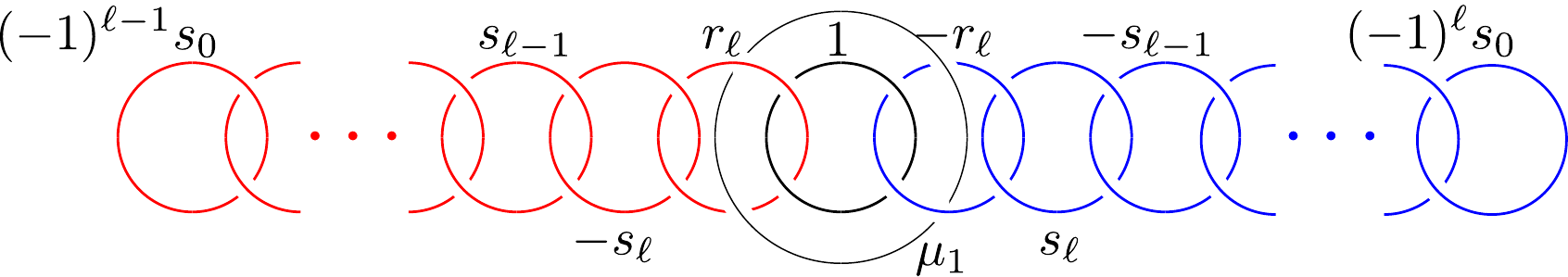}};
    		\end{tikzpicture}
			\caption{\small From top to bottom: The introduction of a canceling pair to
						$B_{p,q}^{\ell+1}$ after surgery; the result of
						the indicated slides; a linear plumbing
						associated to $\partial B_{p,q}$.}
				\label{Figure: B_p,q Boundary Induction - Final Case2)}
		\end{figure}
	\end{proof}
	\begin{remark}\label{Remark: B_p,q 0-framing preserved}
		From Lemma \ref{Lemma: intersection form algebra}, we
		see that the above linear plumbing bounds $L(p^2,pq-1)$.  Indeed
		\begin{align*}
			\left[-s_0,s_1,\ldots,  \pm r_\ell ,1 , 
				\mp r_\ell,\ldots, -s_1,s_0\right] 
					= -\frac{p^2}{pq-1}.
		\end{align*}
		Notice also that the image of $\mu_1$ is given as the 0-framed 
		push-off of the attaching circle of the central 1-framed unknot. 
		We'll trace where the curve, $\gamma$ in Figure \ref{Figure: Boundary Diff},
		goes as well - finding that it too goes to the 0-framed push-off
		of the central 1-framed unknot via an appropriately
		defined diffeomorphism.  To define this diffeomorphism, in a structurally 
		similar manner to that of Proposition \ref{Proposition: B_p,q Boundary Induction},
		we note the following fact about $A_{m,n}$.		
	\end{remark}
	
	\begin{lem}\label{Lemma: Shapes of Amn}
		$A_{m,n}$ is given by Figure \ref{Figure: Alernative A_m,n}.
		\begin{figure}[!ht]
			\centering
				\includegraphics[scale=.55]{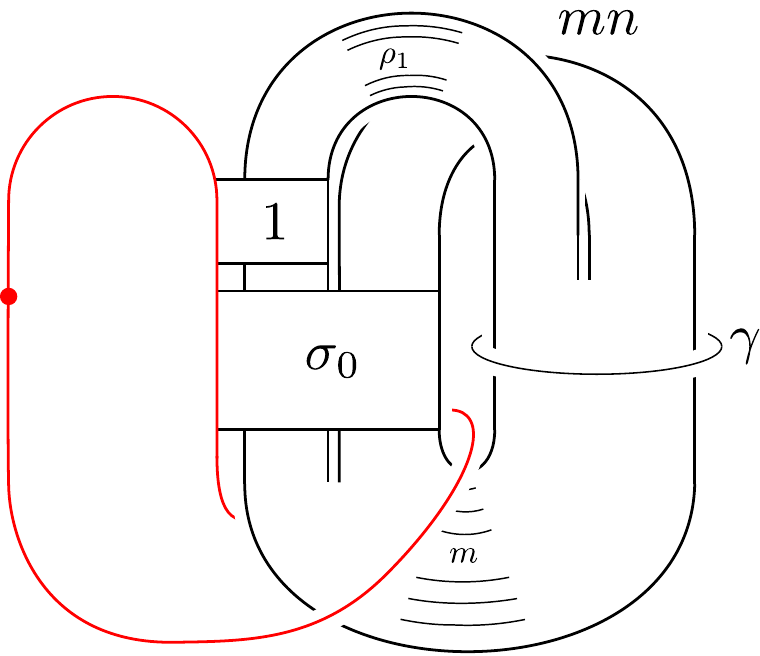}
			\caption{\small An alternative description of $A_{m,n}$.}
				\label{Figure: Alernative A_m,n}
		\end{figure}			
	\end{lem}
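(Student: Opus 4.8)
The plan is to obtain Figure \ref{Figure: Alernative A_m,n} directly from Yamada's defining data for $A_{m,n}$ by handle calculus, paralleling the unraveling in the proof of Proposition \ref{Proposition: A Stein structure on A_m,n}. Recall that $A_{m,n}$ is $B^4$ with a $1$-handle attached --- producing $S^1\times B^3$ --- together with a single $2$-handle glued along a non-separating simple closed curve $\gamma_{m,n}$ on a once-punctured torus $\Sigma\subset S^1\times S^2=\partial(S^1\times B^3)$, the attaching framing being the surface framing induced by $\Sigma$ and $\gamma_{m,n}$ traversing the two handles of $\Sigma$ exactly $m$ and $n$ times.

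First I would fix a concrete model of $\Sigma$ inside the standard genus-one Heegaard picture of $S^1\times S^2$: draw $S^1\times B^3$ as a single dotted (unknotted) circle and realize $\Sigma$ as a disk with two bands, the first band running once over the $1$-handle and the second a trivial band in a ball disjoint from it. In this model $\gamma_{m,n}$ is the closure of an explicit braid running $m$ strands through one band and $n$ through the other, and its surface framing is $mn$, just as for the $(m,n)$-torus knot on a standard torus; this matches the framing in the definition and in Figure \ref{Figure: (A_m,n,J_1)}. The point of this step is merely to replace the abstract surface-framing condition by an honest framed link diagram.

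The remaining content is a finite sequence of ambient isotopies of the attaching link, together with an isotopy of the belt region of the $1$-handle, and --- if Figure \ref{Figure: Alernative A_m,n} is drawn with the strands regrouped according to the first Euclidean division $n=m\sigma_0+\rho_1$ of Definition \ref{Definition: Euclidean Sequences} applied to $(n,m)$ --- the same strand-dragging used in the proof of Proposition \ref{Proposition: A Stein structure on A_m,n}. None of these moves is a handle slide over the $2$-handle, so the framing is unchanged and is tracked by the usual blackboard-framing-plus-writhe bookkeeping. I expect the only real difficulty to be organizational: choosing the model of $\Sigma$ and the braid word for $\gamma_{m,n}$ so that these moves terminate in exactly the picture of Figure \ref{Figure: Alernative A_m,n}, with the twist boxes there recording the sequences $\{\rho_i\}$ and $\{\sigma_i\}$ as claimed. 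Alternatively --- and perhaps more cleanly --- one can start from the already-established picture of Figure \ref{Figure: (A_m,n,J_1)}, which Proposition \ref{Proposition: A Stein structure on A_m,n} identifies with $A_{m,n}$, and reach Figure \ref{Figure: Alernative A_m,n} by partially reversing that unraveling together with a single $1$-handle slide.
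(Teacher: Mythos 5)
Your proposal is correct and matches the paper's argument, which (taking $n=m\sigma_0+\rho_1$ as you do) simply observes that the alternative picture is obtained from Yamada's defining diagram by an isotopy of the attaching circle of the 2-handle, with no slides over the 2-handle and hence no framing change. The extra detail you supply (a concrete model of the once-punctured torus and the strand-dragging bookkeeping), as well as the alternative route back from Figure \ref{Figure: (A_m,n,J_1)}, is fine but not needed beyond what the paper's one-line isotopy argument already asserts.
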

	\begin{proof}
		As before, we are taking $n=m\sigma_0+\rho_1$. 
		The result follows from an isotopy of the 2-handle.
	\end{proof}
	
	\begin{proposition}\label{Proposition: A_m,n Boundary Induction}
		Let $\{\rho_i\}_{i=-1}^{\ell+2}$ and $\{\sigma_i\}_{i=0}^{\ell+1}$ 
		be as defined in Definition \ref{Definition: Euclidean Sequences} 
		(associated to $n>m\geq1$).  Then for each $i\in\{0,\ldots,\ell+1\}$, 
		 $A_{m,n} \stackrel{\partial}{\approx} A_{m,n}^i$
		where $A_{m,n}^i$ is the 4-manifold
		given by Figure \ref{Figure: A_m,n Boundary Induction}.
		\begin{figure}[!ht]
			\centering
				\includegraphics[scale=.55]{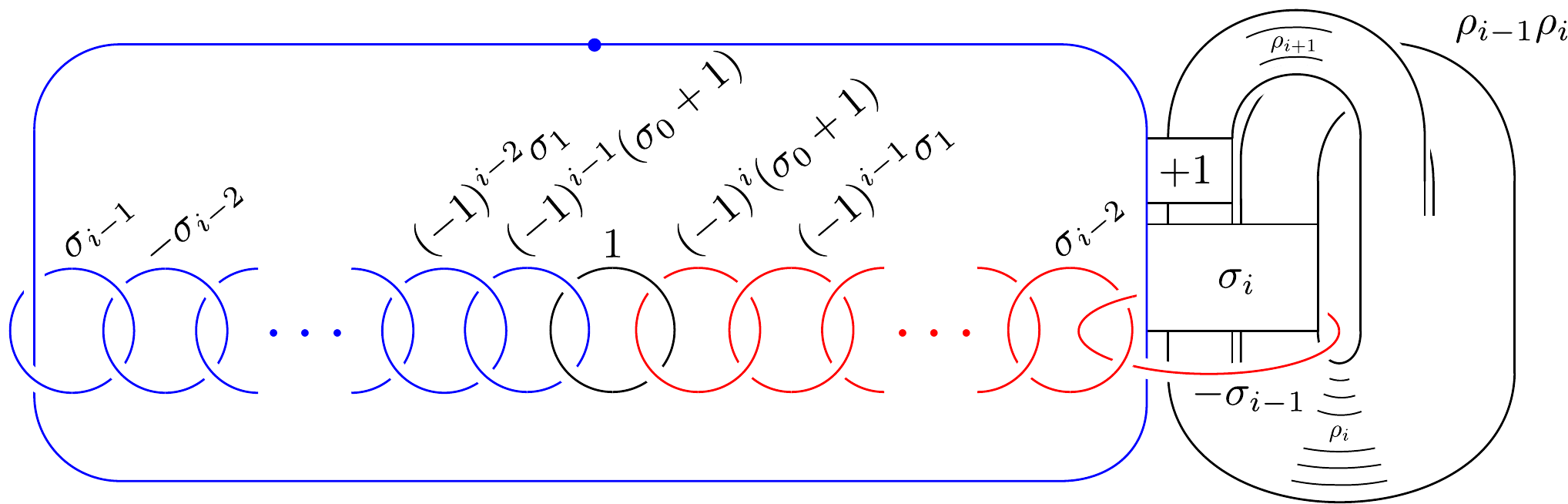}
			\caption{\small The 4-manifold $A_{m,n}^i$}
			\label{Figure: A_m,n Boundary Induction}
		\end{figure}
	\end{proposition}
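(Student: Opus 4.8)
The plan is to argue exactly as in Proposition \ref{Proposition: B_p,q Boundary Induction}, by induction on $i\in\{0,\dots,\ell+1\}$. For the base case $i=0$ I would simply observe that $A_{m,n}^{0}$ is, by construction, the diagram of Figure \ref{Figure: Alernative A_m,n}, so that $A_{m,n}^{0}\approx A_{m,n}$ by Lemma \ref{Lemma: Shapes of Amn} (recall we are using the normalization $n=m\sigma_{0}+\rho_{1}$, i.e.\ $\rho_{-1}=n$, $\rho_{0}=m$). It then remains to produce, for each $i\le\ell$, a diffeomorphism $\partial A_{m,n}^{i}\approx\partial A_{m,n}^{i+1}$, and the proposition follows by induction since $A_{m,n}\stackrel{\partial}{\approx}A_{m,n}^{0}$.

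For the inductive step, write $K_{1}^{i}$ for the attaching circle of the ``large''-framed $2$-handle in $A_{m,n}^{i}$ (the analogue of the $r_{i-1}r_{i}-1$-framed handle of $B_{p,q}^{i}$). As in Figure \ref{Figure: B_p,q Boundary Induction Canceling Pair}, I would surger the unique $1$-handle and introduce a canceling $1$--$2$-handle pair, placed so as to unwind the $\sigma_{i}$ full twists between $K_{1}^{i}$ and the (now surgered) $1$-handle. Since $K_{1}^{i}$ runs over the new $1$-handle $\rho_{i}$ times, its framing changes by $\sigma_{i}\rho_{i}^{2}$, and the Euclidean relation $\rho_{i-1}=\rho_{i}\sigma_{i}+\rho_{i+1}$ from Definition \ref{Definition: Euclidean Sequences} shows that the resulting framing is $\rho_{i}(\rho_{i-1}-\sigma_{i}\rho_{i})+(\text{correction})=\rho_{i}\rho_{i+1}+(\text{correction})$, matching the framing prescribed by Figure \ref{Figure: A_m,n Boundary Induction} at stage $i+1$.

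Next I would slide the auxiliary $2$-handle (the analogue of the $-s_{i-1}$-framed handle of $B_{p,q}^{i}$), together with any curve we wish to track — the image of $\gamma$ from Figure \ref{Figure: Boundary Diff} and the meridian of $K_{1}^{i}$, along with any auxiliary unknot appearing in Figure \ref{Figure: A_m,n Boundary Induction} — under the new $1$-handle, and then isotope the resulting $\rho_{i+1}$-stranded band exactly as in Figure \ref{Figure: B_p,q Boundary Induction Post Slide/Isotopy}. As in the $B_{p,q}$ computation, the band passes over the new $1$-handle $\sigma_{i+1}$ times as a complete band while $\rho_{i+2}$ of its strands pass one additional time, accounting for the required $\sigma_{i+1}\rho_{i+1}+\rho_{i+2}=\rho_{i}$ linking; recasting $K_{1}^{i}$ as a closed braid on $\rho_{i+1}$ strands appropriately linking the carving disk of the $1$-handle then reproduces exactly $A_{m,n}^{i+1}$. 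Because every move is supported away from collars of $\mu_{1}$ and of $\gamma$, their $0$-framings are preserved throughout (cf.\ Remark \ref{Remark: B_p,q 0-framing preserved}), which is precisely the bookkeeping needed for the tracing carried out in the proof of Theorem \ref{Theorem: Boundary Diffeomorphism}.

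The routine topology of each individual handle move is not the difficulty; the main obstacle is the bookkeeping. Relative to $B_{p,q}$, the manifold $A_{m,n}$ carries the extra curve $\gamma$ (and the positive framing $mn$ in place of $pq-1$), so the band isotopy must be performed without crossing that extra component, and one must verify that the re-braided diagram is \emph{literally} Figure \ref{Figure: A_m,n Boundary Induction} and not merely boundary-diffeomorphic to it. Pinning down the exact ``correction'' constant and the signs in the framing at each stage is the delicate part, and is exactly what makes the associated continued fraction telescope correctly to $-p^{2}/(pq-1)$ at the terminal stage $i=\ell+1$ (as in Remark \ref{Remark: B_p,q 0-framing preserved} and Lemma \ref{Lemma: intersection form algebra}).
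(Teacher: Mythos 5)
Your overall strategy is the paper's: mimic Proposition \ref{Proposition: B_p,q Boundary Induction} by surgering the $1$-handle, introducing a canceling pair, using the Euclidean relation $\rho_{i-1}=\rho_i\sigma_i+\rho_{i+1}$ to recompute the framing, and re-braiding $K_1^i$ on $\rho_{i+1}$ strands. However, there are concrete gaps relative to what the proposition actually asserts. First, the base case is not immediate: in the paper it is handled together with the induction step, starting from the decomposition of Lemma \ref{Lemma: Shapes of Amn} and, crucially, performing a blow-up after the indicated slide. That blow-up is what creates the Euler-class $1$ unknot sitting in the middle of the chain of unknots present in every $A_{m,n}^i$ (and which survives as the central $+1$-framed unknot of the linear plumbing in Corollary \ref{Corollary: boundary of A_m,n is a lens space}). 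Your induction never introduces or tracks this component, so it cannot literally reproduce Figure \ref{Figure: A_m,n Boundary Induction}; this extra unknot, together with $\gamma$, is precisely what distinguishes the $A_{m,n}$ computation from the $B_{p,q}$ one. Second, at each stage the paper must wind the chain of unknots, adding $i$ positive half-twists to the left of the Euler-class $1$ bundle and $i$ negative half-twists to the right (Figure \ref{Figure: A_m,n Boundary Induction - final}), in order to get the correct linking among the chain; this step has no analogue in the $B_{p,q}$ argument and is absent from your proposal.

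On the framing, the bookkeeping you defer as an unspecified ``correction'' is in fact clean and should be stated: since the $2$-handle of $A_{m,n}$ carries framing $\rho_{-1}\rho_0=mn$ (with no $-1$, unlike $B_{p,q}$), the framing at stage $i$ is exactly $\rho_{i-1}\rho_i$, and subtracting $\sigma_i\rho_i^2$ gives exactly $\rho_i\rho_{i+1}$; there is no correction term. Declaring that the exact constants, signs, and the identification of the resulting diagram with Figure \ref{Figure: A_m,n Boundary Induction} remain ``to be pinned down'' concedes the substantive content of the proposition, so as written the proposal outlines the right induction but does not yet constitute a proof.
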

	
	\begin{proof}
		We induct on $i$, treating the base case and the induction step
		simultaneously.  For the base case, start with the handle decomposition
		from Lemma \ref{Lemma: Shapes of Amn}.  For the induction step,
		suppose that the result holds for some $i\leq \ell$.  Let $K_1^i$ be the 
		attaching circle of the $\rho_{i-1}\rho_{i}$-framed 2-handle in $A_{m,n}^i$.
		Surger the 1-handle and introduce a canceling 1- and 2-handle 
		(for the base case see the left side of Figure 
		\ref{Figure: A_m,n Boundary Induction) - Base Case},
		for the induction step see Figure \ref{Figure: A_m,n Boundary Induction - slide}).  
		Notice, similar to Proposition \ref{Proposition: B_p,q Boundary Induction}
		the framing of $K_1^i$ changes from $\rho_{i-1}\rho_i$ 
		to $\rho_i\rho_{i+1}$. 
		\begin{figure}[!ht]
			\begin{tikzpicture}[xscale=.9]
				\node[inner sep=0pt] at (0,3)
    				{\includegraphics[scale=.55]{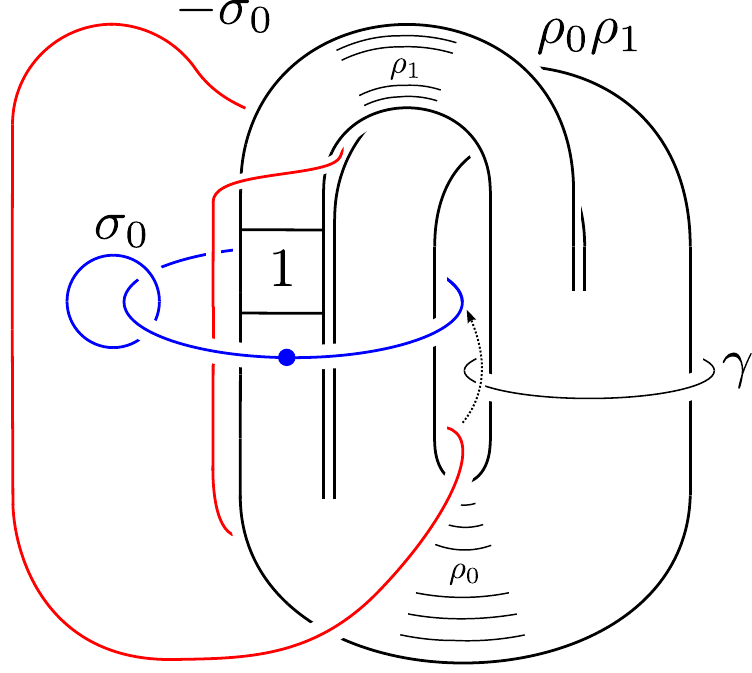}};
				\node[inner sep=0pt] at (5,3)
    				{\includegraphics[scale=.55]{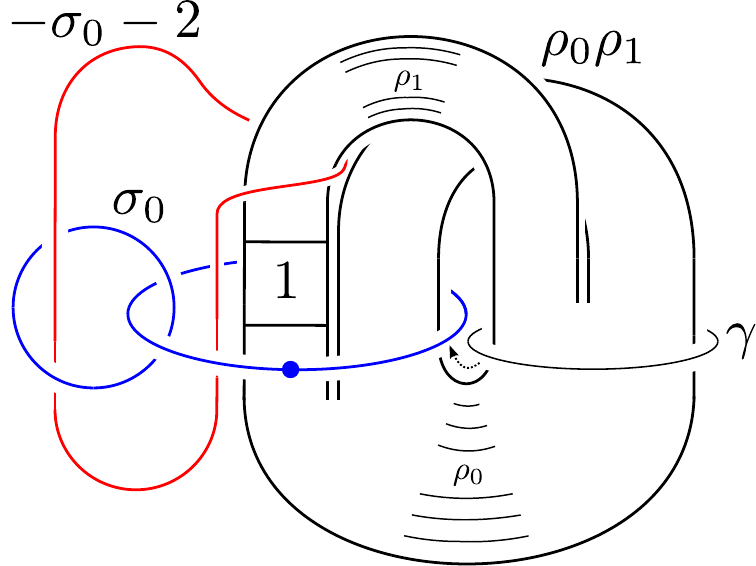}};
				\node[inner sep=0pt] at (10.5,3)
    				{\includegraphics[scale=.55]{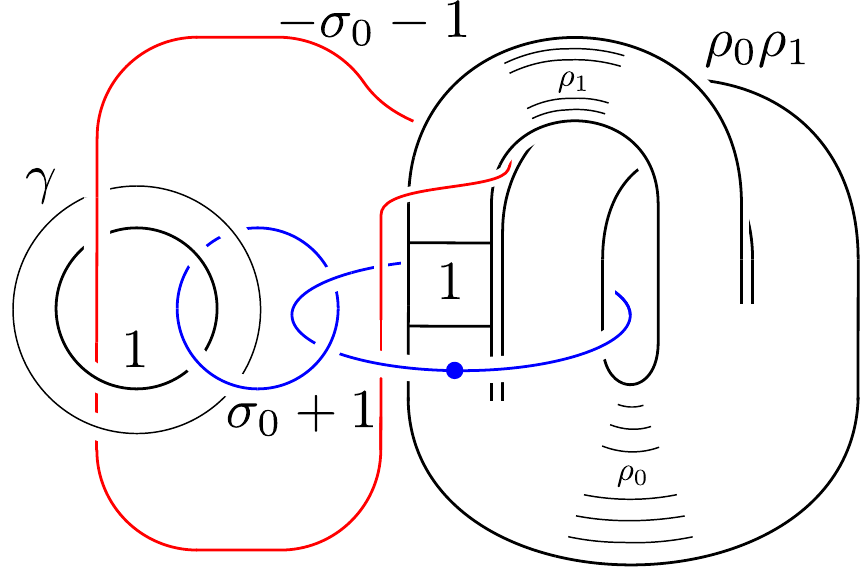}};
			\end{tikzpicture}\centering
			\caption{\small The base case of Proposition 
				\ref{Proposition: A_m,n Boundary Induction}}
					\label{Figure: A_m,n Boundary Induction) - Base Case}
		\end{figure}
		\begin{figure}[!ht]
			\centering
				\includegraphics[scale=.6]{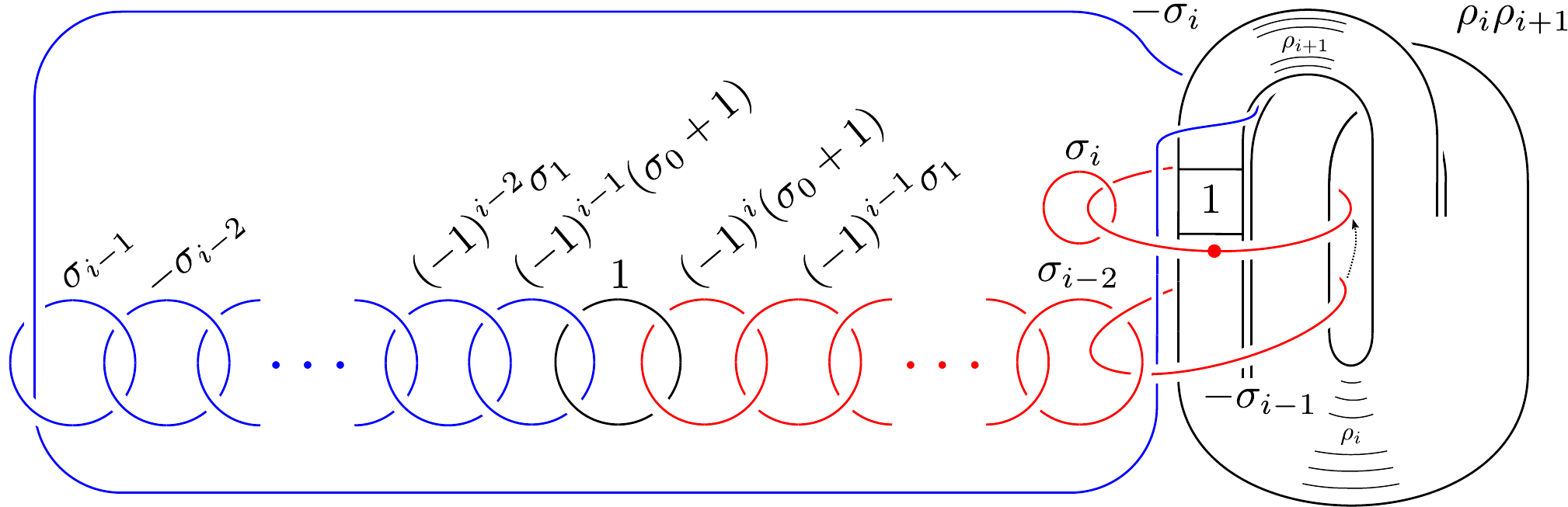}
			\caption{\small Introducing a canceling pair.}
			\label{Figure: A_m,n Boundary Induction - slide}
		\end{figure}
		Slide the now surgered 1-handle as indicated in the respective figures
		and, for the base case, blow-up once 
		(right side of Figure \ref{Figure: A_m,n Boundary Induction) - Base Case}).
		From here the base case follows similarly to the induction step;
		both of which are structurally similar to Proposition 
		\ref{Proposition: B_p,q Boundary Induction}.  Indeed, isotope $K_1^i$ 
		to view a band with $\rho_{i+1}$ stands traversing the 1-handle 
		$\sigma_{i+1}$-times along with $\rho_{i+2}$ of those strands traversing 
		an extra time as in Figure \ref{Figure: A_m,n Boundary Induction-Isotopy}.
		\begin{figure}[!ht]
			\centering
				\includegraphics[scale=.55]{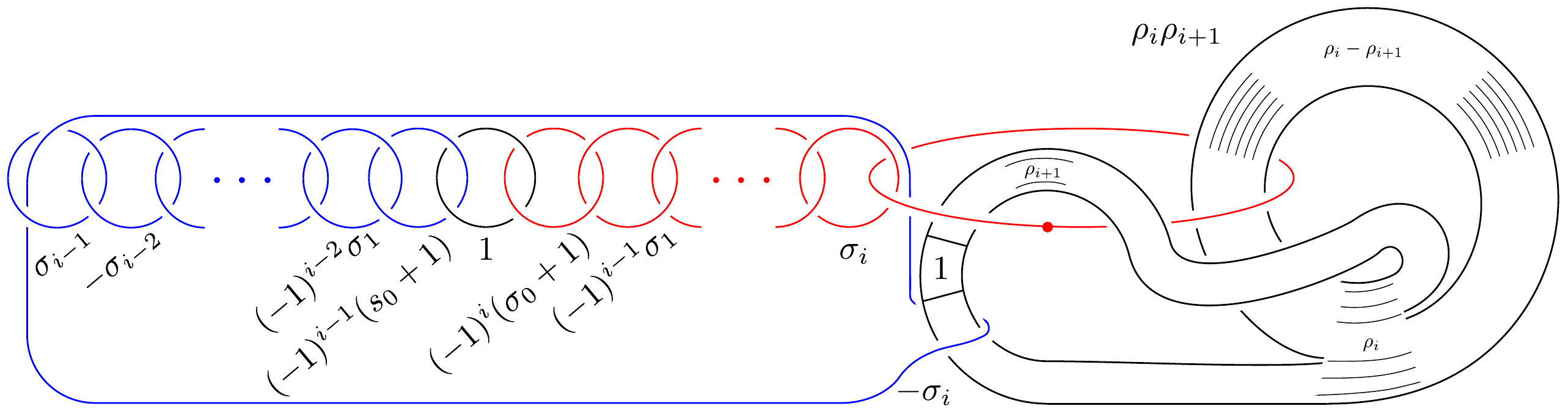}
			\caption{\small Isotoping $K_1^i$ in $A_{m,n}^i$.}
				\label{Figure: A_m,n Boundary Induction-Isotopy}
		\end{figure}
		A further isotopy of $K_1^i$ gives a closed braid on $\rho_{i+1}$-strands
		geometrically linking the carving disk of the new 
		1-handle $\rho_i$-times.  Finally, notice that 
		to get the appropriate linking on the chain of unknots,
		we have to wind the chain (as indicated in Figure
		\ref{Figure: A_m,n Boundary Induction - final}) to add a total
		of $i$ positive half-twists to the left of the euler-class 1 disk-bundle
		along with $i$ negative half-twists to the right.
		The result follows by induction.
		\begin{figure}[!ht]
			\centering
				\includegraphics[scale=.55]{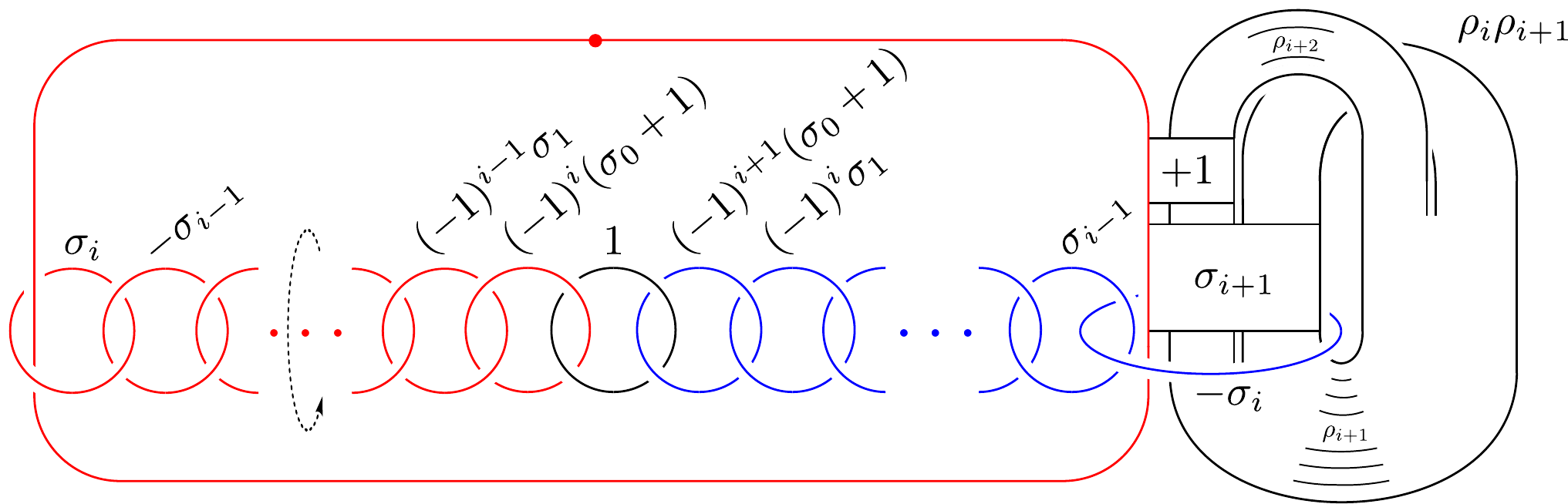}
			\caption{\small Further isotopy of $K_1^i$ to $K_1^{i+1}$ in $A_{m,n}^{i+1}$.}
			\label{Figure: A_m,n Boundary Induction - final}
		\end{figure}
	\end{proof}

	\begin{cor}[\cite{Yamada-Amn} Theroem 1.1]\label{Corollary: boundary of A_m,n is a lens space}
		$\partial A_{m,n}\approx L(p^2,pq-1)$ for $(p-q,q)=A(m,n)$.
	\end{cor}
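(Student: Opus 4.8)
The plan is to follow the template of Corollary~\ref{Corollary: boundary of B_p,q is a lens space}. First I would invoke Proposition~\ref{Proposition: A_m,n Boundary Induction} with $i=\ell+1$ to replace $\partial A_{m,n}$ by $\partial A_{m,n}^{\ell+1}$. Observing that $\rho_{\ell+1}=1$ and $\rho_{\ell+2}=0$, the defining recursion of Definition~\ref{Definition: Euclidean Sequences} yields $\sigma_{\ell+1}=\sigma_{\ell+1}\rho_{\ell+1}+\rho_{\ell+2}=\rho_\ell$, so that in $A_{m,n}^{\ell+1}$ the attaching circle of the $\rho_\ell\rho_{\ell+1}=\rho_\ell$--framed $2$--handle is a closed braid on $\rho_{\ell+1}=1$ strand, i.e.\ an unknot linking the carving disk of the $1$--handle $\rho_\ell$ times. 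This is the same shape of picture that appeared for $B_{p,q}^{\ell+1}$, the only essential difference being the framing $\rho_\ell$ in place of $r_\ell-1$.

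Next I would simplify $A_{m,n}^{\ell+1}$ exactly as in the last step of Corollary~\ref{Corollary: boundary of B_p,q is a lens space}: surger the $1$--handle; introduce a canceling pair of $1$-- and $2$--handles to untwist the framed $2$--handle from the surgered $1$--handle; slide that $2$--handle together with the chain of unknots (which carries $\ell+1$ positive half--twists on one side of the central $1$--framed unknot and $\ell+1$ negative half--twists on the other) over the new $1$--handle; and finally surger the new $1$--handle and blow down the resulting $\pm1$--framed unknots. Tracking the framings through these moves should present $\partial A_{m,n}^{\ell+1}$ as the boundary of a linear plumbing of $D^2$--bundles over $S^2$ with continued fraction $[-\sigma_0,\sigma_1,\ldots,\pm\rho_\ell,1,\mp\rho_\ell,\ldots,-\sigma_1,\sigma_0]$, in complete analogy with Remark~\ref{Remark: B_p,q 0-framing preserved}.

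Finally I would evaluate this continued fraction. Applying Lemma~\ref{Lemma: intersection form algebra} to the Euclidean sequences $\{\rho_i\},\{\sigma_i\}$ of the pair $(n,m)$ shows it equals $-n^2/(mn-1)$, so that $\partial A_{m,n}\approx L(n^2,mn-1)$; then Lemma~\ref{Lemma: Defining A(p-q,q)}, which encodes the combinatorics of Yamada's function $A$, identifies $L(n^2,mn-1)$ with $L(p^2,pq-1)$ whenever $(p-q,q)=A(m,n)$, which is the assertion. I expect the only real obstacle to be arithmetic rather than topological: establishing the continued--fraction evaluation of Lemma~\ref{Lemma: intersection form algebra} and reconciling it with the definition of $A$ in Lemma~\ref{Lemma: Defining A(p-q,q)}. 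These are precisely the computations collected in Section~\ref{Section: Algebraic Details}.
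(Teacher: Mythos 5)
There is a genuine error in the endgame of your argument. You assert that the handle moves on $A_{m,n}^{\ell+1}$ produce, ``in complete analogy'' with Remark \ref{Remark: B_p,q 0-framing preserved}, the plumbing $[-\sigma_0,\sigma_1,\ldots,\pm\rho_\ell,1,\mp\rho_\ell,\ldots,-\sigma_1,\sigma_0]$, and hence that $\partial A_{m,n}\approx L(n^2,mn-1)$. That plumbing is the one associated to $\partial B_{n,m}$, not to $\partial A_{m,n}$, and the conclusion is impossible: from the handle decomposition one has $H_1(\partial A_{m,n};\Z)=\langle\gamma_0,\gamma_1:(m+n)\gamma_1=0,\ mn\gamma_1=-(m+n)\gamma_0\rangle$, a group of order $(m+n)^2=p^2$, whereas $L(n^2,mn-1)$ has first homology of order $n^2$. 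So the sequence of moves you describe cannot yield the plumbing you claim, and the final step --- ``Lemma \ref{Lemma: Defining A(p-q,q)} identifies $L(n^2,mn-1)$ with $L(p^2,pq-1)$'' --- is not something that lemma asserts (nor could it, as the two spaces are never diffeomorphic here).

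What the induction of Proposition \ref{Proposition: A_m,n Boundary Induction} actually produces, after surgering the $1$-handle, introducing a canceling pair, sliding the $-\sigma_\ell$-framed $2$-handle under the $1$-handle and the $-\rho_\ell$-framed one over the $0$-framed $2$-handle, and canceling the $1$-handle against the $0$-framed $2$-handle, is the plumbing of Figure \ref{Figure: A Basis for A_m,n}: the central $+1$-framed unknot is flanked by $\mp(\sigma_0+1)$, the $\sigma_i$ appear with \emph{increasing} index as one moves outward, and the two ends carry $\pm\rho_\ell$. The identification with $L(p^2,pq-1)$ is then obtained not by evaluating a new continued fraction but by matching this chain with the $B_{p,q}$ chain $[-s_0,s_1,\ldots,\pm r_\ell,1,\mp r_\ell,\ldots,-s_1,s_0]$ via the dictionary of Lemma \ref{Lemma: Defining A(p-q,q)}: $\sigma_0+1=r_\ell$, $\sigma_i=s_{\ell+1-i}$, and $\rho_\ell=s_0$ when $A(p-q,q)=(m,n)$ (or $(n,m)$). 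Since that chain was already shown in Remark \ref{Remark: B_p,q 0-framing preserved} (via Lemma \ref{Lemma: intersection form algebra}) to give $-p^2/(pq-1)$, the corollary follows. Your first two steps (passing to $A_{m,n}^{\ell+1}$ and simplifying by surgery, a canceling pair, slides and cancellation) are the right skeleton; the gap is that you did not track the resulting weights, and the assumed answer is the wrong lens space.
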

	\begin{proof}
		By Proposition \ref{Proposition: A_m,n Boundary Induction}, 
		$\partial A_{m,n} \approx \partial A_{m,n}^{\ell+1}$ 
		(figure \ref{Figure: A_m,n^l+1}).
		\begin{figure}[!ht]
			\centering
				\includegraphics[scale=.55]{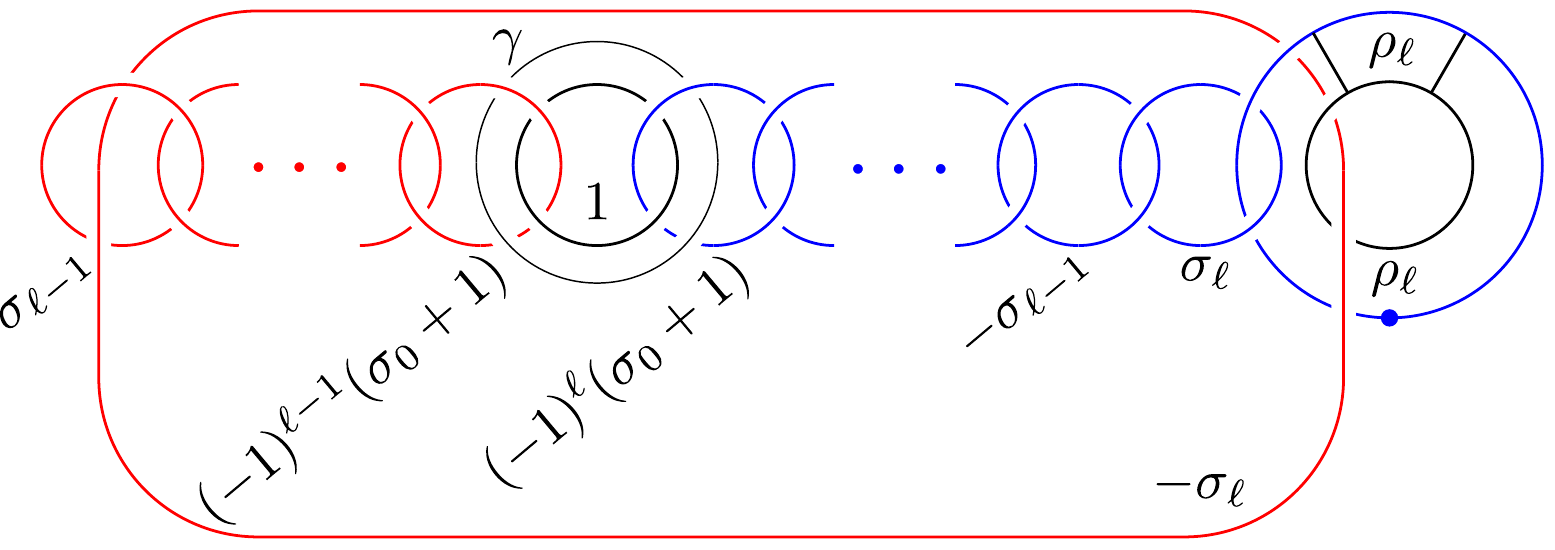}
			\caption{\small  The space $A_{m,n}^{\ell+1}$}\label{Figure: A_m,n^l+1}
		\end{figure}
		We proceed as in Corollary \ref{Corollary: boundary of B_p,q is a lens space}.
		\begin{figure}[!ht]
			\begin{tikzpicture}[xscale=.9]
				\node[inner sep=0pt] at (0,3.5)
    				{\includegraphics[scale=.55]
	    				{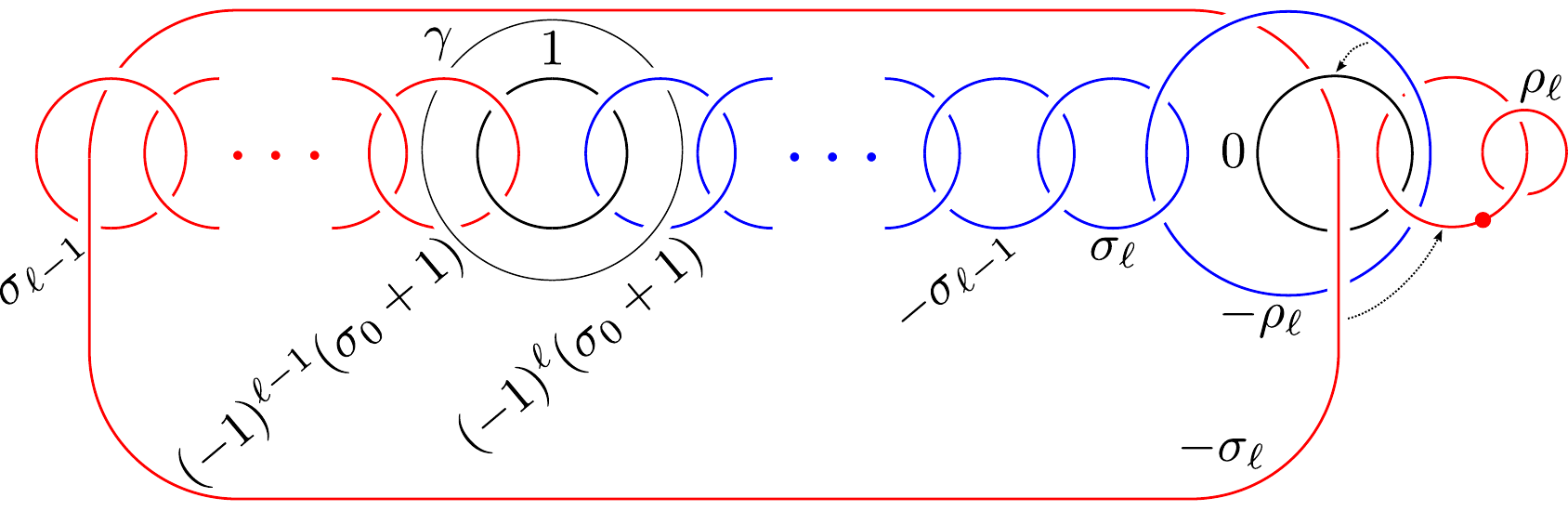}};
    			\node[inner sep=0pt] at (0,0)
					{\includegraphics[scale=.55]
						{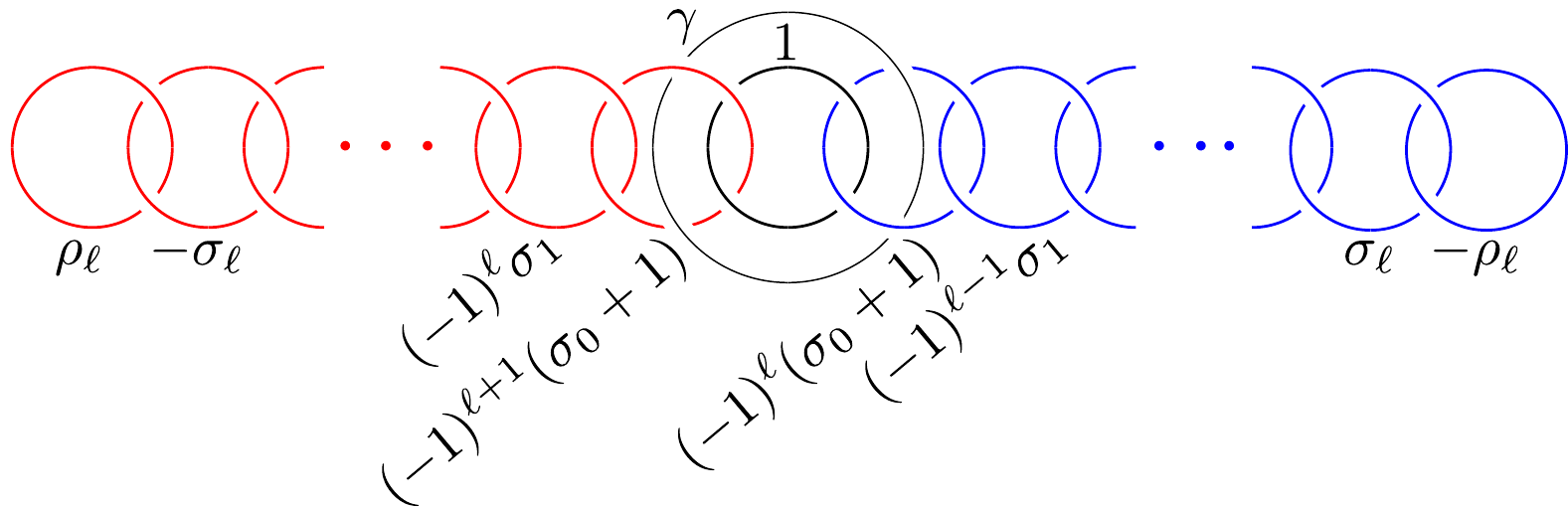}};				
    		\end{tikzpicture}
			\caption{\small The result of surgering $A_{m,n}^{\ell+1}$ and introducing
						a canceling pair; a linear plumbing associated to 
						$\partial A_{m,n}$}\label{Figure: A_m,n Boundary Induction Final Case}
		\end{figure}
		After surgering the 1-handle and 
		introducing a canceling 1- and 2-handle
		(top of Figure \ref{Figure: A_m,n Boundary Induction Final Case} ), 
		slide the $-\sigma_\ell$-framed 2-handle under the 1-handle and 
		the $-\rho_\ell$-framed 2-handle over the 0-framed 2-handle
		as indicated in the top of Figure \ref{Figure: A_m,n Boundary Induction Final Case}.
		Canceling the 1-handle with the 0-framed 2-handle gives
		the linear plumbing (bottom of Figure 
		\ref{Figure: A_m,n Boundary Induction Final Case}).
	\end{proof}		
	\begin{remark}\label{Remark: A_m,n 0-framing preserved}
		The fact that $\partial A_{m,n}$ is $L(p^2,pq-1)$ for $A(m,n)=(p-q,q)$
		follows by noting that given $p$ and $q$,
		or equivalently $m$ and $n$, we can define the other pair by an 
		appropriate identification of the linear plumbings in Corollaries
		\ref{Corollary: boundary of B_p,q is a lens space} and
		\ref{Corollary: boundary of A_m,n is a lens space} - provided
		that $s_0>1$ (that is that $p-q>q$).  In fact,
		this could be taken as the definition of the function $A$ defined 
		in \cite{Yamada-Amn}.  The latter claim is the content of Lemma 
		\ref{Lemma: Defining A(p-q,q)}.  
		Notice also that $\gamma$ bounds a disk in each 
		$\partial A_{m,n}^i$ as well as in the linear plumbing of 
		Figure \ref{Figure: A_m,n Boundary Induction Final Case}.
		Furthermore, each boundary diffeomorphism defined 
		in Proposition \ref{Proposition: A_m,n Boundary Induction}
		and those of Corollary \ref{Corollary: boundary of A_m,n is a lens space}
		preserve the 0-framing of $\gamma$ specified by those disks.
	\end{remark}
	
	\begin{proof}[Proof of Theorem \ref{Theorem: Boundary Diffeomorphism}]
		As $A(p-q,q)=(m,n)$, we can identify the plumbings of 
		Figures \ref{Figure: B_p,q Boundary Induction - Final Case2)}
		and \ref{Figure: A_m,n Boundary Induction Final Case}.		
		Then, by first, applying the diffeomorphisms of Proposition
		\ref{Proposition: B_p,q Boundary Induction} we get
		a diffeomorphism from $\partial B_{p,q}$ to the boundary
		of the linear plumbing of the bottom
		of Figure \ref{Figure: B_p,q Boundary Induction - Final Case2)}\
		caring $\mu_1$ as indicated.
		Then applying the diffeomorphisms of Proposition 
		\ref{Proposition: A_m,n Boundary Induction} in reverse 
		from the boundary of the linear plumbing of 
		Figure \ref{Figure: A_m,n Boundary Induction Final Case}
		to $A_{m,n}$ gives the required diffeomorphism 
		$f:\partial B_{p,q}\to \partial A_{m,n}$.  The fact that carving the
		disk bounding $f(\mu_1)$ gives $S^1\times B^3$ follows by
		repeatedly sliding the now two 1-handles past each other and 
		canceling one with the single 2-handle of $A_{m,n}$.
	\end{proof}	
	\subsection{Spin Structures and Orientations}
	{
		We determine how $f$ behaves with respect to 
		elements of $H_1(\partial B_{p,q})$ as well as how $f$
		treats spin structures. Both of these behaviors will be 
		important.  
		\begin{lem}\label{Lemma: Moving Generators}
			Suppose that $L(p,q)$ is given by the linear plumbing
			\[
				\includegraphics[scale=.66]{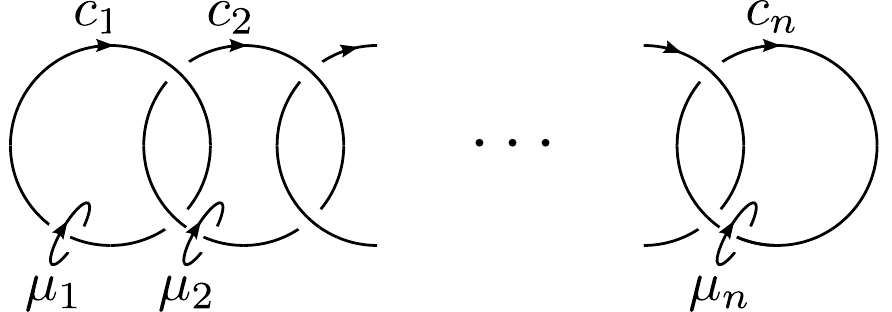}
			\]
			where the $\mu_i$'s are meridians spanning $H_1(L(p,q),\Z)$.  Then
			\[
				H_1(L(p,q),\Z) = \< \mu_1 : (\det C_n)\mu_1=0 \>
			\]
			where 
			$C_i \doteq 
				\begin{pmatrix}
					c_1	& 1  	& 		&   \\
					1	& c_2	& 1		&	\\
						& 1		& \ddots& 1	\\
					 	&		& 1		& c_i
				\end{pmatrix}$ and for $i\in\{2,\ldots,n\}$, 
				$\mu_i=(-1)^{i-1}\left(\det C_{i-1}\right)\mu_1$.
		\end{lem}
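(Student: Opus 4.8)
The plan is to compute $H_1(L(p,q),\Z)$ directly from the linear plumbing and then identify the single generator and the expansion of each $\mu_i$ in terms of it. The plumbing diagram presents $L(p,q)$ as surgery on an $n$-component chain link with framings $c_1,\dots,c_n$, so the first homology is presented by the linking matrix $C_n$ (with $c_i$ on the diagonal and $1$'s on the off-diagonals), acting on the free abelian group generated by the meridians $\mu_1,\dots,\mu_n$. Concretely, the surgery relations are
\begin{align*}
c_1\mu_1 + \mu_2 &= 0,\\
\mu_{i-1} + c_i\mu_i + \mu_{i+1} &= 0 \qquad (2\le i\le n-1),\\
\mu_{n-1} + c_n\mu_n &= 0.
\end{align*}
First I would use the relation at vertex $1$ to solve $\mu_2 = -c_1\mu_1$, and then inductively the relation at vertex $i-1$ to solve $\mu_i = -c_{i-1}\mu_{i-1} - \mu_{i-2}$, expressing every $\mu_i$ as an integer multiple of $\mu_1$; the leftover relation at vertex $n$ becomes a single relation on $\mu_1$.

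The key step is to recognize that the integer coefficients produced by this recursion are exactly the continuant determinants $\det C_{i-1}$ up to sign. I would prove by induction on $i$ that $\mu_i = (-1)^{i-1}(\det C_{i-1})\mu_1$, with the convention $\det C_0 = 1$ (so $\mu_1 = \mu_1$) and $\det C_1 = c_1$ (so $\mu_2 = -c_1\mu_1$, matching the vertex-$1$ relation). For the inductive step, substitute the hypotheses for $\mu_{i-1}$ and $\mu_{i-2}$ into $\mu_i = -c_{i-1}\mu_{i-1} - \mu_{i-2}$ and use the cofactor expansion $\det C_{i-1} = c_{i-1}\det C_{i-2} - \det C_{i-3}$ (Laplace expansion of the tridiagonal matrix $C_{i-1}$ along its last row or column), which produces precisely the sign pattern $(-1)^{i-1}$. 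This handles the formula $\mu_i = (-1)^{i-1}(\det C_{i-1})\mu_1$ for all $i\in\{2,\dots,n\}$.

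Finally, substituting $\mu_n = (-1)^{n-1}(\det C_{n-1})\mu_1$ and $\mu_{n-1} = (-1)^{n-2}(\det C_{n-2})\mu_1$ into the last surgery relation $\mu_{n-1} + c_n\mu_n = 0$ gives
\[
(-1)^{n-2}\bigl(\det C_{n-2}\bigr)\mu_1 + c_n(-1)^{n-1}\bigl(\det C_{n-1}\bigr)\mu_1 = (-1)^{n-1}\bigl(c_n\det C_{n-1} - \det C_{n-2}\bigr)\mu_1 = (-1)^{n-1}\bigl(\det C_n\bigr)\mu_1,
\]
again by cofactor expansion of $C_n$ along its last row. Hence the only relation on the generator $\mu_1$ is $(\det C_n)\mu_1 = 0$, so $H_1(L(p,q),\Z) = \langle \mu_1 : (\det C_n)\mu_1 = 0\rangle$, as claimed. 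I expect the only mild subtlety to be bookkeeping the sign conventions and the base cases of the continuant recursion (and the orientation/sign convention implicit in the surgery presentation so that $\det C_n = \pm p$ is consistent with $[c_1,\dots,c_n] = -p/q$); once those are pinned down the induction is routine.
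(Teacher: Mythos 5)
Your proposal is correct and follows essentially the same route as the paper: both read off the meridian relations from the surgery presentation given by the linking matrix of the chain, solve recursively to express each $\mu_i$ as a multiple of $\mu_1$, and close the induction with the tridiagonal (continuant) recursion $\det C_k = c_k\det C_{k-1}-\det C_{k-2}$, with the last vertex relation yielding $(\det C_n)\mu_1=0$. Your write-up just spells out the base cases and the final substitution a bit more explicitly than the paper does.
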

		\begin{proof}
			Given a Dehn surgery description of a 3-manifold, 
			one obtains a presentation for the first homology in terms 
			of the right handed meridians of the (oriented) framed link
			(see \cite{Gompf-and-Stipsicz} page 165).  In the above 
			case, we find that
			\[
				H_1(L(p,q),\Z) = \< \mu_1,\ldots,\mu_n : 
					\mu_2=-c_1\mu_1,
					\{\mu_{i+1}=-c_i\mu_i-\mu_{i-1}\}_{i=2}^{n-1}, 
					c_n\mu_n=-\mu_{n-1}\>
			\]		
			As $\mu_2=-c_1\mu_1 = (-1)^{2-1}(\det C_{2-1})\mu_1$, 
			the result follows by induction using that 
			\[
				\det C_k = c_k \det C_{k-1} - \det C_{k-2}.
			\]
		\end{proof}
		\begin{remark}\label{Remark: Homology Pullback}
		Lemma \ref{Lemma: Moving Generators} allows us to 
		determine $f^{-1}_*\gamma_0 \in H_1(\partial B_{p,q})$.  
		From Proposition 
		\ref{Proposition: A_m,n Boundary Induction}, we have that 
		a meridian of $-(\sigma_0+1)$-framed unknot
		of figure \ref{Figure: A_m,n Boundary Induction Final Case}
		is carried to $\gamma_0$ in $\partial A_{m,n}$.  
		Similarly, $\mu_0$ is carried to a meridian of
		$-s_0$-framed unknot of Figure 
		\ref{Figure: B_p,q Boundary Induction - Final Case2)}.
		Furthermore, by Corollary \ref{Corollary: pulling back gamma0},
		we have that $\gamma_0 = \pm n\mu_0$ if $\ell\in 2\Z$
		and $\gamma_0=\pm m\mu_0$ if $\ell\in 2\Z+1$
		where we view $\gamma_0$ and $\mu_0$ as their
		respective images in the aforementioned linear 
		plumbings.  Now, by an appropriate choice of 
		identification of the plumbings of Figures 
		\ref{Figure: A_m,n Boundary Induction Final Case} 
		and \ref{Figure: B_p,q Boundary Induction - Final Case2)}
		we can always assume that 
		\[
			f^{-1}_*\gamma_0 = \left\{
				\begin{array}{ll}
					+n\mu_0	&	\mbox{if $\ell\in 2\Z$,}\\
					+m\mu_0	&	\mbox{if $\ell\in 2\Z+1$}.
				\end{array}\right.
		\]
		Indeed, if as defined, $f^{-1}_*\gamma_0$ was $-m\mu_0$ 
		or $-n\mu_0$, we can simply flip one pluming over before
		making the identification and redefine $f$ accordingly!
		\end{remark}
		Recall that $L(p^2,pq-1)$ admits a unique
		spin structure if $p$ is odd and two spin structures
		if $p$ is even.  In the former case, $f$ clearly maps
		the unique spin structure to itself.  In the later case,
		we investigate how $f$ behaves on spin structures by looking 
		at characteristic sublinks:  
		\begin{definition}[\cite{Kaplan}, Definition 1.10]
			For a framed link $L\subset S^3$, a sublink $L'\subset L$
			is characteristic if for each $K\subset L$,
			\[
				\lk(K,L') = \lk(K,K) \mod 2.
			\]
		\end{definition}
		When $M^3$ is given as (integral) surgery on $L$, spin structures on 
		$M$ are in bijection with characteristic sublinks of $L$.  	
		Furthermore, fixing a spin structure and thus a
		characteristic sublink of $M$, one can trace 
		where that structure goes under a diffeomorphism specified
		via handle moves / blow-ups by tracing how the sublink evolves
		under those moves (see \S5.7 of \cite{Gompf-and-Stipsicz}).	
		To accomplish this, we adopt the following
		notation to specify $(M,\mathfrak{s})$ 
		for $\mathfrak{s}\in \cal{S}(M)$ - the
		set of spin structures on $M$:
		\begin{notation}
			If $M^3$ is given by integral surgery on a framed link
			$L = K_1^{f_1}\cup \ldots \cup K_N^{f_N}$ with framings $f_i\in \Z$
			and $\mathfrak{s}\in \cal{S}(M)$ is a spin structure with associated 
			characteristic sublink  $L'\subset L$, then we denote
			\[
				(M,\mathfrak{s}) 
					= K_1^{(f_1;t_1)}\cup \ldots \cup K_{N}^{(f_N;t_N)}
			\]
			where each $t_i \in \Z/2\Z = \{1,-1\}$ 
			satisfies $t_i=-1$ if and only if $K_i\in L'$.				
		\end{notation}
		\noindent From \cite{Gompf-and-Stipsicz}, when sliding 
		$K_i$ over $K_j$, $(f_i;t_i)\mapsto (f_i+f_j\pm 2\lk(K_i,K_j);t_i)$ and
		$(f_j;t_j)\mapsto (f_j;t_it_j)$. Furthermore, 
		blowing-up corresponds to the addition of
		$(\pm1;-1)$-decorated unknot. From these two observations, we
		immediately conclude the following lemma.
		\begin{lem}\label{Lemma: Spinning}
			Suppose that a band of $k$ strands has $r$ strands contained
			in the characteristic sublink of a spin structure $\mathfrak{s}$ on $M$
			and the remaining $k-r$ strands 
			not in the characteristic sublink, then adding $-s_i$-full 
			twists to the band, through the introduction of a canceling pair,
			effects the characteristic sublink as in 
			Figure \ref{Figure: Characteristic Twisting)}
			\begin{figure}[!ht]
				\centering
					\includegraphics[scale=.6]{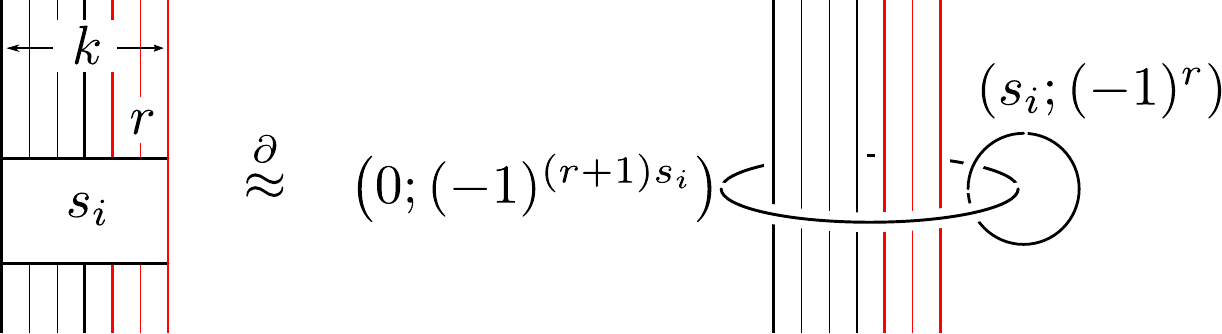}
				\caption{\small Tracing characteristic sublinks
						when introducing a canceling pair.}
						\label{Figure: Characteristic Twisting)}
			\end{figure}
			with no change to the original characteristic sublink and with
			framings within the band changing in the obvious way.
		\end{lem}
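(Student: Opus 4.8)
The plan is to reduce the statement entirely to the two local rules already quoted from \cite{Gompf-and-Stipsicz} — namely that sliding $K_i$ over $K_j$ sends $(f_i;t_i)\mapsto(f_i+f_j\pm 2\lk(K_i,K_j);t_i)$ and $(f_j;t_j)\mapsto(f_j;t_it_j)$, and that blowing up adjoins a $(\pm1;-1)$-decorated unknot — and then bookkeep how these rules act during the specific sequence of moves used to introduce the $-s_i$ full twists into the band. First I would fix notation: write the band as $K_1,\ldots,K_k$ with exactly $K_1,\ldots,K_r$ lying in the characteristic sublink $L'$ (so $t_1=\cdots=t_r=-1$) and $K_{r+1},\ldots,K_k$ outside it (so $t_{r+1}=\cdots=t_k=+1$). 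The introduction of the canceling $1$-/$2$-handle pair that realizes the twisting adds an unknotted $0$-framed circle $C$ through which all $k$ strands pass; since we are only changing framings on the band, $C$ is not part of any characteristic sublink, so it carries decoration $(0;+1)$.

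Next I would run the slides. To put $-s_i$ full twists in the band, one slides each $K_a$ of the band over $C$ a total of $s_i$ times (or equivalently slides $C$'s role around), and the net effect on the $2$-handle decorations is computed by iterating the Gompf–Stipsicz rule. Sliding $K_a$ over $C$ leaves $t_a$ unchanged (because $t_C=+1$, so $t_a t_C=t_a$), which gives the assertion that the original characteristic sublink of the band is unchanged and the framings inside the band change "in the obvious way." The only subtlety is the decoration that ends up on the circle created by the twisting move (the image of $C$, which after the slides becomes the small linking unknot drawn in Figure \ref{Figure: Characteristic Twisting)}): its $t$-value is the product of all the $t_a t_C$ factors picked up, i.e. the product over the $r$ characteristic strands of $(-1)$ with multiplicity equal to the number of times each is slid over it. So the parity is governed by $r\cdot s_i \pmod 2$; combined with the $(\pm1;-1)$ contributions from any blow-ups/blow-downs invoked to simplify the resulting twist region, this yields exactly the decoration shown in the figure. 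I would then simply observe that carrying this through for general $r$ and $s_i$ reproduces Figure \ref{Figure: Characteristic Twisting)} case by case (the two cases being $r s_i$ even versus odd, equivalently whether the new linking unknot is characteristic).

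The main obstacle — really the only nonroutine point — is getting the parity of the decoration on the newly introduced linking circle correct, since it depends jointly on $r$ (how many band strands are characteristic) and on $s_i$ (how many twists, hence how many slides), and one must be careful that slides of a characteristic strand over a non-characteristic circle flip the circle's decoration but not the strand's. Everything else is a direct, mechanical application of the two quoted transformation rules, so once the local parity computation is pinned down the lemma follows immediately; I would present the computation in the two parity cases and point to Figure \ref{Figure: Characteristic Twisting)} for the resulting pictures, noting explicitly that the framings within the band transform just as they do in the spinless setting of Proposition \ref{Proposition: B_p,q Boundary Induction}.
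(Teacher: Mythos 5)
Your overall strategy---reduce the lemma to the two quoted rules (the effect of a handle slide on decorations, and the $(\pm1;-1)$ rule for blow-ups) and then bookkeep---is exactly what the paper does; the paper offers no written argument beyond asserting the lemma is immediate from those two observations. The problem is that your bookkeeping goes wrong at the one place the lemma actually has content, namely the decorations carried by the two unknots introduced with the canceling pair. First, the claim that the new $0$-framed circle $C$ ``is not part of any characteristic sublink'' is unjustified and false in general: once the pair is introduced, the memberships of \emph{both} new components are forced by the characteristic conditions. With $h$ the $\mp s_i$-framed canceling $2$-handle (a meridian of $C$, not linking the band), the condition $\lk(C,L')\equiv \lk(C,C)=0$ reads $r+[h\in L']\equiv 0$, so $h$ lies in $L'$ iff $r$ is odd; and the condition for $h$ reads $[C\in L']+s_i[h\in L']\equiv s_i$, so $C$ lies in $L'$ iff $s_i(r+1)$ is odd, i.e.\ iff $s_i$ is odd and $r$ is even. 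Second, the mechanics are misdescribed: the $-s_i$ full twists are not produced by sliding each strand over the $0$-framed circle $s_i$ times (a slide over a $0$-framed unknot inserts no twists); rather each strand is slid exactly once over $h$, whose framing $\mp s_i$ is what creates the twists and the framing change $\mp s_i$ per strand, and each such slide multiplies the decoration of $h$ by that strand's $t$-value, yielding the factor $(-1)^{r}$, with no dependence on $s_i$. Consequently your asserted parity rule ``governed by $r\cdot s_i \pmod 2$'' is wrong on either reading of which circle it refers to: for $r$ odd and $s_i$ even the $\mp s_i$-framed circle is characteristic although $rs_i$ is even, and for $r$ even, $s_i$ odd the $0$-framed circle is characteristic although $rs_i$ is even. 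Since the entire content of Figure \ref{Figure: Characteristic Twisting)} is precisely these decorations, this is a genuine gap, not a presentational one; note also that the paper's subsequent recursion $T_{j}=\left(-T_{j-1}t_1^{r_{j-1}}\right)^{s_{j-1}}T_{j-2}$ has exponent exactly $s_{j-1}(r+1)$ in the sense above, corroborating the parities computed here rather than $rs_{j-1}$.

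What you do get right is that the band strands keep their decorations (the slid component retains its $t$-value; only the component slid over changes) and that the framings change as in the unspun setting. A cleaner way to finish, which avoids slide-by-slide bookkeeping entirely, is to observe that introducing the canceling pair does not change the $3$-manifold, that characteristic sublinks of a given integral surgery diagram biject with spin structures, and hence that the new characteristic sublink is the unique one agreeing with the old sublink on the original components; its values on $C$ and $h$ are then read off from their own characteristic equations exactly as displayed above, which is what the figure records.
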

		\noindent
		Thus, we can refine Proposition \ref{Proposition: B_p,q Boundary Induction}
		to carry a fixed spin structure on $\partial B_{p,q}$ to each 
		$\partial B_{p,q}^i$.
		\begin{lem}
			Let $\mathfrak{s}\in \cal{S}(\partial B_{p,q})$ be specified by the 
			pair $(t_0,t_1)\in \Z/2\Z\times \Z/2\Z$, 
			then $\mathfrak{s}$ corresponds to
			the spin structure on $\partial B_{p,q}^i$ in 
			Figure \ref{Figure: B_p,q spin structures} where $T_0=t_0$ and
			for $1\leq i\leq \ell+1$,
			$T_i=(-1)^{1+\det A_{i-1}}(-t_0)^{\rho_{\ell+1-i}}(t_1)^{p\det A_{i-1}+ir_i}$	 
			such that $A_i$ and $\rho_{\ell+1-i}$
			are as defined in Lemma \ref{Lemma: Defining A(p-q,q)}.
		\begin{figure}[!ht]
			\centering
				\includegraphics[scale=.5]{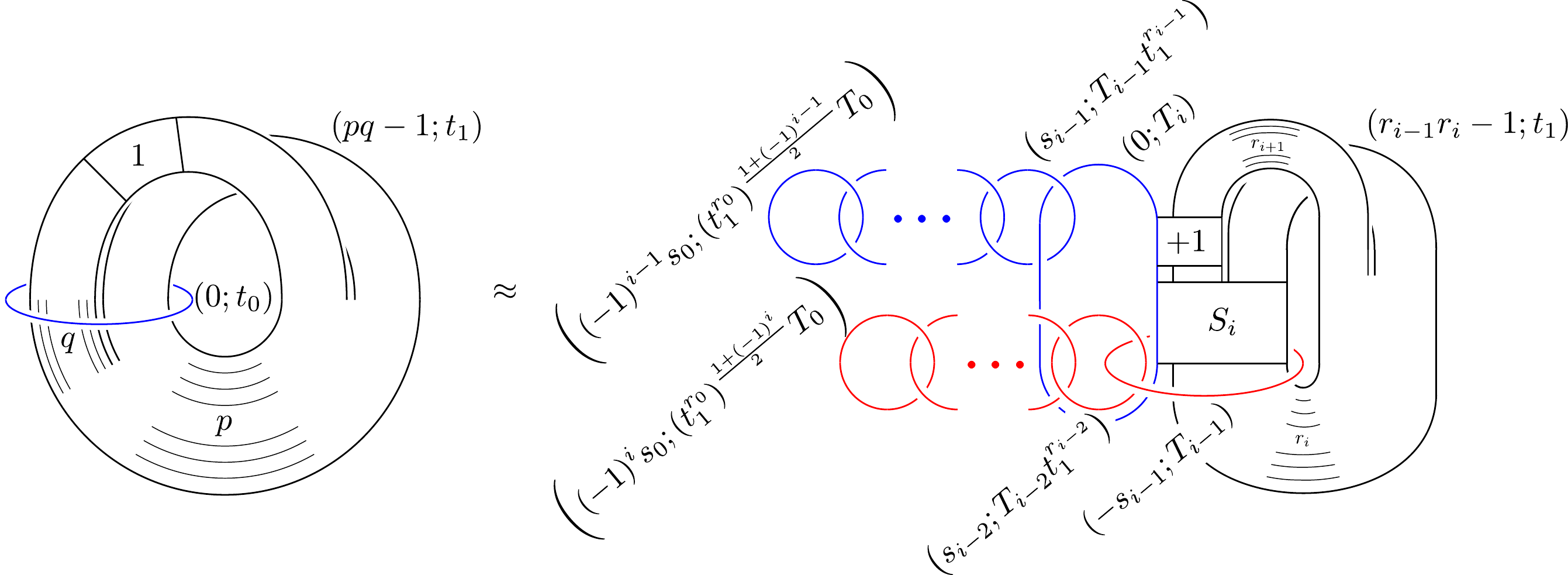}
			\caption{\small A fixed spin structure 
			on $\partial B_{p,q}$ and $\partial B_{p,q}^i$.}
			\label{Figure: B_p,q spin structures}
		\end{figure}
		\end{lem}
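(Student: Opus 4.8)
The strategy is to induct on $i$ exactly as in Proposition \ref{Proposition: B_p,q Boundary Induction}, but now carrying along the data of a characteristic sublink, i.e.\ the decorations $(t_0, t_1)$ on the two framed components of $\partial B_{p,q}^0 = \partial B_{p,q}$ and tracking how they propagate to $\partial B_{p,q}^i$. Recall that $\partial B_{p,q}^i$ consists (after surgering the 1-handle) of the braid-closure $K_1^i$ on $r_i$ strands together with an $-s_{i-1}$-framed unknot; the spin decoration on the former is $T_i$ (the exponent we must verify) and on the latter is $T_{i-1}$-ish bookkeeping. At each step of the induction I would redo the three moves of Proposition \ref{Proposition: B_p,q Boundary Induction}: surger the $1$-handle and introduce a canceling $1$/$2$-handle pair to absorb the $s_i$ full twists, slide the $-s_{i-1}$-framed $2$-handle under the new $1$-handle, then isotope $K_1^i$ into a braid on $r_{i+1}$ strands. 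Each of these moves has a prescribed effect on the decorations by the two bullet-point rules quoted from \cite{Gompf-and-Stipsicz}: a handle slide $K_i$ over $K_j$ sends $(f_j; t_j) \mapsto (f_j; t_i t_j)$ and $(f_i; t_i)\mapsto(f_i + f_j \pm 2\lk; t_i)$, and a blow-up inserts a $(\pm 1; -1)$-decorated unknot. Combined with Lemma \ref{Lemma: Spinning}, which already packages the effect of the canceling-pair/twisting move on a banded family of strands, the inductive step reduces to multiplying together the sign contributions.

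**The computation at the inductive step.** Going from $B_{p,q}^i$ to $B_{p,q}^{i+1}$, the decoration on the band of $r_i$ strands is $T_i$; the band gets split into $r_{i+1}$ strands traversing $s_{i+1}$ times plus $r_{i+2}$ traversing one extra time, and $K_1^i$ is slid over the new $1$-handle-turned-$2$-handle a number of times dictated by $s_i$ and the linking number $r_i$. By Lemma \ref{Lemma: Spinning} the new decoration on the $r_{i+1}$-strand band is a product of $T_i$ (raised to a power counting how many of the new strands inherit the old decoration) times $(-1)$ raised to a power counting the blow-up insertions times $(t_1)$ raised to a power recording slides over the central component. The bookkeeping is: the exponent of $-t_0$ advances from $\rho_{\ell+1-i}$ to $\rho_{\ell-i}$ because each strand of the band that was counted $\rho_{\ell+1-i}$ times now gets recounted $\sigma_{\ell-i}$-many times plus a remainder, which is precisely the Euclidean recursion $\rho_{\ell-i} = \sigma_{\ell-i}\rho_{\ell+1-i} + \rho_{\ell+2-i}$; similarly the exponent of $t_1$ advances by $p\det A_i - p\det A_{i-1} + (i+1)r_{i+1} - i r_i$, and the sign $(-1)^{1 + \det A_i}$ flips relative to $(-1)^{1+\det A_{i-1}}$ exactly when $\det A_i$ and $\det A_{i-1}$ have opposite parity, which is controlled by the matrix recursion $\det A_i = \sigma_i \det A_{i-1} - \det A_{i-2}$ from Lemma \ref{Lemma: Defining A(p-q,q)}. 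Matching these three pieces against the formula for $T_{i+1}$ is a finite, purely arithmetic verification.

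**Main obstacle.** The genuinely delicate point is keeping the $\mathbb{Z}/2\mathbb{Z}$ bookkeeping consistent across the two sources of sign ambiguity: the "$\pm$" in the handle-slide rule $(f_i;t_i)\mapsto(f_i+f_j\pm 2\lk;t_i)$ (which in our setting must be pinned down by orienting the strands coherently as they wind around the $1$-handle) and the "$\pm 1$" in each blow-up. One must check that, with the orientation conventions fixed once and for all in Section \ref{Section: Boundary Diffeomorphisms}, the number of negative contributions accumulated over the $r_i$ strands of the band and the parity of blow-ups together assemble into exactly the stated exponent of $(-1)$, namely $(-1)^{1+\det A_{i-1}}$. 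Since $t_0, t_1 \in \{1,-1\}$, all exponents only matter modulo $2$, so the verification ultimately rests on parity identities among $\rho_j$, $r_j$, $\det A_j$, and $p$ — for instance that $p \det A_{i-1} + i r_i$ has the correct parity, using $p = r_{-1}$ and the interlacing of the Euclidean sequences. I expect this parity reconciliation, rather than any topological subtlety, to be where the real work lies; the handle calculus itself is a verbatim rerun of Proposition \ref{Proposition: B_p,q Boundary Induction} with decorations attached. The base case $i=0$ is immediate since $\partial B_{p,q}^0 = \partial B_{p,q}$, $T_0 = t_0$, and $\det A_{-1}$, $\det A_0$ take their initial values making the formula trivially correct there.
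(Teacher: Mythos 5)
Your overall strategy is the paper's: rerun Proposition \ref{Proposition: B_p,q Boundary Induction} while carrying the characteristic-sublink decorations via Lemma \ref{Lemma: Spinning}, observe that only the parities of the exponents matter, and close an induction using the Euclidean recursion for the $\rho_i$'s. But as written, the sketch stops short of the step that actually constitutes the proof. The paper first extracts from Lemma \ref{Lemma: Spinning} an explicit \emph{second-order} recursion, $T_{j}=\bigl(-T_{j-1}t_1^{r_{j-1}}\bigr)^{s_{j-1}}T_{j-2}$ with $T_0=t_0$: the decoration on the newly created chain component depends on the decorations of the \emph{previous two} chain components (the chain of $\mp s_k$-framed unknots grows at every stage, so $\partial B_{p,q}^i$ is not just ``the braid plus one $-s_{i-1}$-framed unknot''), while the braid component retains the decoration $t_1$ throughout. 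Your bookkeeping treats the update as a first-order statement (``the exponent of $-t_0$ advances from $\rho_{\ell+1-i}$ to $\rho_{\ell-i}$'') and never isolates the dependence on $T_{i-1}$, so the induction you propose cannot be closed from what you have written down.

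The concrete identities you cite would also derail the ``purely arithmetic verification'' you defer to. No blow-ups occur in Proposition \ref{Proposition: B_p,q Boundary Induction} (they enter only in the $A_{m,n}$ induction and in passing to the final plumbings), so the sign $(-1)^{1+\det A_{i-1}}$ is not a count of $(\pm1;-1)$-decorated unknots; it is produced by the $s_{j-1}$-fold factor $(-T_{j-1}t_1^{r_{j-1}})^{s_{j-1}}$ coming from the twisting move of Lemma \ref{Lemma: Spinning}. Likewise the relevant Euclidean step is $\rho_{\ell-i}=s_i\,\rho_{\ell+1-i}+\rho_{\ell+2-i}$ (equivalently $\rho_{\ell+1-j}=s_{j-1}\rho_{\ell+2-j}+\rho_{\ell+3-j}$), not $\rho_{\ell-i}=\sigma_{\ell-i}\rho_{\ell+1-i}+\rho_{\ell+2-i}$, and the determinant recursion from Lemma \ref{Lemma: Defining A(p-q,q)} is $\det A_i=(-1)^{i+1}s_i\det A_{i-1}-\det A_{i-2}$, not $\sigma_i\det A_{i-1}-\det A_{i-2}$. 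Once the recursion for $T_j$ is in hand, writing $T_j=(-1)^{a_j}t_0^{b_j}t_1^{c_j}$ yields $a_j=s_{j-1}(a_{j-1}+1)+a_{j-2}$, $b_j=s_{j-1}b_{j-1}+b_{j-2}$, $c_j=s_{j-1}(c_{j-1}+r_{j-1})+c_{j-2}$, and matching $b_j$ against $\rho_{\ell+1-j}$ and the other exponents modulo $2$ is exactly the finite check you anticipated; without the recursion, and with the misindexed identities above, that check does not go through as stated.
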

		\begin{proof}
			Starting with $(t_0,t_1)$ on $\partial B_{p,q}$, Lemma 
			\ref{Lemma: Spinning} combined with Proposition 
			\ref{Proposition: B_p,q Boundary Induction} gives 
			that the $T_j$'s in Figure \ref{Figure: B_p,q spin structures}
			are defined recursively by $T_{-1}\doteq 0$, $T_{0}\doteq t_0$,
			and $T_{j}=\left(-T_{j-1}t_1^{r_{j-1}}\right)^{s_{j-1}}T_{j-2}$.
			To see that the closed form for $T_j$ is as claimed, note that
			we can assume $T_j = (-1)^{a_j}(t_0)^{b_j}(t_1)^{c_j}$ for
			sequences $\{a_j\}, \{b_j\}, \{c_j\}\subset\Z$ which only need
			to be determined to their respective parities.  
			Then, the recursion on $T_j$ descends to 
			\[
				\begin{array}{ccc}
					a_{-1} \doteq 0		& b_{-1}\doteq 0 	& c_{-1}\doteq 0\\
					a_0 \doteq 0 & b_0\doteq 1 & c_0 \doteq 0 \\
					a_j=s_{j-1}(a_{j-1}+1)+a_{j-2}. 
						& b_j=b_{j-1}b_{j-1}+b_{j-2}. 
						& c_j=s_{j-1}(c_{j-1}+r_{j-1})+c_{j-2}.
				\end{array}
			\]
			By noting that $\rho_{\ell+1}=1$, $\rho_{\ell}=s_0$ and 
			$\rho_{\ell+1-j}=\rho_{\ell+1-(j-1)}s_{j-1}+\rho_{\ell+1-(j-2)}$
			the result follows by induction on $j$.
		\end{proof}
		\begin{remark}
			By Lemma \ref{Lemma: Defining A(p-q,q)}, we have that 
			$\det A_{\ell} = \pm d$ for $d$ defined therein.
			Thus, 
			\[
				T_{\ell+1} = 
						(-1)^{1+d}(-t_0)^{m}(t_1)^{p d+\ell+1}.
			\]
			If $p\in 2\Z$, then $t_1=-1$ for both spin structures on 
			$\partial B_{p,q}$ and we can further reduce $T_{\ell+1}$ 
			to $(-1)^{c+\ell} t_0$ 
			(as $m$ is necessarily odd and the parities of $c$ and $d$ 
			always oppose each other in this case). 
			Therefore, when $p\in2\Z$, 
			we can measure which spin structure $\mathfrak{s}$ gives on 
			$\partial B_{p,q}$ in the linear plumbing of Figure 
			\ref{Figure: B_p,q Boundary Induction - Final Case2)} by
			noting that the $-r_\ell$-framed unknot will be in the characteristic
			sublink associated to $\mathfrak{s}$ if and only 
			if $(-1)^{c+\ell} t_0=-1$.
			Of course, we can also measure this by looking at the $-s_0$-framed
			unlink.  However, to see which spin structure is induced on 
			$\partial A_{m,n}$, it is convenient to look at $-r_\ell$.
			To that end, we have
		\end{remark}
		\begin{proposition}\label{Proposition: Induced Spin Structures}
			Let $\mathfrak{s}$ be the spin structure on 
			$\partial B_{p,q}$ specified by $(t_0,t_1)$,   
			then $f_*(\mathfrak{s})$ is the spin structure on $\partial A_{m,n}$
			specified by 
			\[
				(v_0,v_1)
					=\left(\frac{(-1)^{c+\ell} t_0+t_1+(-1)^{c+\ell+1} t_0t_1+1}{2},t_1\right)
			\]  
			where the pair $(v_0,v_1)\in \Z/2\Z\times\Z/2\Z$ is analogously
			defined for $\partial A_{m,n}$ as the
			pair $(t_0,t_1)$ is in Figure \ref{Figure: B_p,q spin structures}
			for $\partial B_{p,q}$.
		\end{proposition}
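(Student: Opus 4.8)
The plan is to track how the spin structure $\mathfrak{s}$, recorded as the decorated link $\partial B_{p,q}^{\ell+1}$ with its characteristic sublink, is carried by $f$ through the identification of the two linear plumbings of Figures \ref{Figure: B_p,q Boundary Induction - Final Case2)} and \ref{Figure: A_m,n Boundary Induction Final Case}, and then pulled backward through Proposition \ref{Proposition: A_m,n Boundary Induction} to $\partial A_{m,n}$. Since $f$ is the composite of the explicit diffeomorphisms of Propositions \ref{Proposition: B_p,q Boundary Induction} and \ref{Proposition: A_m,n Boundary Induction}, and each of those is built from handle slides, surgeries, canceling pairs and blow-ups, I can apply the slide/blow-up rules for decorated links recalled just before Lemma \ref{Lemma: Spinning} at every stage. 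The work is bookkeeping, not conceptual, and it parallels the computation already carried out for the $T_i$'s on the $B$-side.

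First I would fix the identification of the two plumbings exactly as in the Proof of Theorem \ref{Theorem: Boundary Diffeomorphism} (matching Euler classes $[-s_0,s_1,\ldots,\pm r_\ell,1,\mp r_\ell,\ldots,s_0]$ via the algebra of Lemma \ref{Lemma: intersection form algebra} and Remark \ref{Remark: B_p,q 0-framing preserved}), being careful that this is the same identification used in Remark \ref{Remark: Homology Pullback} so that $f^{-1}_*\gamma_0$ has the stated sign. Then, under this identification, the information of $\mathfrak{s}$ on the plumbing is entirely encoded by which of the two ``end'' unknots lies in the characteristic sublink; the preceding remark shows that on the $B$-side the $-r_\ell$-framed unknot is characteristic precisely when $(-1)^{c+\ell}t_0=-1$. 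Reading off the corresponding decoration on the $A$-side plumbing and then running the diffeomorphisms of Proposition \ref{Proposition: A_m,n Boundary Induction} in reverse — surgering, sliding the $-\sigma_\ell$- and $-\rho_\ell$-framed handles back, undoing the blow-up, and unwinding the chain of half-twists — produces a decorated $A_{m,n}^{\ell+1}$, and hence the pair $(v_0,v_1)$. The recursion governing how the $A$-side decorations evolve is structurally identical to the $B$-side recursion for the $T_i$'s (Lemma \ref{Lemma: Spinning}), so I would set up the analogous sequences $\{a_j\},\{b_j\},\{c_j\}$ for the $\sigma_j$'s and $\rho_j$'s and solve them in closed form, then substitute $j=\ell+1$.

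The only subtlety — and the main obstacle — is matching conventions across the identification: the two propositions introduce canceling pairs, blow-ups, and chain-windings with definite signs, and the formula for $v_0$ as written, $v_0 = \frac{(-1)^{c+\ell}t_0 + t_1 + (-1)^{c+\ell+1}t_0 t_1 + 1}{2}$, is visibly the indicator that takes the value $-1$ exactly in the combinations of $(t_0,t_1)$ for which the relevant end-unknot on the $A$-side is characteristic (note $(-1)^{c+\ell}t_0 + t_1 - (-1)^{c+\ell}t_0t_1 + 1 \in \{-2,2\}$ always, being a $\pm1$-valued expression after dividing by $2$). So the real content is to verify that the characteristic-sublink membership of that unknot transforms correctly under $f$, and in particular that the sign twist $(-1)^{c+\ell}$ carried over from the $B$-side remark survives the plumbing identification with no further correction. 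I would confirm this by tracing a single generator — say the meridian $\mu_0$ versus $\gamma_0$ as in Remark \ref{Remark: Homology Pullback}, using $f^{-1}_*\gamma_0 = +n\mu_0$ or $+m\mu_0$ — through Lemma \ref{Lemma: Moving Generators}, whose $\det C_i$ recursion pins down the $\Z/2$ self-linking data of the end unknots relative to $\mu_1$. Once the membership criterion is pinned down, $v_1 = t_1$ is immediate (the parity $p$, equivalently whether $L(p^2,pq-1)$ has one or two spin structures, is unchanged, and $f$ is orientation-preserving), and the displayed formula for $(v_0,v_1)$ follows by assembling the closed-form solution of the $A$-side recursion at index $\ell+1$ exactly as the $T_{\ell+1}$ computation was assembled on the $B$-side.
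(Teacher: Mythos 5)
Your plan is essentially the argument the paper intends: the paper gives no separate written proof of this proposition, leaving it to the machinery already in place --- Lemma \ref{Lemma: Spinning}, the closed-form computation of the $T_j$'s, and the preceding remark identifying membership of the $-r_\ell$-framed unknot in the characteristic sublink with the condition $(-1)^{c+\ell}t_0=-1$ --- followed by exactly the reverse tracking through the moves of Proposition \ref{Proposition: A_m,n Boundary Induction} under the plumbing identification fixed as in Remark \ref{Remark: Homology Pullback}, which is what you describe. Your supporting checks (the $\pm1$-valuedness of the displayed $v_0$, the parity considerations giving $v_1=t_1$, and the care about the sign conventions in the identification) are consistent with that intended argument, so the proposal is correct and takes the same route.
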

	}
	\noindent 
	Using Proposition \ref{Proposition: Induced Spin Structures},
	we can deduce Corollary \ref{f extends} by carving.  
	Carving is a powerful tool for understanding handle decompositions
	(see, for instance, \cite{Akbulut-Carving} and \cite{Akbulut-book}). 
	The fact that carving $f(\mu_1)$ gives $S^1\times B^3$ is enough
	to extend $f$ to a diffeomorphism between $B_{p,q}$
	and $A_{m,n}$:	
	\begin{proof}[Proof of Corollary \ref{f extends}]
		By Theorem \ref{Theorem: Boundary Diffeomorphism},
		there exists $f:\partial B_{p,q} \to \partial A_{m,n}$
		satisfying that $f$ carries the belt sphere, 
		$\mu_1$, of the single 2-handle	in $B_{p,q}$ to 
		an unknot in $\partial A_{m,n}$.  Remarks
		\ref{Remark: B_p,q 0-framing preserved} and
		\ref{Remark: A_m,n 0-framing preserved} show that the 0-framing
		on $\mu_1$ determined by the cocore of the 2-handle is
		preserved as well.  Therefore, $f$ can be defined across the 
		cocore of the 2-handle in $B_{p,q}$.  Thus, we can view 
		$f$ as giving a diffeomorphism, $f_0$, between the result of
		0-surgery on $\mu_1\subset \partial B_{p,q}$ to that
		of $f(\mu_1)=\gamma\subset \partial A_{m,n}$.  As carving both 
		$\mu_1$ and $f(\mu_1)$ gives $S^1\times B^3$,
		$f_0$ is a diffeomorphism of $S^1\times S^2$ 
		to itself which extends uniquely 
		over $S^1\times B^3$ since we can verify
		that $f_0$ doesn't 
		intertwine the spin structures of $S^1\times S^2$
		by examining 
		Proposition \ref{Proposition: Induced Spin Structures}.
	\end{proof}
	
	\section{Homotopy Invariants of the Induced Contact Structures}
		\label{Section: Homotopy Invariants}
	{
	In this section, we compare the homotopy invariants of
	the contact structures induced by $\widetilde{J}_{m,n}$
	on $\partial A_{m,n}$ to those induced by the Stein 
	structures of $B_{p,q}$.  The latter are known to induce
	contact structures which are
	contactomorphic to the standard contact structure,
	$(L(p^2,pq-1),\bar{\xi}_{st})$ - thus Lisca's classification 
	result (Theorem \ref{Lens Space Symplectice Filling Classification})
	applies. For identifying tight contact structures on lens spaces,
	it is enough to know that the two contact structures in question are 
	homotopic up to contactomorphism.  Indeed, the following result of Honda's ensures this.	
	\begin{thm}[\cite{Honda-ClassificationI}, Proposition 4.24]
		\label{Tight + Homotopic = Isotopic}
		The homotopy classes of the tight contact structures of $L(p,q)$
		are all distinct.
	\end{thm}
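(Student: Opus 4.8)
The statement is purely three–dimensional: it asks that distinct tight contact structures on $L(p,q)$ be non‑homotopic as oriented $2$‑plane fields. The plan is to reduce it to a computation of the two standard homotopy invariants of the underlying plane fields. Recall that on a rational homology $3$–sphere $M$ an oriented $2$–plane field is determined up to homotopy by the pair $(\mathfrak{s},d_3)$, where $\mathfrak{s}\in\mathrm{Spin}^c(M)$ is the induced $\mathrm{spin}^c$ structure and $d_3\in\Q$ is the Gompf invariant (well defined with no indeterminacy, since $H^1(M;\Q)=0$). So it suffices to show that distinct tight contact structures on $L(p,q)$ realize distinct pairs $(\mathfrak{s},d_3)$.

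First I would invoke the structural input behind Honda's count: each tight contact structure on $L(p,q)$ is Stein fillable, arising by Legendrian surgery on a linear chain of Legendrian unknots $K_0\cup\dots\cup K_k$ realizing the negative–definite linear plumbing $Q=[a_0,\dots,a_k]$ associated to $-p/q$ (all $a_i\le-2$), with $\tb(K_i)=a_i+1$; the contact structure depends only on the vector $\vec r=(\mathrm{rot}(K_0),\dots,\mathrm{rot}(K_k))$ of rotation numbers, subject to $\mathrm{rot}(K_i)\equiv a_i\pmod 2$ and $|\mathrm{rot}(K_i)|\le-a_i-2$, and distinct admissible $\vec r$ give non‑isotopic tight contact structures. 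I take Honda's convex–surface classification — that these are all of them and are pairwise distinct — as given; the point needed here is only that each $\xi_{\vec r}$ comes equipped with the Stein filling $X=X_Q$ above.

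Next, compute the homotopy invariants of $\xi_{\vec r}$ from $X$. By Gompf's formulas, $c_1(X,J)$ is Poincar\'e--Lefschetz dual to $\sum_i \mathrm{rot}(K_i)\,[\mathrm{cocore}_i]$, so, identifying $H^2(L(p,q);\Z)$ with $\Z^{k+1}/Q\Z^{k+1}$, the class $c_1(\xi_{\vec r})$ is represented by $\vec r$ and $\mathfrak{s}_{\xi_{\vec r}}$ is the $\mathrm{spin}^c$ structure on $M$ obtained by restricting the unique $\mathrm{spin}^c$ structure on $X$ with first Chern class $\vec r$; moreover
\[
 d_3(\xi_{\vec r})\;=\;\tfrac14\bigl(\vec r^{\,T}Q^{-1}\vec r-2\chi(X)-3\sigma(X)\bigr)\;=\;\tfrac14\bigl(\vec r^{\,T}Q^{-1}\vec r+k-1\bigr),
\]
since $\chi(X)=k+2$ and $\sigma(X)=-(k+1)$, and $\vec r^{\,T}Q^{-1}\vec r$ is the legitimate rational self‑intersection of $c_1(X,J)$ (its image in $H^2(M;\Q)$ vanishes). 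Thus, up to an additive constant depending only on $(p,q)$, $d_3(\xi_{\vec r})=\tfrac14\,\vec r^{\,T}Q^{-1}\vec r$.

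Finally, suppose $\xi_{\vec r}$ and $\xi_{\vec r'}$ are homotopic as plane fields. Equality of $\mathrm{spin}^c$ structures forces $\vec r-\vec r'\in Q\Z^{k+1}$ (two extensions over $X$ restrict to the same $\mathrm{spin}^c$ structure on $M$ precisely when they differ by $\ker(H^2(X;\Z)\to H^2(M;\Z))=Q\Z^{k+1}$); write $\vec r'=\vec r+Q\vec v$ with $\vec v\in\Z^{k+1}$. Equality of $d_3$ then gives $\vec r'^{\,T}Q^{-1}\vec r'=\vec r^{\,T}Q^{-1}\vec r$, which expands to $\vec v^{\,T}Q\vec v=-2\langle\vec v,\vec r\rangle$, equivalently $\langle\vec v,\vec r+\vec r'\rangle=0$. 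Since $Q$ is negative definite, $\vec v^{\,T}Q\vec v\le0$ with equality exactly when $\vec v=0$; so the claim reduces to ruling out a nonzero $\vec v$ for which the entries $(Q\vec v)_i=a_iv_i+v_{i-1}+v_{i+1}$ (each a difference of two admissible rotation numbers, hence $|(Q\vec v)_i|\le-2a_i-4$) satisfy $\langle\vec v,\vec r+\vec r'\rangle=0$. I expect this last step to be the main obstacle: it is an arithmetic statement about ``short'' vectors of the negative–definite plumbing lattice, to be handled by an induction along the chain, comparing $(Q\vec v)_i$ against the bound $-2a_i-4$, in the spirit of the linear–plumbing computations of Lisca \cite{Lisca-LensFillings}. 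Once it is known that $\vec v=0$ is the only possibility, we conclude $\vec r=\vec r'$, hence $\xi_{\vec r}=\xi_{\vec r'}$, proving the proposition. (The extreme case $a_0=\dots=a_k=-2$, i.e.\ $L(p,p-1)$, is vacuous: the only admissible $\vec r$ is $0$ and there is a unique tight contact structure.)
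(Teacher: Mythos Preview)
The paper does not prove this theorem: it is quoted verbatim from Honda's classification paper and used as a black box (it appears immediately after the sentence ``The following result of Honda's ensures this''). There is therefore no proof in the paper to compare your proposal against.

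On its own merits, your outline is a natural strategy---parametrize the tight structures by rotation vectors $\vec r$ via the Stein filling on the linear plumbing, then separate them by the pair $(\mathfrak{s}_\xi,d_3(\xi))$---and the reductions you carry out are correct. But you yourself identify the gap: after reducing to the statement that admissible $\vec r,\vec r'$ with $\vec r'-\vec r=Q\vec v$ and $\langle\vec v,\vec r+\vec r'\rangle=0$ force $\vec v=0$, you stop and say ``I expect this last step to be the main obstacle.'' It is. That lattice statement is exactly where the content of the proposition lives, and an outline that ends with ``to be handled by an induction along the chain'' is a plan, not a proof. One small logical point: you list among your inputs that ``distinct admissible $\vec r$ give non-isotopic tight contact structures,'' but in Honda's paper that distinctness is precisely what Proposition~4.24 establishes; fortunately your argument never actually uses it (you only need that every tight structure is some $\xi_{\vec r}$), so this is a phrasing issue rather than a circularity. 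For comparison, Honda's own argument is organized through convex surface theory: he decomposes $L(p,q)$ into a chain of basic slices and distinguishes the tight structures by the signs of those slices, which are read off from relative Euler classes on the solid tori---morally the same invariants you are computing, but packaged so that the ``lattice'' step never appears as a separate obstacle.
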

	Further, it is known for contact structures with $c_1$ torsion 
	(which is always satisfied for 3-manifolds with $b_1=0$; 
	e.g. lens spaces) that particular homotopy
	invariants completely determine their homotopy classes.
	In \cite{Gompf-SteinHandles}, Gompf defines two invariants, $d_3$
	and $\Gamma$, and proves:
	\begin{thm}[\cite{Gompf-SteinHandles}, Theorem 4.16]
	\label{Theorem: Gompf's Homotopy Criteria}
		If $(M^3,\xi_i)$ for $i=1,2$, satisfies that $c_1(\xi_1)$
		is torsion and $\Gamma(\xi_1,\mathfrak{s})=\Gamma(\xi_2,\mathfrak{s})$
		for some spin structure $\mathfrak{s}$,
		then $\xi_1$ is homotopic to $\xi_2$ if and only if their 
		$d_3$ invariants coincide.		
	\end{thm}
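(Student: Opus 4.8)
The plan is to treat this as a purely homotopy-theoretic classification of cooriented $2$-plane fields via obstruction theory, following Pontryagin's analysis of maps into $S^2$. First I would fix the spin structure $\mathfrak{s}$ and use it to trivialize $TM$ (a closed oriented $3$-manifold is parallelizable, and $\mathfrak{s}$ pins down a homotopy class of trivialization over the $2$-skeleton). Under such a trivialization each cooriented plane field $\xi_i$ is recorded by its positively-directed unit normal, i.e.\ by a map $f_i\colon M\to S^2$, and two plane fields are homotopic exactly when the corresponding maps are. Since $\pi_1(S^2)=0$, $\pi_2(S^2)\cong\Z$, and $\pi_3(S^2)\cong\Z$, the obstructions to homotoping $f_1$ to $f_2$ live first in $H^2(M;\pi_2(S^2))\cong H^2(M;\Z)$ and then in $H^3(M;\pi_3(S^2))\cong H^3(M;\Z)\cong\Z$.

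For the primary obstruction, I would identify the difference class of $f_1$ and $f_2$ in $H^2(M;\Z)$ with $\Gamma(\xi_1,\mathfrak{s})-\Gamma(\xi_2,\mathfrak{s})$; this is essentially the definition of $\Gamma$ as a relative Euler class measured against the reference section supplied by $\mathfrak{s}$. Consequently the hypothesis $\Gamma(\xi_1,\mathfrak{s})=\Gamma(\xi_2,\mathfrak{s})$ says precisely that $f_1$ and $f_2$ agree over the $2$-skeleton $M^{(2)}$ up to homotopy. In particular the ordinary Euler classes agree, $e(\xi_1)=e(\xi_2)=c_1(\xi_1)$, so the torsion hypothesis passes to $\xi_2$ as well.

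Fixing a homotopy over $M^{(2)}$, the remaining obstruction to a global homotopy is the secondary difference class in $H^3(M;\Z)\cong\Z$, and $\xi_1\simeq\xi_2$ precisely when this vanishes for some choice of $2$-skeleton homotopy. The subtlety — and the reason for the torsion hypothesis — is the indeterminacy: changing the homotopy over $M^{(2)}$ alters the secondary class by the subgroup $2\,e(\xi_1)\cup H^1(M;\Z)\subset H^3(M;\Z)$, the factor $2$ arising from the Whitehead-product relation $[\iota_2,\iota_2]=2\eta$ in $\pi_3(S^2)$. Since $c_1(\xi_1)=e(\xi_1)$ is torsion while $H^3(M;\Z)\cong\Z$ is torsion-free, this indeterminacy subgroup is trivial, so the secondary obstruction is a well-defined integer. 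Finally I would invoke Gompf's definition of the $\Q$-valued $d_3$ to see that $d_3(\xi_1)-d_3(\xi_2)$ equals exactly this integer. Since $d_3$ is by construction a homotopy invariant the forward implication is immediate, while for the converse the vanishing of $d_3(\xi_1)-d_3(\xi_2)$ is precisely the vanishing of the secondary obstruction, yielding the global homotopy; hence, given the primary matching, $\xi_1\simeq\xi_2$ if and only if $d_3(\xi_1)=d_3(\xi_2)$.

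The main obstacle I expect is the careful identification of the secondary obstruction's indeterminacy and the verification that the torsion hypothesis annihilates it — in particular the Whitehead-product computation $[\iota_2,\iota_2]=2\eta$ and the claim that, once this indeterminacy is removed, $d_3$ is a faithful numerical record of the secondary class rather than merely an invariant compatible with it. Unwinding the precise definitions of $\Gamma$ and $d_3$ so that the two difference classes match the obstruction-theoretic ones on the nose is the bookkeeping that must be carried out with care.
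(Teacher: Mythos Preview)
The paper does not prove this statement at all: it is quoted verbatim as Theorem 4.16 of \cite{Gompf-SteinHandles} and used as a black box, so there is no ``paper's own proof'' to compare against. Your outline is a faithful sketch of Gompf's original obstruction-theoretic argument (trivialize $TM$ via $\mathfrak{s}$, encode $\xi_i$ as maps $M\to S^2$, identify the primary difference class with $\Gamma(\xi_1,\mathfrak{s})-\Gamma(\xi_2,\mathfrak{s})$, then show the torsion hypothesis kills the $2e(\xi)\cup H^1(M;\Z)$ indeterminacy so the secondary class is well-defined and recorded by $d_3$), and is correct in broad strokes; the only caveats are the ones you flag yourself, namely pinning down precisely that $\Gamma$ and $d_3$ as Gompf defines them match the obstruction classes on the nose.
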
	
	We recall the definitions of $d_3$ and $\Gamma$.  For the
	three-dimensional invariant, $d_3$, we use the
	normalized definition found in \cite{Ozbagci-Stipsicz} -
	but note that it is equivalent to the definition of $\theta$ in
	\cite{Gompf-SteinHandles} which relies on the fact
	that each contact 3-manifold can be realized 
	as the boundary of an almost complex 4-manifold 
	as well as the fact that for $(X^4,J)$, 
	a closed almost complex 4-manifold, the quantity 
	$c_1^2(X,J)-3\sigma(X)-2\chi(X)=0$
	where $\sigma(X)$ and $\chi(X)$ are the signature 
	and Euler characteristic of
	$X$ respectively.  	
	\begin{definition}[\cite{Gompf-SteinHandles}, Definition 4.2]
		For a contact 3-manifold $(M,\xi)$ with $c_1(\xi)$ torsion,
		the three-dimensional invariant
		\[
			d_3(\xi) 
				= \frac{1}{4}\left(c_1^2(X,J)-3\sigma(X)-2\chi(X)\right)\in \Q
		\]
		for any almost complex 4-manifold $(X,J)$ with $\partial X=M$
		satisfying $J TM\cap TM=\xi$.
	\end{definition}
	$\Gamma$ associates to each spin structure on $(M,\xi)$
	an element of $H_1(M;\Z)$.  This is accomplished by noting
	that each spin structure on $(M^3,\xi)$ provides a trivialization 
	of $TM$, which, in turn, identifies $\mbox{Spin}^c(M)$
	with $H^2(M;\Z)$.  Then, with respect to this identification, 
	$\Gamma(\xi,\mathfrak{s})$ is \Poincare dual to 
	the spin$^c$-structure induced by $\xi$.	
	If $(M,\xi) = \partial (X,J)$, 
	a Stein domain, \cite{Gompf-SteinHandles} provides the following characterization
	of $\Gamma$ that we make extensive use of.
	Suppose that $(X,J)$ is obtained by attaching
	2-handles to a Legendrian link $K_1\cup\ldots \cup K_k$
	in $\partial(S^1\times B^3 \natural \ldots \natural S^1\times B^3)$
	with Seifert framings given by $\tb(K_i)-1$.
	Let $\tilde{X}$ be the result of surgering each one handle
	and let $L_0$ be the collection of 0-framed
	unknots, resulting from those surgeries.
	\begin{proposition}[\cite{Gompf-SteinHandles}, Theorem 4.12]
	\label{Proposition: Gamma Description}
		Let $(X,J)$ and $\tilde{X}$ be defined as above. Orient 
		$K_1\cup \ldots\cup K_k\cup L_0$ to obtain a spanning set
		for $H_2(\tilde{X};\Z)$.  Then 
		$\Gamma(\xi,\mathfrak{s})\in H_1(\partial X;\Z)$ is \Poincare
		dual to the restriction of the class $\rho \in H^2(X;\Z)$ whose
		value on each $[K_i]$ is given by
		\[
			\rho([K_i]) 
			=\frac{1}{2}\left(\mbox{rot}(K_i)+\lk(K_i,L'+L_0)\right)\in \Z
		\]
		where $\mbox{rot}(K)=0$ for each $K\in L_0$ and where $L'$ 
		is the characteristic sublink associated to $\mathfrak{s}$.
	\end{proposition}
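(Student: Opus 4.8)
Since this proposition is Gompf's Theorem 4.12, my plan is to reconstruct the argument from the definition of $\Gamma$ rather than to invoke it as a black box. The key conceptual point is that $\Gamma(\xi,\mathfrak{s})$ is, by construction, a ``half'' of $c_1(\xi)$ made integral by the spin structure. Concretely, the spin structure $\mathfrak{s}$ trivializes $TM$ and hence determines a reference spin$^c$ structure $\mathfrak{s}^c$ on $M$ with $c_1(\mathfrak{s}^c)=0$; under the identification $\mbox{Spin}^c(M)\cong H^2(M;\Z)$ coming from $\mathfrak{s}$, the contact spin$^c$ structure $\mathfrak{t}_\xi$ induced by $\xi$ is sent to $\mathfrak{t}_\xi-\mathfrak{s}^c\in H^2(M;\Z)$, and $\Gamma(\xi,\mathfrak{s})$ is its \Poincare dual. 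Since $2(\mathfrak{t}_\xi-\mathfrak{s}^c)=c_1(\mathfrak{t}_\xi)-c_1(\mathfrak{s}^c)=c_1(\xi)$, the entire content of the formula is to exhibit an explicit integral cohomology class on $X$ restricting to $\mathfrak{t}_\xi-\mathfrak{s}^c$ and to evaluate it on the $[K_i]$.

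First I would pass from $M$ to the Stein domain $(X,J)$. The almost complex structure $J$ determines a canonical spin$^c$ structure $\mathfrak{t}_J$ on $X$ with $c_1(\mathfrak{t}_J)=c_1(X,J)$, and by construction its restriction to $\partial X=M$ is exactly $\mathfrak{t}_\xi$. To evaluate $c_1(X,J)$ on the handle classes I would invoke Gompf's Chern-class computation in \cite{Gompf-SteinHandles}: for a $2$-handle attached along a Legendrian knot $K_i$ with framing $\tb(K_i)-1$, the class obtained by capping a Seifert surface of $K_i$ with the core disk satisfies
\[
	\langle c_1(X,J),[K_i]\rangle=\mbox{rot}(K_i),
\]
which accounts for the rotation-number term. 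The reason for surgering the $1$-handles to form $\tilde X$ and adjoining the $0$-framed unknots $L_0$ is that $H_2(\tilde X;\Z)$ then becomes free with the stated spanning set, its intersection form is the linking matrix of $K_1\cup\cdots\cup K_k\cup L_0$, and cohomology classes are computed simply by linking numbers.

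Next I would produce the integral lift coming from $\mathfrak{s}$. The reference spin$^c$ structure $\mathfrak{s}^c$ extends over $\tilde X$ to a spin$^c$ structure whose first Chern class is a characteristic vector for the intersection form; by the standard dictionary between spin structures on a surgered $3$-manifold and characteristic sublinks (\S5.7 of \cite{Gompf-and-Stipsicz}), this characteristic vector is \Poincare dual to $L'+L_0$, so its pairing with $[K_i]$ is $\lk(K_i,L'+L_0)$ (with $\lk(K_i,K_i)$ read as the framing). Defining $\rho$ by the half-sum
\[
	\langle\rho,[K_i]\rangle=\tfrac12\big(\mbox{rot}(K_i)+\lk(K_i,L'+L_0)\big),
\]
this $\rho$ is integral because $c_1(X,J)$ and the characteristic vector are congruent mod $2$ (both reduce to $w_2$, hence both evaluate to $\langle[K_i],[K_i]\rangle\bmod 2$ on each $[K_i]$, so the two summands share a parity), while $2\rho$ restricts to $c_1(\xi)$ on $M$. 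Thus $\rho|_M=\mathfrak{t}_\xi-\mathfrak{s}^c$ and $\Gamma(\xi,\mathfrak{s})=\mathrm{PD}(\rho|_M)$, as claimed.

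The step I expect to be the main obstacle is the bookkeeping of the spin structure's contribution: verifying that the extension of $\mathfrak{s}^c$ over $\tilde X$ has first Chern class \Poincare dual to precisely $L'+L_0$ (and not, say, $L'$ alone), and fixing all orientation and sign conventions so that the two halves genuinely add rather than subtract. This is exactly where the surgered $1$-handles enter — the $L_0$ summand is forced by the requirement that the chosen class be characteristic on all of $\tilde X$, including the new $2$-spheres — and it is the reason $\mbox{rot}$ is declared to vanish on $L_0$. Once the integrality of $\rho$ and the identification of the characteristic vector with $L'+L_0$ are pinned down, restricting $\rho$ to $M$ and dualizing yields the formula with no further work.
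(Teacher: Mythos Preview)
The paper does not give its own proof of this proposition: it is quoted verbatim as Theorem~4.12 of \cite{Gompf-SteinHandles} and used as a black box in the computations of Propositions~\ref{Proposition: c_1 coming from B_p,q} and~\ref{Proposition: c_1 coming from A_m,n}. There is therefore nothing in the paper to compare your argument against.

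That said, your outline is a faithful reconstruction of Gompf's reasoning. The identification of $\Gamma(\xi,\mathfrak{s})$ as the Poincar\'e dual of the half-difference $\mathfrak{t}_\xi-\mathfrak{s}^c$, the evaluation $\langle c_1(X,J),[K_i]\rangle=\mbox{rot}(K_i)$, and the integrality argument via the common mod~$2$ reduction to $w_2$ are all the right ingredients. The one place where your sketch is slightly imprecise is the justification for the $L_0$ summand: the characteristic sublink for $\mathfrak{s}$ on $\partial\tilde X$ need not literally contain every component of $L_0$, so the appearance of $\lk(K_i,L_0)$ is not purely a ``characteristic for $\tilde X$'' phenomenon. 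Rather, it arises because the almost complex structure $J$ on $X$ does not extend over the surgered $2$-handles replacing the $1$-handles, and one must correct $c_1$ by the dual of $L_0$ when passing from $X$ to $\tilde X$ in order to obtain a class that still restricts to $c_1(\xi)$ on $M$. This is the genuine bookkeeping subtlety you flag at the end, and Gompf handles it carefully in \cite{Gompf-SteinHandles}; once it is sorted out, the rest of your argument goes through as written.
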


	\begin{proposition}\label{Proposition: c_1 coming from B_p,q}
		For $p>q\geq 1$ relatively prime, the contact structure induced by the Stein
		structure, $J_{p,q}$, on $B_{p,q}$ given by Figure \ref{Figure: (B_p,q,J_p,q}
		\begin{figure}[!ht]
			\centering
				\includegraphics[scale=.45]{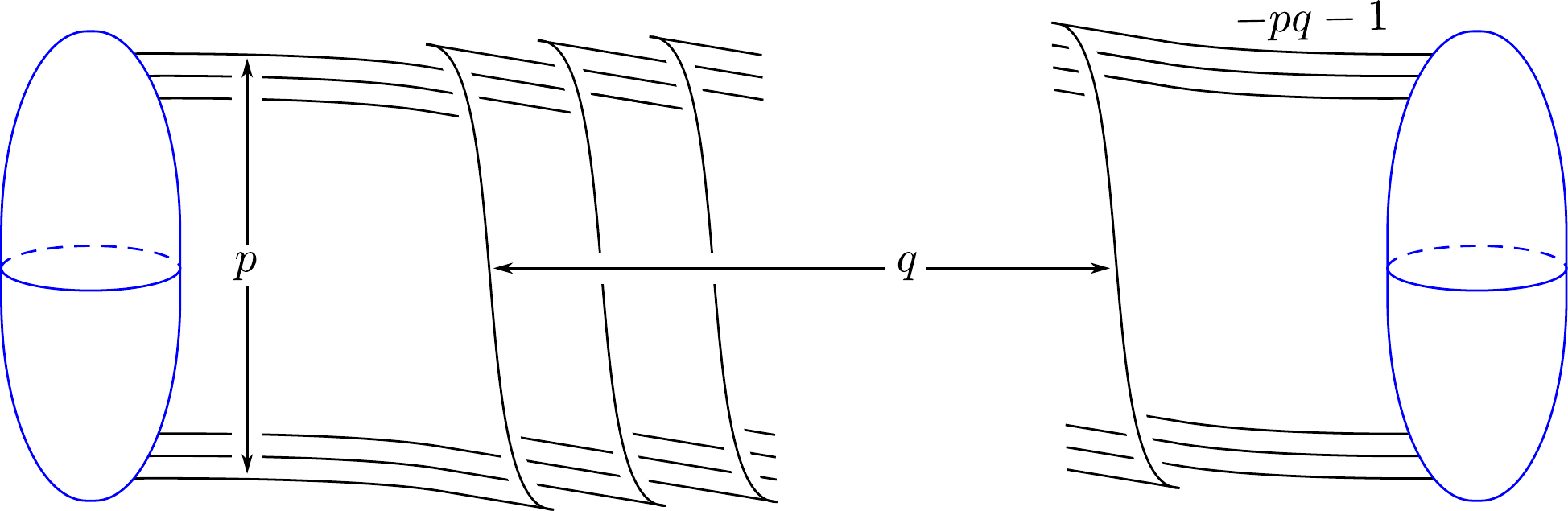}
			\caption{\small $(B_{p,q},J_{p,q})$}\label{Figure: (B_p,q,J_p,q}
		\end{figure}
		has $\Gamma(\xi_{J_{p,q}},\mathfrak{s}) = \frac{pq}{2} \cdot \mu_0$
		in an appropriate basis of $H_1(L(p^2,pq-1);\Z)$ and for a fixed choice of 
		$\mathfrak{s}$ when $p\in 2\Z$.
	\end{proposition}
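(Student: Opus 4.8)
The plan is to apply Proposition \ref{Proposition: Gamma Description} directly to the Stein domain $(B_{p,q},J_{p,q})$ depicted in Figure \ref{Figure: (B_p,q,J_p,q}. First I would write down the surgered manifold $\tilde{X}$: surger the single $1$-handle to introduce a $0$-framed unknot $L_0$ linking the Legendrian attaching circle $K_1$ of the $2$-handle, and choose orientations on $K_1\cup L_0$ as in the proposition. Since the relevant homology is that of the lens space $\partial B_{p,q}\approx L(p^2,pq-1)$, I will use Lemma \ref{Lemma: Moving Generators} together with the boundary diffeomorphism of Proposition \ref{Proposition: B_p,q Boundary Induction} (and Corollary \ref{Corollary: boundary of B_p,q is a lens space}) to set up an explicit basis element $\mu_0$ for $H_1(L(p^2,pq-1);\Z)$ — concretely $\mu_0$ is carried to a meridian of the $-s_0$-framed unknot in the linear plumbing of Figure \ref{Figure: B_p,q Boundary Induction - Final Case2)}, matching the conventions already fixed in Remark \ref{Remark: Homology Pullback}.

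Next I would compute the two numerical inputs of Proposition \ref{Proposition: Gamma Description} applied to $[K_1]$: the rotation number $\mathrm{rot}(K_1)$ and the linking number $\lk(K_1,L'+L_0)$, where $L'$ is the characteristic sublink of the chosen spin structure $\mathfrak{s}$. The rotation number is read off from the Legendrian front in Figure \ref{Figure: (B_p,q,J_p,q} by counting up-cusps minus down-cusps (oriented appropriately); because the attaching circle wraps $p$ times around the $1$-handle with the prescribed twisting, this count is controlled by $p$ and $q$. The linking number $\lk(K_1,L_0)$ equals the winding number of $K_1$ around the $1$-handle, which is $p$. When $p$ is even, one must also add the contribution $\lk(K_1,L')$ coming from the fixed characteristic sublink; this is where the hypothesis ``for a fixed choice of $\mathfrak{s}$'' enters, and I would pin down $L'$ using the spin-structure bookkeeping established in Lemma \ref{Lemma: Spinning} and the surrounding lemmas. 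Combining these, $\rho([K_1]) = \tfrac12(\mathrm{rot}(K_1)+\lk(K_1,L'+L_0))$ should simplify to $\tfrac{pq}{2}$ after the algebra, yielding $\Gamma(\xi_{J_{p,q}},\mathfrak{s}) = \tfrac{pq}{2}\cdot\mu_0$ once we translate the Poincar\'e dual back into the $\mu_0$-basis via Lemma \ref{Lemma: Moving Generators}.

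The main obstacle I anticipate is the final translation step: Proposition \ref{Proposition: Gamma Description} gives $\Gamma$ as the Poincar\'e dual of a restricted cohomology class on $X$, and expressing this concretely as a multiple of $\mu_0$ in $H_1(L(p^2,pq-1);\Z)$ requires carefully composing the identification of $H_1(\partial B_{p,q})$ coming from the handle picture of $B_{p,q}$ with the linear-plumbing description of $L(p^2,pq-1)$ obtained in Corollary \ref{Corollary: boundary of B_p,q is a lens space}. In particular one must check that the generator dual to $[K_1]$ corresponds, under the chain of handle slides and blow-downs in Proposition \ref{Proposition: B_p,q Boundary Induction}, to the asserted $\mu_0$, and that no spurious sign or factor of $p$ is introduced; the relevant determinant computations are exactly those packaged into Lemma \ref{Lemma: intersection form algebra} and Lemma \ref{Lemma: Moving Generators}, so I would invoke those rather than recompute. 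A secondary subtlety, again only when $p$ is even, is verifying that the two choices of $\mathfrak{s}$ genuinely give the same value $\tfrac{pq}{2}\cdot\mu_0$ modulo $p^2$ up to the stated ambiguity, which follows because the difference of the two characteristic sublinks changes $\rho([K_1])$ by a multiple of $p$, hence $\Gamma$ by a multiple of $p\cdot\mu_0$ that is absorbed by the appropriate choice referenced in the statement.
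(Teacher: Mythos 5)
Your overall strategy (apply Proposition \ref{Proposition: Gamma Description} to the surgered picture, then rewrite the answer in terms of $\mu_0$) is the same as the paper's, but two steps of your plan would not go through as written.

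First, $\Gamma$ is the Poincar\'e dual of $\rho$ restricted to the boundary, and in the surgered manifold $X_{p,q}$ the spanning set consists of \emph{both} classes: $[K_1]$ and $[K_0]=[L_0]$, the $0$-framed unknot from the surgered $1$-handle. You only ever evaluate $\rho$ on $[K_1]$. The term $\rho([K_0])\,\tilde\mu_0=\tfrac12\lk(K_0,L'+L_0)\,\tilde\mu_0$ vanishes when $p$ is odd (then $t_1=1$ is forced), but for $p$ even with the fixed spin structure it equals $\tfrac{p}{2}\,\tilde\mu_0$ and must be carried along (and $\tilde\mu_0$ must be re-expressed as $\mu_0+q\mu_1$); dropping it loses exactly the contribution the paper needs to cancel against the $\tfrac{p^2}{2}$ terms. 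Second, your anticipated arithmetic is off: with the orientation chosen so $\mathrm{rot}(K_1)=q$ and $\lk(K_1,L_0)=p$, one gets $\rho([K_1])=\tfrac12\bigl(q+\tfrac{3-t_0t_1^q}{2}p-\tfrac{1-t_1}{2}(pq+1)\bigr)$, which is on the order of $\tfrac{p+q}{2}$, \emph{not} $\tfrac{pq}{2}$. The product $pq$ does not come from the front diagram at all; it appears only after using the presentation $H_1(\partial B_{p,q};\Z)=\langle\mu_0,\mu_1: p\mu_1=0,\ p\mu_0=(1-pq)\mu_1\rangle$, i.e.\ the relation $\mu_1=p\mu_0$ in $\Z/p^2\Z$, so that $\rho([K_1])\mu_1=\bigl(\tfrac{pq}{2}+\ast\tfrac{p^2}{2}\bigr)\mu_0$. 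This also shows that your proposed detour through the linear plumbing (Proposition \ref{Proposition: B_p,q Boundary Induction}, Lemma \ref{Lemma: Moving Generators}, Lemma \ref{Lemma: intersection form algebra}) is unnecessary here: the paper does the whole computation in the surgery diagram of $B_{p,q}$ itself; the plumbing identification is only needed later, in Corollary \ref{xi_J is contactomorphic to xi_J_1}, to compare with $\partial A_{m,n}$.

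Finally, your remark about the even case is backwards: changing the characteristic sublink changes $\rho([K_1])$ by $\tfrac{p}{2}$ (and $\rho([K_0])$ as well), hence changes $\Gamma$ by $\tfrac{p^2}{2}\cdot\mu_0$, which is a nonzero element of $\Z/p^2\Z$ when $p$ is even; it is not ``absorbed.'' That is precisely why the statement fixes a particular $\mathfrak{s}$ (the paper takes $(t_0,t_1)=(1,-1)$, for which $t_0t_1^q=-1$ since $q$ is odd, making the coefficient of $\tfrac{p^2}{2}$ even and hence zero mod $p^2$). You should verify that cancellation explicitly rather than appeal to an absorption argument.
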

	
	\begin{proof}
		Let $K_0$ be the boundary of the carving disk of
		the 1-handle in Figure \ref{Figure: (B_p,q,J_p,q}
		let $K_1$ be the attaching circle of the single 2-handle,
		and let $X_{p,q}$ be the 4-manifold obtained from Figure \ref{Figure: (B_p,q,J_p,q}
		by surgering the 1-handle (exchanging the ``dot'' on $K_0$ for a 0-framed 2-handle).  
		Then, let $\mathfrak{s}\in \cal{S}(\partial B_{p,q})$ be the spin structure on
		$\partial B_{p,q}$ specified by $(t_0,t_1)$ in Figure 
		\ref{Figure: B_p,q spin structures}.  As we have to slide the 2-handle under
		1-handle $q$-times to arrive at Figure \ref{Figure: (B_p,q,J_p,q}, 
		we see that $\mathfrak{s}$ corresponds 
		to the characteristic sublink 
		$$L' = \frac{1-t_0t_1^q}{2}K_0+\frac{1-t_1}{2}K_1$$
		in $X_{p,q}$.
		Orient the 2-handles so that $\mbox{rot}(K_1)=q$ and so that $\lk(K_0,K_1)=p$.
		In this orientation, let $\tilde\mu_i$ be a right handed meridian for 
		$K_i$ in $X_{p,q}$ and let $\mu_i$ be a right handed meridian for
		the corresponding (oriented) knots in $\partial B_{p,q}$ of 
		Figure \ref{Figure: B_p,q spin structures} so that
		\begin{align*}
			H_1(\partial X_{p,q};\Z)&=
				\left\langle\tilde{\mu}_0, \tilde{\mu}_1 : 
						p\tilde{\mu}_1=0, p\tilde{\mu_0}=(pq+1)\tilde{\mu}_1\right\rangle,\\
			H_1(\partial B_{p,q};\Z)&=
				\left\langle\mu_0, \mu_1 : p\mu_1=0, p\mu_0=(1-pq)\mu_1\right\rangle,
		\end{align*}
		where $\tilde{\mu}_0=\mu_0+q\mu_1$ and $\tilde{\mu}_1=\mu_1$.
		Then, for $j=0,1$, by Proposition \ref{Proposition: Gamma Description},\
		we have
		\[
			\rho([K_j]) = \frac{1}{2}\left(\frac{1-t_1}{2}p\right)(1-j)
			+\frac{1}{2}\left(q+\frac{3-t_0t_1^q}{2}p-\frac{1-t_1}{2}(pq+1)\right)j.
		\]
		Noting that $\mu_1=p\mu_0$, we find that
		\begin{align*}
			\Gamma(\xi_{J_{p,q}},\mathfrak{s}) 
				&= \frac{1}{2}\left(\frac{1-t_1}{2}p\right)\tilde\mu_0
			+\frac{1}{2}\left(q+\frac{3-t_0t_1^q}{2}p-\frac{1-t_1}{2}(pq+1)\right)\tilde\mu_1\\
			 &= \left(\frac{pq}{2}+\left(\frac{3-t_0t_1^q}{2}\right)\frac{p^2}{2}\right) \cdot \mu_0.
		\end{align*}
		Since there is no 2-torsion in $\Z/p^2\Z$ if $p\in 2\Z+1$, $p^2/2=0$ 
		in that case.
		If $p\in 2\Z$, then we can take $\mathfrak{s}$ corresponding to 
		$(t_0,t_1)=(1,-1)$.  In either case, (fixing the spin structure)
		we have $\Gamma(\xi_{J_{p,q}},\mathfrak{s})= \frac{pq}{2}\cdot\mu_0$.		
	\end{proof}
		
	\begin{proposition}\label{Proposition: c_1 coming from A_m,n}
		For $n>m\geq 1$ relatively prime, the contact structure induced by the Stein
		structure $(A_{m,n},\tilde{J}_{m,n})$ given by Figure \ref{Figure: (A_m,n,J_1)} has 
		\[
			\Gamma(\xi_{\tilde{J}_{m,n}},f_*(\mathfrak{s})) = 
			\frac{m+n}{2}\left((d-c)^2+\frac{1-t_1}{2}
						\left(1+(d-c)^2\left(mn
						+\frac{1+(-1)^{c+\ell} t_0}{2}(m+n)\right)\right)\right)\gamma_0
		\]
		in an appropriate basis of $H_1(\partial A_{m,n};\Z)$ where $cm+dn=1$.
	\end{proposition}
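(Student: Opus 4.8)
The plan is to follow the template of Proposition \ref{Proposition: c_1 coming from B_p,q} almost verbatim: apply Gompf's description of $\Gamma$ (Proposition \ref{Proposition: Gamma Description}) to the explicit Stein handle picture of Figure \ref{Figure: (A_m,n,J_1)}, then transport the answer into the basis adapted to $\gamma_0$ via the boundary diffeomorphisms of Section \ref{Section: Boundary Diffeomorphisms}. First I would set $K_1$ to be the Legendrian attaching circle of the $2$-handle in Figure \ref{Figure: (A_m,n,J_1)}, let $K_0$ be the boundary of the carving disk of the single $1$-handle, and let $X_{m,n}$ be the $4$-manifold obtained by surgering that $1$-handle; in the notation of Proposition \ref{Proposition: Gamma Description} this makes $\tilde X = X_{m,n}$ with $L_0 = K_0$, and orienting $K_0\cup K_1$ gives a spanning set for $H_2(X_{m,n};\Z)\cong\Z^2$. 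From the front of Figure \ref{Figure: (A_m,n,J_1)} I would read off $\mathrm{rot}(K_1)$ (counting cusps against the chosen orientation of the two parallel bands of $m$ and $n$ strands) and $\lk(K_0,K_1)$; together with $\lk(K_1,K_1)=\tb(K_1)-1=mn-2(m+n)$ from Proposition \ref{Proposition: A Stein structure on A_m,n} this determines the linking matrix of $X_{m,n}$, hence a presentation $H_1(\partial X_{m,n};\Z)=\langle\tilde\mu_0,\tilde\mu_1\rangle$ and its identification with the presentation of $H_1(\partial A_{m,n};\Z)$ coming from Corollary \ref{Corollary: boundary of A_m,n is a lens space}.

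Next I would locate the spin structure. Since Figure \ref{Figure: (A_m,n,J_1)} is obtained from the defining picture of $A_{m,n}$ by the handle slides in the proof of Proposition \ref{Proposition: A Stein structure on A_m,n}, Lemma \ref{Lemma: Spinning} lets me convert the pair $(v_0,v_1)$ of Proposition \ref{Proposition: Induced Spin Structures}, which records $f_*(\mathfrak{s})$ in the standard $\partial A_{m,n}$ picture, into the characteristic sublink $L'\subset K_0\cup K_1$ appropriate to $X_{m,n}$; it will have the form $L'=\tfrac{1-\varepsilon_0}{2}K_0+\tfrac{1-\varepsilon_1}{2}K_1$ for explicit $\varepsilon_0,\varepsilon_1\in\{\pm1\}$ built from $t_0,t_1$ and the parity of $\ell$. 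Then Proposition \ref{Proposition: Gamma Description} gives, for $j=0,1$, $\rho([K_j])=\tfrac12\bigl(\mathrm{rot}(K_j)+\lk(K_j,L'+L_0)\bigr)$ with $\mathrm{rot}(K_0)=0$, and substituting the values from the previous two steps yields $\Gamma(\xi_{\tilde J_{m,n}},f_*(\mathfrak{s}))=\rho([K_0])\tilde\mu_0+\rho([K_1])\tilde\mu_1$ as an explicit element of $H_1(\partial X_{m,n};\Z)$.

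Finally I would push this class through the diffeomorphism of Corollary \ref{Corollary: boundary of A_m,n is a lens space}. By Remark \ref{Remark: Homology Pullback} and Lemma \ref{Lemma: Moving Generators}, $\gamma_0$ equals $\pm n\mu_0$ when $\ell$ is even and $\pm m\mu_0$ when $\ell$ is odd, while $\tilde\mu_0,\tilde\mu_1$ are expressed in terms of $\mu_0$; rewriting $\rho([K_0])\tilde\mu_0+\rho([K_1])\tilde\mu_1$ as a multiple of $\gamma_0$ therefore requires inverting $m$ (resp.\ $n$) modulo $p^2$, and this is exactly where the Bézout relation $cm+dn=1$ forces the coefficient $d-c$ (and hence $(d-c)^2$) to appear. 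Using the continued-fraction identity $[-\sigma_0,\ldots]=-p^2/(pq-1)$ of Section \ref{Section: Algebraic Details} (Lemma \ref{Lemma: intersection form algebra}) to collect terms then produces the stated formula.

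I expect the main obstacle to be precisely this last round of bookkeeping: simultaneously fixing the orientation conventions that set the signs of $\mathrm{rot}(K_1)$ and $\lk(K_0,K_1)$, carrying the change of basis from $\{\tilde\mu_0,\tilde\mu_1\}$ to $\gamma_0$ through the several handle slides and blow-downs of Corollary \ref{Corollary: boundary of A_m,n is a lens space}, propagating the characteristic sublink via Proposition \ref{Proposition: Induced Spin Structures}, and reconciling the two parities of $\ell$ — all while carrying out the modular arithmetic coming from $cm+dn=1$. Much of this is the content of the algebraic lemmas of Section \ref{Section: Algebraic Details}, which I would invoke rather than redo; the conceptual skeleton is identical to that of Proposition \ref{Proposition: c_1 coming from B_p,q}.
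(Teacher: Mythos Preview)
Your overall template is right and matches the paper: surger the $1$-handle, apply Gompf's description of $\Gamma$ to the Legendrian picture, and track the spin structure via Proposition \ref{Proposition: Induced Spin Structures}. The paper orients so that $\mathrm{rot}(K_1)=1$ and $\lk(K_0,K_1)=m+n$, and only a \emph{single} handle slide separates Figure \ref{Figure: (A_m,n,J_1)} from the defining picture of $A_{m,n}$, so the characteristic-sublink bookkeeping is lighter than you suggest (no real appeal to Lemma \ref{Lemma: Spinning} is needed beyond one slide).

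Where you diverge is in the last step. You propose to push everything through the boundary diffeomorphism of Corollary \ref{Corollary: boundary of A_m,n is a lens space} into the linear plumbing, express both $\Gamma$ and $\gamma_0$ in that basis via Lemma \ref{Lemma: Moving Generators} and Remark \ref{Remark: Homology Pullback}, and then divide. The paper avoids this detour entirely: it stays in
\[
H_1(\partial A_{m,n};\Z)=\langle\gamma_0,\gamma_1:(m+n)\gamma_1=0,\ mn\gamma_1=-(m+n)\gamma_0\rangle
\]
and proves directly that $\gamma_1=(d-c)^2(m+n)\gamma_0$. The trick is to rewrite $cm+dn=1$ as the pair $c(m+n)+(d-c)n=1$ and $d(m+n)-(d-c)m=1$, multiply these two equations, and read off $-(d-c)^2mn+(m+n)(\cdots)=1$; applying the two relations in $H_1$ then gives the identity. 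Substituting into the raw expression $\Gamma=\alpha\gamma_0+\beta\gamma_1$ immediately produces the stated formula, with no continued fractions, no Lemma \ref{Lemma: intersection form algebra}, and no parity split on $\ell$ at this stage (that split enters only later, in Corollary \ref{xi_J is contactomorphic to xi_J_1}, when one actually compares with $B_{p,q}$ via $f$). Your route would work, but it front-loads exactly the bookkeeping you flag as the main obstacle; the paper's algebraic identity sidesteps it.
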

	
	\begin{proof}
		Let $\tilde{X}_{m,n}$ be the 
		4-manifold obtained from $A_{m,n}$ by surgering the 1-handle.  
		Let $f_*(\mathfrak{s})\in \cal{S}(\partial A_{m,n})$ be the spin 
		structure corresponding to the characteristic sublink
		$(t_0,t_1)$ in $\partial B_{p,q}$.
		From Proposition \ref{Proposition: Induced Spin Structures},
		we have that 
		$f_*(\mathfrak{s})=
		\left(\frac{(-1)^{c+\ell} t_0+t_1+(-1)^{c+\ell+1} t_0t_1+1}{2},t_1\right)$.
		Then, since we slide the 2-handle once under the 1-handle
		to get to Figure \ref{Figure: (A_m,n,J_1)}, we consider
		the characteristic sublink
		\[
			L' = \frac{1-t_1}{2}\left(\left(
				\frac{1+(-1)^{c+\ell}t_0}{2}\right)K_0+K_1\right)
		\]
		where $K_0$ is the 0-framed unkot arising from the surgery and 
		$K_1$ is the Legendrian attaching circle of the single 2-handle.
		Orient $K_0$ and $K_1$ so that $\mbox{rot}(K_1)=1$ and so 
		that $\lk(K_0,K_1)=m+n$.  With respect to this orientation,
		let $\gamma_i$ be a right-handed meridian for
		$K_i$ (viewed in $\partial A_{m,n}$ prior to the single handle slide).  
		Then,
		by Proposition \ref{Proposition: Gamma Description},
		\begin{align*}
			\Gamma(\xi_{\widetilde{J}_{m,n}},f_*(&\mathfrak{s})) 
				=  \frac{1-t_1}{2} \frac{m+n}{2}\gamma_0
						+\frac{1}{2}\left(1+\frac{1-t_1}{2}\left(mn
							+\frac{1+(-1)^{c+\ell} t_0}{2}(m+n)\right)\right)\gamma_1\\
		\end{align*}
		To see that $\Gamma(\partial A_{m,n},\mathfrak{s})$ is
		as claimed, note that 
		\[
			H_1(\partial A_{m,n};\Z)=\<\gamma_0,\gamma_1 : 
			(m+n)\gamma_1=0, mn\gamma_1 = -(m+n)\gamma_0\>.
		\]
		Combining this with the following observation;
		for $c$ and $d$ with $cm+dn=1$, 
		we necessarily have $c(m+n)+(d-c)n=1$ and $d(m+n)-(d-c)m=1$.
		Multiplying these two equations gives that
		\[
			-(d-c)^2\cdot mn + \left(cd(m+n)-c(d-c)m+d(d-c)n\right)\cdot (n+m) = 1.
		\]
		Thus,
		\begin{align*}
			\gamma_1 
				&= \gamma_1 - \left(cd(m+n)-c(d-c)m+d(d-c)n\right)\cdot (n+m)\gamma_1\\
				&= \left(1-\left(cd(m+n)-c(d-c)m+d(d-c)n\right)\cdot (n+m)\right)\gamma_1\\
				&= -(d-c)^2\cdot mn \cdot \gamma_1 \\
				&= (d-c)^2\cdot(m+n)\cdot \gamma_0.
		\end{align*}
		Exchanging $\gamma_1$ for $(d-c)^2 (m+n)\gamma_0$ in
		$\Gamma(\xi_{\widetilde{J}_{m,n}},f_*(\mathfrak{s}))$
		gives the result.
	\end{proof}
	
	\begin{cor}\label{xi_J is contactomorphic to xi_J_1}
		Suppose that $n>m\geq 1$ and $p-q>q\geq 1$ are each
		relatively prime such that $A(p-q,q)=(m,n)$ or $A(p-q,q)=(n,m)$, 
		then $\xi_{\tilde{J}_{m,n}}$
		is contactomorphic to  $\xi_{J_{p,q}}$.
	\end{cor}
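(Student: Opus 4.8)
The plan is to prove that the two plane fields become homotopic once their underlying lens spaces are identified via the diffeomorphism $f$ of Theorem \ref{Theorem: Boundary Diffeomorphism}, and then to promote this to a contactomorphism by tightness. Both $\xi_{J_{p,q}}$ and $\xi_{\widetilde{J}_{m,n}}$ are Stein fillable — the former by \cite{Lekili-Bpq}, the latter by Proposition \ref{Proposition: A Stein structure on A_m,n} — hence tight, and their first Chern classes are torsion since $b_1$ of a lens space vanishes. So Theorem \ref{Theorem: Gompf's Homotopy Criteria} reduces the homotopy statement to checking that $d_3$ and $\Gamma(\cdot,\mathfrak{s})$, for one spin structure $\mathfrak{s}$, agree for $f_*\xi_{J_{p,q}}$ and $\xi_{\widetilde{J}_{m,n}}$ on $\partial A_{m,n}$; Theorem \ref{Tight + Homotopic = Isotopic} then upgrades ``homotopic'' to ``contactomorphic.'' The case $A(p-q,q)=(n,m)$ reduces to $A(p-q,q)=(m,n)$ via $A_{m,n}\approx A_{n,m}$, so I assume $n>m$.

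The $d_3$ comparison is immediate: each of $B_{p,q}$ and $A_{m,n}$ is a rational homology ball built from a single $0$-, $1$-, and $2$-handle, so $\chi=1$ and $\sigma=0$, and $c_1^2=0$ for either Stein structure because $H^2(\cdot;\Q)=0$ for a rational homology ball (so the self-pairing of $c_1$ entering the definition of $d_3$ vanishes). Therefore $d_3(\xi_{J_{p,q}})=d_3(\xi_{\widetilde{J}_{m,n}})=\tfrac14(0-0-2)=-\tfrac12$, and the $d_3$ hypothesis of Theorem \ref{Theorem: Gompf's Homotopy Criteria} is automatic.

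The real content is matching $\Gamma$, and here I would lean on the computations already in hand. Fix the spin structure $\mathfrak{s}$ on $\partial B_{p,q}$ given by $(t_0,t_1)$ — the unique one when $p$ is odd, the one with $(t_0,t_1)=(1,-1)$ when $p$ is even — and let $f_*(\mathfrak{s})$ be its image under $f$, determined by Proposition \ref{Proposition: Induced Spin Structures}. Proposition \ref{Proposition: c_1 coming from B_p,q} gives $\Gamma(\xi_{J_{p,q}},\mathfrak{s})=\tfrac{pq}{2}\mu_0$, and Proposition \ref{Proposition: c_1 coming from A_m,n} gives the corresponding (longer) formula for $\Gamma(\xi_{\widetilde{J}_{m,n}},f_*(\mathfrak{s}))$ as a multiple of $\gamma_0$. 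Pulling this back via $f^{-1}_*$, using Remark \ref{Remark: Homology Pullback} and Corollary \ref{Corollary: pulling back gamma0} to identify $f^{-1}_*\gamma_0$ with $n\mu_0$ when $\ell$ is even and with $m\mu_0$ when $\ell$ is odd, turns the required equality $f_*\Gamma(\xi_{J_{p,q}},\mathfrak{s})=\Gamma(\xi_{\widetilde{J}_{m,n}},f_*(\mathfrak{s}))$ into a single congruence in $H_1(\partial B_{p,q};\Z)\cong\Z/p^2\Z$ involving $pq$, $m+n$, $mn$, the parity of $\ell$, and the integers $c,d$ with $cm+dn=1$ (the combination $d-c$ entering through $c(m+n)+(d-c)n=1=d(m+n)-(d-c)m$, as in the proof of Proposition \ref{Proposition: c_1 coming from A_m,n}). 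Verifying this congruence — in each of the cases $\ell$ even/odd and $p$ even/odd — is the crux and the main obstacle: it requires translating between $(p,q)$ and $(m,n)$ through Yamada's function $A$ and the Euclidean sequences of Definition \ref{Definition: Euclidean Sequences}, which is exactly what Lemma \ref{Lemma: Defining A(p-q,q)} and Lemma \ref{Lemma: intersection form algebra}, together with the identification of the linear plumbings in Corollaries \ref{Corollary: boundary of B_p,q is a lens space} and \ref{Corollary: boundary of A_m,n is a lens space}, are designed to supply. Once the congruence is in place, Theorems \ref{Theorem: Gompf's Homotopy Criteria} and \ref{Tight + Homotopic = Isotopic} conclude that $\xi_{\widetilde{J}_{m,n}}$ is contactomorphic to $\xi_{J_{p,q}}$.
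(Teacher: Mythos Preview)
Your approach is correct and essentially identical to the paper's: match $d_3$ (both equal $-\tfrac12$ since the fillings are rational balls), pull back $\Gamma(\xi_{\widetilde{J}_{m,n}},f_*(\mathfrak{s}))$ via $f^{-1}$ using Remark \ref{Remark: Homology Pullback}, compare with $\Gamma(\xi_{J_{p,q}},\mathfrak{s})=\tfrac{pq}{2}\mu_0$, and invoke Theorems \ref{Theorem: Gompf's Homotopy Criteria} and \ref{Tight + Homotopic = Isotopic}. The one place you are slightly off target is the final congruence: the paper packages that verification as Lemma \ref{Lemma: Reducing Gamma mod p^2}, whose key input is Yamada's identity $d-c=q$ (Lemma \ref{Lemma: Yamada's Lemma 2.5}) together with $m+n=p$ and $qn=1-cp$, rather than Lemma \ref{Lemma: intersection form algebra}, which is only used to confirm the plumbing bounds $L(p^2,pq-1)$ and plays no role in the $\Gamma$-computation.
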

	\begin{proof}
		We show that, after a suitable identification of $\partial A_{m,n}$
		and $\partial B_{p,q}$, the homotopy class of 
		$\xi_{\tilde{J}_{m,n}}$ corresponds
		with that of $\xi_{J_{p,q}}$.
		Both contact structures arise as complex tangencies 
		of the boundaries of Stein structures on rational 4-balls.  
		As such, 
		\[
			d_3(\xi_{\tilde{J}_{m,n}}) 
				= d_3(\xi_{J_{p,q}}) 
				=-\frac{1}{2}.
		\]
		Therefore, we only need to show that by applying
		$f^{-1}:\partial A_{m,n} \to \partial B_{p,q}$
		of Theorem \ref{Theorem: Boundary Diffeomorphism},
		$$\Gamma(f^{-1}_*\xi_{\tilde{J}_{m,n}},\mathfrak{s})
		=\Gamma(\xi_{J_{p,q}},\mathfrak{s})$$ for some spin structure
		$\mathfrak{s}\in \cal{S}(\partial B_{p,q})$.
		Now, by Proposition \ref{Proposition: c_1 coming from A_m,n}
		along with Remark \ref{Remark: Homology Pullback}
		and Lemma \ref{Lemma: Reducing Gamma mod p^2}
		we have
		\begin{align*}
			\Gamma(f^{-1}_*&\xi_{\tilde{J}_{m,n}},\mathfrak{s})
			 = f^{-1}_*\Gamma(\xi_{\tilde{J}_{m,n}},f_*(\mathfrak{s}))\\
			&= \frac{p}{2}\left((d-c)^2+\frac{1-t_1}{2}
						\left(1+(d-c)^2\left(mn
						+\frac{1+(-1)^{c+\ell} t_0}{2}p\right)\right)\right)f^{-1}_*(\gamma_0)\\
			&= \left\{ \begin{array}{ll}
			\frac{p}{2}\left((d-c)^2+\frac{1-t_1}{2}
						\left(1+(d-c)^2\left(mn
						+\frac{1+(-1)^c t_0}{2}p\right)\right)\right)
						n\mu_0	& \mbox{if $\ell \in 2\Z  $},\\
			\frac{p}{2}\left((d-c)^2+\frac{1-t_1}{2}
						\left(1+(d-c)^2\left(mn
						+\frac{1+(-1)^d t_0}{2}p\right)\right)\right)
						m\mu_0	& \mbox{if $\ell \in 2\Z+1$} \end{array}\right.\\
			&= \frac{pq}{2}\mu_0 = \Gamma(\xi_{J_{p,q}},\mathfrak{s})
		\end{align*}
		where the case when $\ell\in 2\Z+1$ follows from Lemma
		\ref{Lemma: Reducing Gamma mod p^2}	by symmetry.
		It follows from Theorem \ref{Theorem: Gompf's Homotopy Criteria}
		that $\xi_{J_{p,q}}$ and $f^{-1}_* \xi_{\tilde{J}_{m,n}}$ are in the same homotopy 
		class and thus, by Theorem \ref{Tight + Homotopic = Isotopic},
		isotopic.  Therefore $f^{-1}$ gives a contactomorphism from
		$(\partial A_{m,n}, \xi_{\tilde{J}_{m,n}})$ to 
		$(\partial B_{p,q},\xi_{J_{p,q}})$.
	\end{proof}
	This completes the proof of Theorem \ref{Each Amn is Stein}.  Although
	Lisca's result allows us to conclude that $A_{m,n}\approx B_{p,q}$
	whenever their boundaries coincide, it does not tell us anything about the 
	Stein structures $\tilde{J}_{m,n}$ versus $J_{p,q}$.  
	In \cite{Lekili-Bpq}, the authors note that it is unknown
	whether or not $B_{p,q}$ admits more than one Stein structure.
	Clearly, Theorem \ref{Each Amn is Stein} fails to answer that
	question; although, it does provide another candidate for study.
	}

	\section{The Algebraic Details}\label{Section: Algebraic Details}
	{			
	In this section we state the necessary algebra
	used in the proofs of Sections \ref{Section: Boundary Diffeomorphisms}
	and \ref{Section: Homotopy Invariants}. We start by 
	giving a definition of the function $A$ of \cite{Yamada-Amn} which
	associates the relatively prime 
	pair $(m,n)$ to a given relatively prime pair $(p-q,q)$. 
	Rather than relying on Yamada's original definition, we provide a 
	description of $A$ which dovetails with the boundary diffeomorphisms
	of Section \ref{Section: Boundary Diffeomorphisms}.  The 
	following lemma gives that definition and proves that
	it is equivalent to Yamada's original definition.	
\begin{lem}\label{Lemma: Defining A(p-q,q)}
	Let $p - q > q \geq 1$ be relatively prime, 
	and let $\{r_i\}_{i=-1}^{\ell+1}$ and
	$\{s_i\}_{i=0}^\ell$ be defined as in Definition 
	\ref{Definition: Euclidean Sequences}.
	Define sequences $\{\sigma_i\}_{i=0}^\ell$ and $\{\rho_i\}_{i=-1}^{\ell+1}$ 
	by
	$\sigma_0\doteq r_\ell-1$, $\sigma_{i} \doteq s_{\ell-i+1}$
	for  $i\in \{1,\ldots, \ell\}$.  Define $\rho_i$
	recursively by setting $\rho_{\ell+1}\doteq 1$, $\rho_\ell \doteq s_0$, 
	and defining 
	\[
		\rho_{i}=\rho_{i+1}\sigma_{i+1}+\rho_{i+2}.
	\]
	Set $m\doteq \rho_0$ and $n\doteq \rho_{-1}$. Then for $m$ and 
	$n$ as defined, we have
	\[
		A(p-q,q) = \left\{ \begin{array}{ll}
			(m,n)	& \mbox{if $\ell \in 2\Z  $},\\
			(n,m)	& \mbox{if $\ell \in 2\Z+1$}.
		\end{array}\right.
		(-1)^{\ell}(-c,d) = \left(|\det A_{\ell-1}|+(r_\ell-1)|\det A_\ell|,
					|\det A_{\ell}|\right)
	\]
	where $c$ and $d$ are the unique integers,
	with $0 < (-1)^{\ell+1} c,(-1)^{\ell}d < p$,
	satisfying $cm+dn=1$,
	and 
	\[
		A_i=	\begin{pmatrix} 
					s_1 & 1 	& 		 &    \\
					1	& -s_2	& 1		 &    \\
						& 1		& \ddots & 1  \\
						&		& 1		 & (-1)^{i+1}s_i
				\end{pmatrix}.
	\]
\end{lem}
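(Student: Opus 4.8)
The plan is to establish the two asserted equalities essentially independently, and then reconcile them with Yamada's original definition. First I would verify the recursive identities defining the $\rho_i$ and $\sigma_i$ amount to running the Euclidean algorithm for the pair $(n,m)$ in reverse: by construction $\sigma_0 = r_\ell - 1$ and $\sigma_i = s_{\ell-i+1}$, so the continued fraction $[\sigma_0+1, \sigma_1, \ldots, \sigma_\ell]$ is obtained from $[s_0, \ldots, s_\ell]$ by reversal (after absorbing the $-1$), and one computes directly that $\rho_{-1}/\rho_0 = n/m$ is the convergent attached to this reversed word. This is the step where I would lean on Lemma~\ref{Lemma: intersection form algebra} and the identification of linear plumbings in Corollaries~\ref{Corollary: boundary of B_p,q is a lens space} and \ref{Corollary: boundary of A_m,n is a lens space}: the plumbing $[-s_0, s_1, \ldots, \pm r_\ell, 1, \mp r_\ell, \ldots, -s_1, s_0]$ bounds $L(p^2, pq-1)$ and, read from the other end, it is the plumbing associated to $(m,n)$ via Corollary~\ref{Corollary: boundary of A_m,n is a lens space}, which forces $\partial A_{m,n} \approx L(p^2,pq-1)$ and hence $A(p-q,q) = (m,n)$ when $\ell$ is even (and $(n,m)$ when $\ell$ is odd, by the $A_{m,n} \approx A_{n,m}$ symmetry together with the fact that reversing a plumbing word of odd length swaps the roles of the two ends).

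Next I would turn to the matrix formula. The matrices $A_i$ are the tridiagonal matrices whose determinants satisfy $\det A_i = (-1)^{i+1} s_i \det A_{i-1} - \det A_{i-2}$, which is exactly the recursion governing the numerators/denominators of the continued fraction $[s_1, -s_2, \ldots]$; comparing with the defining recursion $\rho_i = \rho_{i+1}\sigma_{i+1} + \rho_{i+2}$ and unwinding the index shift $\sigma_i = s_{\ell-i+1}$, one gets $\rho_\ell = s_0$, $\rho_{\ell-1} = s_0 s_1 + 1$, and in general $|\det A_\ell| = \rho_1$, $|\det A_{\ell-1}| = \rho_2$ (up to the sign bookkeeping). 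Then $m = \rho_0 = \rho_1 \sigma_1 + \rho_2 = \rho_1 (r_\ell - 1) + \rho_2$ wait — more precisely $m=\rho_0=\rho_1\sigma_1+\rho_2$ uses $\sigma_1 = s_\ell$, while the $(r_\ell-1)$-term enters through $\rho_0$'s dependence on $\sigma_0$, giving $(-1)^\ell(-c,d) = (|\det A_{\ell-1}| + (r_\ell-1)|\det A_\ell|, |\det A_\ell|)$ once one checks that $(-c,d)$ is the Bézout pair, i.e.\ that $c\rho_0 + d\rho_{-1} = 1$. That Bézout identity is immediate from the standard fact that consecutive convergents of a continued fraction have unit-determinant, applied to the matrix product $A_\ell$, and the sign/range constraints $0 < (-1)^{\ell+1}c, (-1)^\ell d < p$ follow from tracking signs through the alternating recursion and the bound $\rho_{-1} = n < p$ established above.

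The main obstacle will be the sign bookkeeping: the matrices $A_i$ carry alternating signs $(-1)^{i+1}s_i$ on the diagonal, the target formula carries an overall $(-1)^\ell$, and Yamada's $A$ is only defined up to the $(m,n) \leftrightarrow (n,m)$ ambiguity, so matching everything requires a careful parity analysis of $\ell$ and an honest induction on $i$ for the claim "$\rho_{\ell+1-i} = (\text{numerator of the length-}i\text{ truncation})$". I expect to set this up as a single induction proving simultaneously the value of $\rho_{\ell+1-i}$, the sign of $\det A_i$, and the Bézout relation at each stage, with the base cases $i=0,1$ checked by hand from $\rho_{\ell+1}=1$, $\rho_\ell = s_0$. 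The equivalence with Yamada's original combinatorial definition of $A$ is then a matter of observing that Yamada's step-by-step subtraction procedure on $(p-q,q)$ is precisely the Euclidean algorithm recorded by $\{r_i\},\{s_i\}$, so that his output agrees termwise with $(\rho_0, \rho_{-1})$ up to order — this I would state as the content of the lemma and verify by matching the two recursions, rather than re-deriving Yamada's construction from scratch.
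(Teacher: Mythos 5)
Your closing paragraph --- matching Yamada's subtractive recursion against the division-based data $\{r_i\},\{s_i\}$ --- is in fact essentially the whole proof in the paper, but in your plan it is relegated to an afterthought, while the load-bearing step you actually propose is circular. You want to conclude $A(p-q,q)=(m,n)$ from an identification of linear plumbings, invoking Corollary \ref{Corollary: boundary of A_m,n is a lens space} together with Lemma \ref{Lemma: intersection form algebra}. In this paper, the identification of $\partial A_{m,n}$ with $L(p^2,pq-1)$ in that corollary is itself justified by the present lemma (see Remark \ref{Remark: A_m,n 0-framing preserved}: the plumbing description ``could be taken as the definition of $A$'' precisely because this lemma says it agrees with Yamada's $A$), so it cannot be used here. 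Even if you instead import Yamada's Theorem 1.1 as an external fact, the inference ``$\partial A_{m,n}\approx L(p^2,pq-1)$, hence $A(p-q,q)=(m,n)$'' needs an injectivity statement --- that the coprime pair is determined, up to the order swap, by the oriented diffeomorphism type of $\partial A_{m,n}$ --- which you neither state nor prove. The lemma is a purely combinatorial identity about Yamada's function; no amount of plumbing calculus by itself tells you which pair his algorithm outputs.

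The non-circular argument is the direct one, and it is what the paper does: unwind Yamada's recursion for $(a_i,b_i)$, $(m_i,n_i)$, $(c_i,d_i)$ and group the subtraction steps into blocks of lengths $s_0-1,\,s_1,\ldots,s_\ell,\,r_\ell-1$ (the first and last blocks are shorter by one because the algorithm starts at $(p-q,q)$ rather than $(p,q)$ and stops at $(1,1)$; this is exactly where $\sigma_0=r_\ell-1$ and $\rho_\ell=s_0$ enter). One then shows by induction on the block index $j$ that at the end of the $j$-th block $(a_{i_j},b_{i_j})$ equals $(r_j,r_{j-1})$, $(m_{i_j},n_{i_j})$ equals $(\rho_{\ell-j+1},\rho_{\ell-j+2})$, and $(c_{i_j},d_{i_j})$ equals $(|\det A_{j-2}|,|\det A_{j-1}|)$, each up to a swap whose direction alternates with the parity of $j$; the recursion $|\det A_i|=s_i|\det A_{i-1}|+|\det A_{i-2}|$ that you wrote down is what drives the last column, and evaluating at $j=\ell+2$ gives both displayed formulas, including the sign and range claims for $(c,d)$. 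Your phrase that Yamada's output ``agrees termwise with $(\rho_0,\rho_{-1})$'' underestimates this: the agreement is blockwise and order-reversing (the quotients enter the $\rho$-recursion as $\sigma_i=s_{\ell-i+1}$), so the parity bookkeeping and the anomalous first and last blocks must be carried through an honest induction rather than observed. If you carry out that induction --- which your final sentence gestures at --- you recover the paper's proof, and the plumbing detour should simply be dropped.
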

\begin{proof}
	Recall the definition of $A(p-q,q)$, as well as the pair
	$(c,d)$ in \cite{Yamada-Amn}:
	Set $(a_0,b_0)\doteq(p-q,q)$, $(m_0,n_0)\doteq (1,1)$,
	$(c_0,d_0) = (0,1)$.
	If $a_i>b_i$,
	\[
		(a_{i+1},b_{i+1}) \doteq (a_i-b_i,b_i),
			\hspace{.25in}
		(m_{i+1},n_{i+1}) \doteq (m_i+n_i,n_i),
			\hspace{.25in}
		(c_{i+1},d_{i+1}) \doteq (c_i,d_i+c_i)
	\] 
	and if $a_i<b_i$,
	\[
		(a_{i+1},b_{i+1}) \doteq (a_i,b_i-a_i),
			\hspace{.25in}
		(m_{i+1},n_{i+1}) \doteq (m_i,n_i+m_i),
			\hspace{.25in}
		(c_{i+1},d_{i+1}) \doteq (c_i+d_i,d_i).
	\] 
	Then $A(p-q,q)\doteq(m_N,n_N)$ and $-c_N m_N +d_N n_N = 1$
	for $N$ such that $a_N=b_N=1$ - which 
	exists since $(p-q,q)=1$. Since $p-q>q$, there is a subsequence 
	$\{(a_{i_j},b_{i_j})\}_{j=1}^{\ell+2}\subset\{(a_i,b_i)\}_{i=0}^N$
	satisfying
	\[
		(a_{i_j},b_{i_j}) = \left\{
			\begin{array}{ll}
				(r_j,r_{j-1}), & \mbox{if $j\in 2\Z+1$}, \\
				(r_{j-1},r_j), & \mbox{if $j\in 2\Z$}
			\end{array}\right.
	\]
	for $j\in \{1,\ldots, \ell+1\}$, and 
	$i_{\ell+2}=N$. Furthermore, for these indicies, we have
	\[
		(m_{i_j},n_{i_j}) = \left\{
			\begin{array}{ll}
				(\rho_{\ell-j+1},\rho_{\ell-j+2}), & \mbox{if $j\in 2\Z+1$},\\
				(\rho_{\ell-j+2},\rho_{\ell-j+1}), & \mbox{if $j\in 2\Z$},
			\end{array}\right.
	\]
	Thus for $j=\ell+2$ we find that
	\[
		A(p-q,q) = (m_N,n_N) = \left\{
			\begin{array}{ll}
				(\rho_{-1},\rho_{0}), & \mbox{if $\ell \in 2\Z+1$},\\
				(\rho_{0},\rho_{-1}), & \mbox{if $\ell \in 2\Z$}.
			\end{array}\right.
	\]
	To see that this gives the claim for $(c,d)$ as well, we note
	for $j\leq\ell+1$, we have 
	\[
		(c_{i_j},d_{i_j}) = \left\{
			\begin{array}{ll}
				\left(|\det A_{j-2}|,|\det A_{j-1}|\right), 
					& \mbox{if $j\in 2\Z+1$},\\
				\left(|\det A_{j-1}|,|\det A_{j-2}|\right), 
					& \mbox{if $j\in 2\Z$}.
			\end{array}\right.
	\]
	where $A_{-1}\doteq 0$ and $A_0\doteq 1$.    
	Now, to produce such a subsequence, 
	take $i_1=s_0-1>1$ (so that $a_i>q$ for each $i< i_1$)
	similarly, take $i_{k+1} = s_{k}+i_{k}$ for 
	$k\leq \ell$ and take $i_{\ell+2} = i_{\ell+1}+r_\ell-1$.
	By definition,
	\[
		(a_{i_1},b_{i_1}) 
			= (p-q - (s_0-1)q , q) = (r_1,r_0).
	\]
	On the other hand
	\[
		(m_{i_1},n_{i_1})=(1+(s_0-1),1) = (\rho_\ell,\rho_{\ell+1}),
			\hspace{.25in}
		(c_{i_1},d_{i_1})=(0,1+0)=(0,1).
	\]
	For $i_{k+1}$ we have (for $k< \ell+1$),
	\begin{align*}
		(a_{i_{k+1}},b_{i_{k+1}}) 
			&= \left\{
				\begin{array}{ll}
					(r_k,r_{k-1}-s_kr_k), & \mbox{if $k\in 2\Z+1$} \\
					(r_{k-1}-s_kr_k,r_k), & \mbox{if $k\in 2\Z$}
				\end{array}\right.
			 = \left\{
				\begin{array}{ll}
					(r_k,r_{k+1}),	& \mbox{if $k+1\in 2\Z$} \\
					(r_{k+1},r_k), 	& \mbox{if $k+1\in 2\Z+1$}.\\
				\end{array}\right.
	\end{align*}
	and $(a_{i_{\ell+2}},b_{i_{\ell+2}})=(1,1)$. For $k\leq \ell+1$,
	\begin{align*}
		(m_{i_{k+1}},n_{i_{k+1}})
			&= \left\{
				\begin{array}{ll}
					(\rho_{\ell-k+1},\rho_{\ell-k+2}+s_k\rho_{\ell-k+1}), 
						& \mbox{if $k\in 2\Z+1$}\\
					(\rho_{\ell-k+2}+s_k\rho_{\ell-k+1},\rho_{\ell-k+1}), 
						& \mbox{if $k\in 2\Z$}
				\end{array}\right.\\
			&= \left\{
				\begin{array}{ll}
					(\rho_{\ell-k+1},\rho_{\ell-k+2}+\sigma_{\ell-k+1}\rho_{\ell-k+1}), 	
						& \mbox{if $k\in 2\Z+1$}\\
					(\rho_{\ell-k+2}+\sigma_{\ell-k+1}\rho_{\ell-k+1},\rho_{\ell-k+1}), 
						& \mbox{if $k\in 2\Z$}
				\end{array}\right.\\
			&= \left\{
				\begin{array}{ll}
					(\rho_{\ell-k+1},\rho_{\ell-k}),	
						& \mbox{if $k+1\in 2\Z$} \\
					(\rho_{\ell-k},\rho_{\ell-k+1}), 	
						& \mbox{if $k+1\in 2\Z+1$}.
				\end{array}\right.
	\end{align*}
	Finally notice that 
	\[
		\det A_i = (-1)^{i+1}s_i\det A_{i-1} - \det A_{i-2}
	\]
	and that the sign of $A_i$ coincides with 
	the sign of $\sin(\pi i/2)+\cos(\pi i/2)$ giving that 
	$|\det A_i|=s_i|A_{i-1}|+|A_{i-2}|$.  Therefore,	
	\begin{align*}
		(c_{i_{k+1}},d_{i_{k+1}})
			&= \left\{\begin{array}{ll}
				\left(|\det A_{k-2}|+s_k|\det A_{k-1}|,|\det A_{k-1}|\right), 
					& \mbox{if $k\in 2\Z+1$}\\
				\left(|\det A_{k-1}|,|\det A_{k-2}|+s_k|\det A_{k-1}|\right), 
					& \mbox{if $k\in 2\Z$}
			\end{array}\right.\\
			&= \left\{\begin{array}{ll}
				\left(|\det A_{k}|,|\det A_{k-1}|\right), 
					& \mbox{if $k+1\in 2\Z$}\\
				\left(|\det A_{k-1}|,|\det A_{k}|\right), 
					& \mbox{if $k+1\in 2\Z+1$}.
			\end{array}\right.
	\end{align*}
	When passing to $k=\ell+2$, we have
	\begin{align*}
		(c_{i_{\ell+2}},d_{i_{\ell+2}}) = \left\{
			\begin{array}{ll}
				\left(|\det A_{\ell}|,|\det A_{\ell-1}|+(r_\ell-1)|\det A_{\ell}|\right), 
					& \mbox{if $\ell\in 2\Z+1$},\\
				\left(|\det A_{\ell-1}|+(r_\ell-1)|\det A_{\ell}|,|\det A_{\ell-1}|\right), 
					& \mbox{if $j\in 2\Z$}.
			\end{array}\right.
	\end{align*}
	Giving that $(-1)^{\ell+1}\left(|\det A_{\ell-1}|+
		(r_\ell-1)|\right) m+(-1)^\ell |\det A_{\ell}|n=1$.
	\end{proof}
	In general, $c$ and $d$ satisfying $cm+dn=1$ are far from unique. 
	However, specifying them as in Lemma \ref{Lemma: Defining A(p-q,q)},
	(which are equivalent to the coefficients $s$ and $t$ that Yamada defines 
	in \cite{Yamada-Amn}) is crucial, since, as constructed, Yamada proves:
	\begin{lem}[\cite{Yamada-Amn},Lemma 2.5]\label{Lemma: Yamada's Lemma 2.5}
		Suppose that $A(p-q,q)=(m,n)$.
		 If $c$ and $d$ are defined as in Lemma \ref{Lemma: Defining A(p-q,q)},
		 giving that $cm+dn=1$, then $d-c=q$.
	\end{lem}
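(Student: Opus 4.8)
The plan is to combine the closed forms for $c$ and $d$ recorded in Lemma \ref{Lemma: Defining A(p-q,q)} with a matrix description of the Euclidean algorithm. Abbreviate $D_i \doteq |\det A_i|$, so that $D_{-1}=0$, $D_0=1$, and $D_i = s_i D_{i-1} + D_{i-2}$ (the recursion established at the end of the proof of Lemma \ref{Lemma: Defining A(p-q,q)}). When $\ell$ is even, that lemma gives $-c = D_{\ell-1} + (r_\ell-1)D_\ell$ and $d = D_\ell$, so that
\[
	d - c = D_\ell + D_{\ell-1} + (r_\ell-1)D_\ell = r_\ell D_\ell + D_{\ell-1}.
\]
When $\ell$ is odd the pair $A(p-q,q)$ has its two entries interchanged, and interchanging $m$ and $n$ in the relation $cm+dn=1$ interchanges $c$ and $d$; so the odd case follows from the even one exactly as in the symmetry arguments used in Section \ref{Section: Homotopy Invariants}. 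It therefore suffices to prove the arithmetic identity $q = r_\ell D_\ell + D_{\ell-1}$.

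For this, set $M_i \doteq \begin{pmatrix} s_i & 1 \\ 1 & 0 \end{pmatrix}$, so that the recursion $r_{i-1} = r_i s_i + r_{i+1}$ of Definition \ref{Definition: Euclidean Sequences} reads $\begin{pmatrix} r_{i-1} \\ r_i \end{pmatrix} = M_i \begin{pmatrix} r_i \\ r_{i+1} \end{pmatrix}$. Iterating from $i = \ell$ down to $i = 0$ and using $r_{\ell+1}=1$ gives
\[
	\begin{pmatrix} p \\ q \end{pmatrix} = \begin{pmatrix} r_{-1} \\ r_0 \end{pmatrix} = M_0 M_1 \cdots M_\ell \begin{pmatrix} r_\ell \\ 1 \end{pmatrix}.
\]
A short induction from $D_i = s_i D_{i-1} + D_{i-2}$, with base cases $D_{-1}=0$ and $D_0 = 1$, shows that the top row of $M_1 \cdots M_\ell$ is $(D_\ell,\, D_{\ell-1})$ (the empty product, for $\ell=0$, being the identity with top row $(D_0,D_{-1})=(1,0)$). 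Hence the top coordinate of $M_1 \cdots M_\ell \begin{pmatrix} r_\ell \\ 1 \end{pmatrix}$ is $r_\ell D_\ell + D_{\ell-1}$, and, since $M_0$ sends the top coordinate of any column vector to the bottom coordinate of its image, reading off the bottom coordinate of $\begin{pmatrix} p \\ q \end{pmatrix}$ yields $q = r_\ell D_\ell + D_{\ell-1}$. (Equivalently, $q$ is the denominator of the continued fraction $[s_0, s_1, \ldots, s_\ell, r_\ell]$ of $p/q$, i.e.\ the continuant $K(s_1,\ldots,s_\ell,r_\ell) = r_\ell K(s_1,\ldots,s_\ell) + K(s_1,\ldots,s_{\ell-1}) = r_\ell D_\ell + D_{\ell-1}$.)

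The step I expect to require the most care is the sign bookkeeping in the first paragraph: one must check that the normalization $0 < (-1)^{\ell+1}c,\ (-1)^\ell d < p$ in Lemma \ref{Lemma: Defining A(p-q,q)}, together with the $\ell$-parity-dependent labelling of $A(p-q,q)$ there, pins down $d - c$ with the sign making the stated conclusion read $+q$. I would handle this by verifying the even case directly against the explicit nonnegative formulas for $-c$ and $d$ above, and then reducing the odd case to it via the involution $m \leftrightarrow n$, $c \leftrightarrow d$; the remaining manipulations are routine.
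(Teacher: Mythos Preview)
The paper does not prove this lemma; it simply quotes it from \cite{Yamada-Amn}, Lemma~2.5, without argument. Your proof is correct and self-contained, supplying what the paper omits: the identification of $d-c$ with $r_\ell D_\ell + D_{\ell-1}$ from the closed forms in Lemma~\ref{Lemma: Defining A(p-q,q)}, and the continuant/matrix computation showing this equals $q=r_0$.

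One minor remark on scope: under the paper's standing convention $n>m$ (Section~1.1) together with Lemma~\ref{Lemma: Defining A(p-q,q)}, the hypothesis $A(p-q,q)=(m,n)$ already forces $\ell$ even, so your even-case computation alone establishes the lemma as stated; the odd-case symmetry you sketch is what underlies the paper's subsequent remark (just after the lemma) that $c-d=q$ when $A(p-q,q)=(n,m)$, but it is not part of the lemma itself. Your caution about signs is well placed but, as you note, in the even case the explicit nonnegative formulas $-c=D_{\ell-1}+(r_\ell-1)D_\ell$ and $d=D_\ell$ make the check immediate.
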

	Notice that if $A(p-q,q)=(n,m)$, then we clearly have $c-d=q$ instead.  
	Lemma \ref{Lemma: Yamada's Lemma 2.5} allows us to simplify
	the quantity $f_*^{-1}\Gamma(\xi_{\tilde{J}_{m,n}},f_*(\mathfrak{s}))$
	of Proposition \ref{Proposition: c_1 coming from A_m,n}.
	We only consider the case when $\ell \in 2\Z$ (giving that
	$A(p-q,q)=(m,n)$) since the case when $\ell\in 2\Z+1$ is
	symmetric by exchanging $m \leftrightarrow n$
	and $c\leftrightarrow d$.
	\begin{lem}\label{Lemma: Reducing Gamma mod p^2}
		Suppose that $A(p-q,q)=(m,n)$, and that $cm+dn=1$ 
		so that $d-c=q$, then for 
		$(t_0,t_1)\in \Z/2\Z\times\Z/2\Z$, 
		we have
		\[
			\frac{p}{2}\left(q^2+\frac{1-t_1}{2}
						\left(1+q^2\left(mn
						+\frac{1+(-1)^c t_0}{2}p\right)\right)\right)n  
			= \frac{pq}{2}
		\]
		in $\Z/p^2\Z$ whenever $p\in 2\Z+1$ or when $p\in 2\Z$ and
		$(t_0,t_1)=(1,-1)$.
	\end{lem}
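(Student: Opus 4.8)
The plan is to reduce the claimed identity to a couple of elementary congruences, using two facts available from earlier in the paper. First, $p=m+n$: the proof of Proposition~\ref{Proposition: c_1 coming from A_m,n} records $H_1(\partial A_{m,n};\Z)=\langle\gamma_0,\gamma_1 : (m+n)\gamma_1=0,\ mn\gamma_1=-(m+n)\gamma_0\rangle$, which has order $(m+n)^2$, while $\partial A_{m,n}\approx L(p^2,pq-1)$ by Corollary~\ref{Corollary: boundary of A_m,n is a lens space} has $|H_1|=p^2$. Second, $d-c=q$ by Lemma~\ref{Lemma: Yamada's Lemma 2.5}. Reducing $cm+dn=1$ modulo $p=m+n$ and using $m\equiv-n$ gives $qn\equiv 1$, hence $qm\equiv-1$ and $q^2mn\equiv-1\pmod p$; in particular $n$ is the inverse of $q$ modulo $p$. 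I will also fix the auxiliary integers $\mu=(qn-1)/p$ and $\nu=(qm+1)/p$: adding $qn=1+p\mu$ and $qm=-1+p\nu$ gives $q=\mu+\nu$, and eliminating $d$ from $cm+dn=1$, $d-c=q$ yields $c=-\mu$ and $d=\nu$.

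Write $X$ for the integer bracketed factor on the left, so the left side is $\tfrac{p}{2}Xn$ (here $\tfrac{1-t_1}{2}$ and $\tfrac{1+(-1)^ct_0}{2}$ lie in $\{0,1\}$, so $X\in\Z$). When $p$ is odd, $2$ is a unit in $\Z/p^2\Z$, so the identity is equivalent to $pXn\equiv pq\pmod{p^2}$, i.e.\ to $Xn\equiv q\pmod p$. The term $\tfrac{1+(-1)^ct_0}{2}q^2p$ vanishes mod $p$ and $1+q^2mn\equiv 0\pmod p$, so $X\equiv q^2\pmod p$ and $Xn\equiv q^2n\equiv q\pmod p$, as needed; no constraint on $(t_0,t_1)$ is used in this case.

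When $p$ is even one has $(t_0,t_1)=(1,-1)$, and $m,n,q$ are all odd. Put $k=p/2\in\Z$ and $\varepsilon=\tfrac{1+(-1)^c}{2}\in\{0,1\}$, equal to $1$ exactly when $c$ is even, so that $X=q^2+1+q^2mn+\varepsilon q^2p$ and the identity $\tfrac p2Xn\equiv\tfrac{pq}{2}\pmod{p^2}$ becomes $2p\mid Xn-q$. Divisibility by $p$ of $C:=Xn-q$ follows as in the odd case; writing $C=p\kappa$, the point is that $\kappa$ is even. Substituting $qn=1+p\mu$, $qm=-1+p\nu$ into $C$, dividing by $p$, and reducing modulo $2$ (where $p\equiv 0$ and $m,n,q\equiv 1$), leaves $\kappa\equiv\nu+\varepsilon\pmod 2$. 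Finally $\varepsilon=1\iff c$ even $\iff\mu$ even $\iff\nu$ odd, the last equivalence because $\mu+\nu=q$ is odd; hence $\varepsilon\equiv\nu\pmod2$ and $\kappa$ is even. The main obstacle is exactly this: everything collapses trivially modulo $p$, but the even case genuinely needs the congruence modulo $2p$, and it is only there that the chosen spin structure $(t_0,t_1)=(1,-1)$ and the precise normalization of $c$, $d$ from Lemma~\ref{Lemma: Defining A(p-q,q)} enter — aligning the parities and signs of $\mu$, $\nu$, $c$, $d$ is the delicate bookkeeping.
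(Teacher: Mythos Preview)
Your proof is correct and follows essentially the same route as the paper's: both use $p=m+n$ together with $qn\equiv 1\pmod p$ (the paper writes this as $qn=1-cp$, your $\mu=-c$), expand, and in the even case reduce to the parity check that $-cq+1+\tfrac{1+(-1)^c}{2}q$ is even—your $\nu+\varepsilon\equiv 0\pmod 2$ is exactly this statement after substituting $\nu=c+q$. The only cosmetic difference is that the paper manipulates the expression directly to isolate $\tfrac{pq}{2}+\tfrac{p^2}{2}(\cdots)$ and then kills the second term, whereas you recast the identity as a divisibility condition $Xn\equiv q\pmod{p}$ (resp.\ $\pmod{2p}$); and your derivation of $p=m+n$ via $|H_1|$ is a bit roundabout compared with reading it off the linear plumbing identification, but it is valid.
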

	\begin{proof}
		Recall that $m+n=p$ and that 
		$qn=1-cp$. Thus, in $\Z/p^2\Z$
		\begin{align*}
			\frac{p}{2}
				&\left(q(1-cp)+\frac{1-t_1}{2}
					\left(
						n+m(1-cp)^2+\frac{1+(-1)^c t_0}{2} q(1-cp)p
					\right)
				\right)\\
			&= \frac{pq}{2}+\frac{p^2}{2}
				\left(
					-cq+\frac{1-t_1}{2}
					\left(
						1-2c+pc^2+\frac{1+(-1)^ct_0}{2}q
					\right)
				\right)\\
			&= \frac{pq}{2}+\frac{p^2}{2}
				\left(
					-cq+\frac{1-t_1}{2}
					\left(
						1+\frac{1+(-1)^c t_0}{2}q
					\right)
				\right).		
		\end{align*}
		If $p\in 2\Z+1$, then $\Z/p^2\Z$ lacks 2-torsion so that $p^2/2=0$.
		Suppose that $p\in 2\Z$ and that $(t_0,t_1)=(1,-1)$, then
		the above reduces to
		\begin{align*}
			\frac{pq}{2}+\frac{p^2}{2}\left(-cq+1
						+\frac{1+(-1)^c}{2}q \right)	=\frac{pq}{2}	
		\end{align*}
		since in this case, $q\in 2\Z+1$ and the quantity 
		$-cq+1+\frac{1+(-1)^c}{2}q$ is necessarily even.
	\end{proof}
	The following result is used to independently 
	verify that $\partial B_{p,q}\approx L(p^2,pq-1)$.
	To that end, we inductively build the linear plumbing
	of Figure \ref{Figure: B_p,q Boundary Induction - Final Case2)}
	from the middle out.  Furthermore, we choose
	signs on the weights so that 
	$-s_0$ ends up on the left. Since, a fortiori, we have
	\[
		[-s_0,s_1,\ldots,\pm r_\ell,1,\mp r_\ell,\ldots, -s_1,s_0] 
			=\frac{\det Q_{S_{\ell+1}}}{\det Q_{S_\ell^-}}
					= \frac{(-1)^\ell r_{-1}^2}{(-1)^{\ell}\left(1-r_{-1}r_{0}\right)} 
					= \frac{-p^2}{pq-1}
	\]
	where we use that if $[c_1,\ldots,c_n]=-p/q$ 
	then $-p/q=\det C_n/\det C_{n-1}$ for the matrices
	$C_i$ defined in Lemma \ref{Lemma: Moving Generators}.
	\begin{lem}\label{Lemma: intersection form algebra}
		Define $\{r_i\}_{i=-1}^{\ell+2}$ and $\{s_i\}_{i=0}^{\ell+1}$
		as in Definition
		\ref{Definition: Euclidean Sequences}, let $S_i$ be the 
		4-manifold given by plumbing $D^2$-bundles over $S^2$
		according to the weighted graph in Figure \ref{Figure: S_i}.
		\begin{figure}[!ht]
			\[
			\begin{tikzpicture}[scale=.7]
				\draw[thick] 
					(-2.95, 0) -- ( 2.95, 0)
					( 6.75,0)--( 4.95,0)
					(-6.75,0)--(-4.95,0); 
				
				\fill (-4.5,0) circle (1pt);
				\fill (-4  ,0) circle (1pt);
				\fill (-3.5,0) circle (1pt);						
				
				\fill ( 4.5,0) circle (1pt);
				\fill ( 4  ,0) circle (1pt);
				\fill ( 3.5,0) circle (1pt);						
				
				\fill ( 0   , 0) circle (3pt) node[above=3pt] { 1};
				\fill (-1.25, 0) circle (3pt) 
					node[above right=0pt, rotate=40] {$(-1)^{\ell}r_\ell$};
				\fill ( 1.25, 0) circle (3pt) 
					node[above right=0pt, rotate=40]  {$(-1)^{\ell+1}r_\ell$};
				\fill (-2.5,  0) circle (3pt) 
					node[above right=0pt, rotate=40] {$(-1)^{\ell-1}s_\ell$};
				\fill ( 2.5,  0) circle (3pt) 
					node[above right=0pt, rotate=40]  {$(-1)^\ell s_\ell$};
				\fill (-5.5,  0) circle (3pt) 
					node[above right=0pt, rotate=40] {$(-1)^{\ell-i+1}s_{\ell+2-i}$};
				\fill (-6.75, 0) circle (3pt) 
					node[above right=0pt, rotate=40] {$(-1)^{\ell-i}s_{\ell+1-i}$};
				\fill (5.5,   0) circle (3pt) 
					node[above right=0pt, rotate=40] {$(-1)^{\ell-i}s_{\ell+2-i}$};
				\fill (6.75,  0) circle (3pt) 
					node[above right=0pt, rotate=40] {$(-1)^{\ell+1-i}s_{\ell+1-i}$};
			\end{tikzpicture}
			\]
			\caption{\small The 4-manifold $S_i$.}
			\label{Figure: S_i}				
		\end{figure}
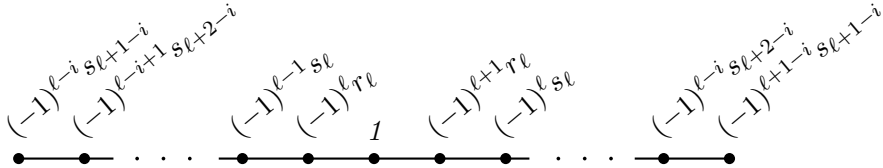
		Let $S_i^+$ be the 4-manifold obtained by plumbing an Euler class
		$(-1)^{\ell-i-1}s_{\ell-i}$ disk bundle to the Euler class
		$(-1)^{\ell-i}s_{\ell+1-i}$ disk bundle in $S_i$.
		Let $S_i^-$ be the 4-manifold obtained by plumbing an Euler class
		$(-1)^{\ell-i}s_{\ell-i}$ disk bundle to the Euler class
		$(-1)^{\ell+1-i}s_{\ell+1-i}$ disk bundle in $S_i$.  Then
		the intersection forms of $S_i$ and $S_i^\pm$ satisfy
		\begin{align*}
			\det Q_{S_i}	
				&=(-1)^{i+1} r_{\ell-i}^2,\\
			\det Q_{S_i^+} 	
				&= (-1)^\ell\left(r_{\ell-i-1}r_{\ell-i}+(-1)^{\ell+i}\right),\\
			\det Q_{S_i^-} 
				&= (-1)^{\ell}\left((-1)^{\ell+i}-r_{\ell-i-1}r_{\ell-i}\right).
		\end{align*}
	\end{lem}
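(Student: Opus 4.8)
The three claimed determinant formulas are most naturally proved together by a single induction on $i$, since the definitions of $S_i^{\pm}$ refer back to $S_i$ and the recursion for $S_{i+1}$ is obtained from $S_i$ by plumbing on two more vertices at each end. First I would set up the base case $i=0$: here $S_0$ is the plumbing whose central portion is the chain with weights $(-1)^{\ell-1}s_\ell,\ (-1)^\ell r_\ell,\ 1,\ (-1)^{\ell+1}r_\ell,\ (-1)^\ell s_\ell$, and a direct computation of the $3\times 3$ determinant for the core $[(-1)^\ell r_\ell, 1, (-1)^{\ell+1}r_\ell]$ together with $r_{\ell+1}=1$, $r_{\ell-1}=r_\ell s_\ell + r_{\ell+1}$ should give $\det Q_{S_0}=(-1)r_\ell^2 = (-1)^{0+1}r_{\ell-0}^2$, and similarly the one-sided extensions give the $S_0^\pm$ formulas. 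I would carry out this base case carefully because the sign bookkeeping is where errors hide.

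**The inductive step.** Assuming the formulas hold for $S_i$, $S_i^+$, $S_i^-$, I would express $\det Q_{S_{i+1}}$ by expanding along the two newly-added terminal vertices (one at each end of the chain) using the standard continued-fraction / tridiagonal determinant expansion: if a plumbing graph $G'$ is obtained from $G$ by attaching a new leaf of weight $c$ to a vertex $v$, then $\det Q_{G'} = c\det Q_G - \det Q_{G \setminus v}$. Applying this at the left end relates $\det Q$ of the longer chain to $\det Q_{S_i}$ and $\det Q_{S_i^+}$ (the latter being exactly $S_i$ with its left terminal vertex deleted and replaced — which is what $S_i^+$ is designed to encode), and symmetrically at the right end. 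The recursion $s_{\ell-i}$-weighted expansion then reduces, via $r_{\ell-i-1}=r_{\ell-i}s_{\ell-i}+r_{\ell-i+1}$, to the claimed formula $\det Q_{S_{i+1}}=(-1)^{i+2}r_{\ell-i-1}^2$. The same mechanism, applied with the appropriate signs $(-1)^{\ell-i-1}s_{\ell-i}$ versus $(-1)^{\ell-i}s_{\ell-i}$, yields the $S_{i+1}^\pm$ formulas from the $S_i$, $S_i^\pm$ data. Throughout I would track the parity of $\ell-i$, since the $\pm$ choices and the factor $(-1)^{\ell+i}$ in the $S_i^\pm$ formulas alternate with $i$.

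**Main obstacle.** The genuine difficulty is not the recursion itself — that is the elementary tridiagonal/continued-fraction expansion — but keeping the army of signs consistent: the Euler classes alternate as $(-1)^{\ell-i}s_{\bullet}$, the determinant formulas carry $(-1)^{i+1}$ and $(-1)^\ell$ and $(-1)^{\ell+i}$, and the two ends of the symmetric chain contribute with opposite orientations. I expect the cleanest route is to introduce, for a bookkeeping device, the "half-chain" determinants $d_i^{\pm}$ built from the weights on one side only (these are essentially $\det C_k$ of Lemma \ref{Lemma: Moving Generators} evaluated along the $\pm$-twisted continued fraction for $-p/q$ versus $-p/(q-p)$), show by an auxiliary induction that $d_i^+ = (-1)^?\, r_{\ell-i}$ and $d_i^- = (-1)^?(r_{\ell-i-1} - \text{something})$ tracking the Euclidean recursion $r_{i-1}=r_is_i+r_{i+1}$, and then assemble $\det Q_{S_i}$ and $\det Q_{S_i^\pm}$ as quadratic expressions in these half-chain quantities. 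With that in hand the three identities follow by substituting $r_{\ell-i-1}=r_{\ell-i}s_{\ell-i}+r_{\ell-i+1}$ and simplifying; the final check that $S_i^-$ produces $(-1)^{\ell+i}-r_{\ell-i-1}r_{\ell-i}$ rather than its negative is exactly the parity computation that must be done with care, and is the step I would write out in full rather than leave to the reader.
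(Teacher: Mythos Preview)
Your overall strategy---simultaneous induction on $i$ for all three determinant formulas, driven by the tridiagonal leaf-expansion $\det Q_{G'}=c\det Q_G-\det Q_{G\setminus v}$ together with the Euclidean recursion $r_{k-1}=r_ks_k+r_{k+1}$---is exactly the paper's approach. The paper compresses the whole argument into the two recursions
\[
\det Q_{S_i^\pm}=(-1)^{\ell-i-(1\pm1)/2}s_{\ell-i}\det Q_{S_i}-\det Q_{S_{i-1}^\mp},
\qquad
\det Q_{S_{i+1}}=(-1)^{\ell-i-1}s_{\ell-i}\det Q_{S_i^-}+(-1)^{\ell-i+1}s_{\ell-i}\det Q_{S_{i-1}^-}+\det Q_{S_{i-1}},
\]
which are precisely the leaf-expansions you describe, and then invokes $r_k=r_{k+1}s_{k+1}+r_{k+2}$.

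Two small corrections to your writeup. First, $S_i^+$ is \emph{not} ``$S_i$ with its left terminal vertex deleted and replaced''; it is $S_i$ with one \emph{additional} vertex plumbed on at the left end. The relevant identification is rather that $S_i$ with its leftmost vertex removed equals $S_{i-1}^-$, which is why the recursion for $\det Q_{S_i^+}$ lands on $\det Q_{S_{i-1}^-}$ (note the superscript flip). Second, your base case is off by one: $S_0$ is the three-vertex chain $(-1)^\ell r_\ell,\ 1,\ (-1)^{\ell+1}r_\ell$ (recall $s_{\ell+1}=r_\ell$), whose determinant is $-r_\ell^2$ as required; the five-vertex chain you wrote is $S_1$. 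Neither of these affects the soundness of the method, but they would cause confusion if left as written.

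Your proposed alternative via ``half-chain'' determinants $d_i^\pm$ is a genuinely different organization that the paper does not pursue; it would also work and has the advantage of isolating the one-sided continued-fraction numerators, at the cost of an extra auxiliary induction. The paper's direct three-formula induction is shorter but, as you correctly anticipate, leaves all the sign bookkeeping to the reader.
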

}
\begin{proof}
	Induct on $i$ by noting that
	\[
		\det Q_{S_i^\pm}
			=(-1)^{\ell-i-(1\pm 1)/2}s_{\ell-i}\det Q_{S_i} 
				- \det Q_{S_{i-1}^\mp},
	\]
	\[
		\det Q_{S_{i+1}} 
			= (-1)^{\ell-i-1}s_{\ell-i}\det Q_{S_{i}^-}
				+(-1)^{\ell-i+1}s_{\ell-i}\det Q_{S_{i-1}^-} 
				+\det Q_{S_{i-1}},
	\]
	as well as the fact that, by definition, $r_{k}=r_{k+1}s_{k+1}+r_{k+2}$.
\end{proof}
	Finally, Lemma \ref{Lemma: Moving Generators}
	requires that we understand certain determinants
	arising from the intersection form of a given linear plumbing. 
	For the examples considered, we calculate those determinants here - they are used to
	express the generator, $\gamma_0$, of $H_1(\partial A_{m,n})$
	in terms of $\mu_0\in H_1(\partial B_{p,q})$.
	\begin{lem}\label{Lemma: Tracing gamma0}
		Let $\{\rho_i\}_{i=-1}^{\ell+2}$ and $\{\sigma_i\}_{i=0}^{\ell+1}$ 
		be as defined in Definition \ref{Definition: Euclidean Sequences},
		(associated to $n$ and $m$) then for each $i\leq \ell+1$ we have
		\[
			\det
			\begin{pmatrix}
				-\rho_\ell & 1 		&		&	\\
				1 	 	& \sigma_\ell 	& 1		&	\\
				  	 	& 1 		& \ddots& 1 \\
				  	 	&			& 1		& (-1)^{\ell+1-i}\sigma_{\ell+1-i}
			\end{pmatrix} 
				= -\left(\sin\left(\frac{\pi}{2}i\right)
						+\cos\left(\frac{\pi}{2}i\right)\right)\rho_{\ell-i}.
		\]			 
	\end{lem}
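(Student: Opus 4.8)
The plan is to prove this by induction on $i$ via a cofactor expansion along the last row, mirroring the computation of $\det A_i$ carried out in the proof of Lemma~\ref{Lemma: Defining A(p-q,q)}. Write $N_i$ for the matrix displayed in the statement and $D_i=\det N_i$, with the convention $N_0=(-\rho_\ell)$; for $i\geq 1$ the matrix $N_i$ is $(i+1)\times(i+1)$, tridiagonal with all off-diagonal entries equal to $1$, top-left entry $-\rho_\ell$, and last diagonal entry $\delta_i$ equal to the displayed $(-1)^{\ell+1-i}\sigma_{\ell+1-i}$ (under the reduction to $\ell\in 2\Z$ used elsewhere in the paper this is the same as $(-1)^{i+1}\sigma_{\ell+1-i}$; the case $\ell\in 2\Z+1$ is symmetric under $m\leftrightarrow n$). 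Since $N_{i-1}$ is the leading principal submatrix of $N_i$, expanding $D_i$ along its last row yields the three-term recursion
\[
D_i=\delta_i\,D_{i-1}-D_{i-2},
\]
which has the same shape as the identity $\det A_i=(-1)^{i+1}s_i\det A_{i-1}-\det A_{i-2}$ recorded in Lemma~\ref{Lemma: Defining A(p-q,q)}.

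Next I would dispatch the base cases by hand: $D_0=-\rho_\ell$ while $\sin 0+\cos 0=1$; and $D_1=-\rho_\ell\sigma_\ell-1=-(\rho_\ell\sigma_\ell+\rho_{\ell+1})=-\rho_{\ell-1}$, using $\rho_{\ell+1}=1$ from Definition~\ref{Definition: Euclidean Sequences}, which matches $-(\sin\tfrac{\pi}{2}+\cos\tfrac{\pi}{2})\rho_{\ell-1}$. For the inductive step, substitute the formulas for $D_{i-1}$ and $D_{i-2}$ into the recursion and apply the Euclidean recursion of Definition~\ref{Definition: Euclidean Sequences} in the form $\rho_{\ell-i}=\sigma_{\ell+1-i}\,\rho_{\ell-i+1}+\rho_{\ell-i+2}$ (this is $\rho_{j-1}=\sigma_j\rho_j+\rho_{j+1}$ with $j=\ell+1-i$). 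The coefficient of $\rho_{\ell-i+2}$ then matches automatically, because $\lfloor (i-2)/2\rfloor$ and $\lfloor i/2\rfloor$ differ by $1$, and the coefficient of $\sigma_{\ell+1-i}\rho_{\ell-i+1}$ matches after a parity check on the exponent of $-1$ coming from $\delta_i$ together with $\lfloor (i-1)/2\rfloor$. This is exactly the phenomenon, already used for the $A_i$ in Lemma~\ref{Lemma: Defining A(p-q,q)}, that $\sin(\tfrac{\pi}{2}i)+\cos(\tfrac{\pi}{2}i)=(-1)^{\lfloor i/2\rfloor}$ records the sign of such alternating tridiagonal determinants.

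The only genuine obstacle is this sign bookkeeping: one must rewrite $\sin(\tfrac{\pi}{2}i)+\cos(\tfrac{\pi}{2}i)$ as the period-four pattern $1,1,-1,-1,\dots$ and verify the two resulting congruences modulo $2$, keeping careful track of how the alternating signs of $\delta_1,\delta_2,\dots$ interact with the floor functions. Everything else is a routine two-step induction. The resulting identity is then combined with Lemma~\ref{Lemma: Moving Generators} to express the generator $\gamma_0\in H_1(\partial A_{m,n};\Z)$ in terms of $\mu_0$, as required in Remark~\ref{Remark: Homology Pullback}.
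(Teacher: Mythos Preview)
Your approach is correct and is essentially the same as the paper's: the paper's proof is the single sentence ``Induct on $i$, using that $\rho_{\ell+1}=1$ and that $\rho_{\ell-i}=\rho_{\ell-i+1}\sigma_{\ell-i+1}+\rho_{\ell-i+2}$,'' and your cofactor expansion and base-case check are exactly the details behind that sentence. One minor remark: your aside about reducing to $\ell\in 2\Z$ and invoking the $m\leftrightarrow n$ symmetry is unnecessary here---the recursion $D_i=(-1)^{\ell+1-i}\sigma_{\ell+1-i}D_{i-1}-D_{i-2}$ and the sign check go through uniformly in $\ell$.
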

	\begin{proof}
		Induct on $i$, using that $\rho_{\ell+1}=1$ and that 
		$\rho_{\ell-i} = \rho_{\ell-i+1}\sigma_{\ell-i+1}+\rho_{\ell-i+2}$.
	\end{proof}
	\begin{cor}\label{Corollary: pulling back gamma0}
		Let $\gamma_0$, $\eta_{\pm1}$ each be meridians
		indicated in Figure \ref{Figure: A Basis for A_m,n}. 
		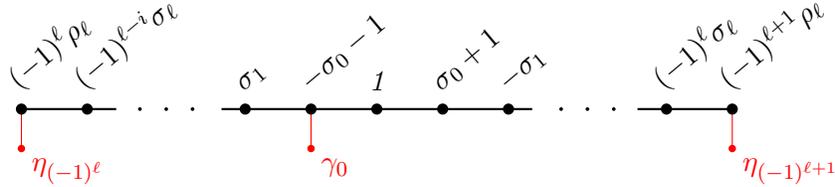
\begin{figure}[!ht]
			\[
			\begin{tikzpicture}[scale=.7]
				\draw[color=red] 
					( 6.75,0)--( 6.75,-.75)
					(-6.75,0)--(-6.75,-.75)
					(-1.25,0)--(-1.25,-.75);
				\fill[color=red] (-1.25, -.75) circle (2pt) 
					node[below right=0pt,color=red] {$\gamma_0$};
				\fill[color=red] (-6.75, -.75) circle (2pt) 
					node[below right=0pt,color=red] {$\eta_{(-1)^\ell}$};
				\fill[color=red] ( 6.75, -.75) circle (2pt) 
					node[below right=0pt,color=red] {$\eta_{(-1)^{\ell+1}}$};
				
				\draw[thick] 
					(-2.95, 0) -- ( 2.95, 0)
					( 6.75,0)--( 4.95,0)
					(-6.75,0)--(-4.95,0); 
				
				\fill (-4.5,0) circle (1pt);
				\fill (-4  ,0) circle (1pt);
				\fill (-3.5,0) circle (1pt);						
				
				\fill ( 4.5,0) circle (1pt);
				\fill ( 4  ,0) circle (1pt);
				\fill ( 3.5,0) circle (1pt);						
				
				\fill ( 0   , 0) circle (3pt) node[above=3pt] { 1};
				\fill (-1.25, 0) circle (3pt) 
					node[above right=0pt, rotate=40] {$-\sigma_0-1$};
				\fill ( 1.25, 0) circle (3pt) 
					node[above right=0pt, rotate=40]  {$\sigma_0+1$};
				\fill (-2.5,  0) circle (3pt) 
					node[above right=0pt, rotate=40] {$\sigma_1$};
				\fill ( 2.5,  0) circle (3pt) 
					node[above right=0pt, rotate=40]  {$-\sigma_1$};
				\fill (-5.5,  0) circle (3pt) 
					node[above right=0pt, rotate=40] {$(-1)^{\ell-i}\sigma_\ell$};
				\fill (-6.75, 0) circle (3pt) 
					node[above right=0pt, rotate=40] {$(-1)^{\ell}\rho_\ell$};
				\fill (5.5,   0) circle (3pt) 
					node[above right=0pt, rotate=40] {$(-1)^{\ell}\sigma_{\ell}$};
				\fill (6.75,  0) circle (3pt) 
					node[above right=0pt, rotate=40] {$(-1)^{\ell+1}\rho_{\ell}$};
			\end{tikzpicture}
			\]
			\caption{\small Expressing $\gamma_0$ in terms of a
			``preferred'' generator, $\eta_{-1}$, for the lens space $\partial A_{m,n}$.}
			\label{Figure: A Basis for A_m,n}				
		\end{figure}
		Then, fixing orientations so all linking is non-negative, we have
		\[
			-\left(\sin\left(\frac{\pi}{2}\ell\right)
						+\cos\left(\frac{\pi}{2}\ell\right)\right)m \cdot \eta_{(-1)^\ell} = 
			\gamma_0 = -\left(\sin\left(\frac{\pi}{2}\ell\right)
						+\cos\left(\frac{\pi}{2}\ell\right)\right)n \cdot\eta_{(-1)^{\ell+1}}.
		\]
	\end{cor}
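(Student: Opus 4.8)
The plan is to read the two relations directly off Lemma~\ref{Lemma: Moving Generators}, applied to the surgery diagram of Figure~\ref{Figure: A Basis for A_m,n} --- which presents $\partial A_{m,n}\approx L(p^2,pq-1)$ by Corollary~\ref{Corollary: boundary of A_m,n is a lens space} --- and to evaluate the determinants that occur using Lemma~\ref{Lemma: Tracing gamma0}. First I would orient the chain so that all linking numbers are nonnegative and index its vertices from the end carrying $\eta_{(-1)^\ell}$: that vertex has weight $(-1)^\ell\rho_\ell$, it is followed by the $\ell$ vertices carrying the signed $\sigma_\ell,\dots,\sigma_1$, and then by the vertex of weight $-\sigma_0-1$ whose meridian is $\gamma_0$ (Remark~\ref{Remark: Homology Pullback}). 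Hence $\gamma_0$ occupies position $\ell+2$, and Lemma~\ref{Lemma: Moving Generators} gives $\gamma_0=(-1)^{\ell+1}(\det C_{\ell+1})\,\eta_{(-1)^\ell}$, where $C_{\ell+1}$ is the tridiagonal matrix assembled from the first $\ell+1$ Euler weights $(-1)^\ell\rho_\ell,\dots,\sigma_1$.

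The second step is to identify $\det C_{\ell+1}$ with the determinant computed in Lemma~\ref{Lemma: Tracing gamma0}. That lemma normalizes the corner entry to $-\rho_\ell$ and fixes one alternation of signs on the $\sigma_j$, whereas $C_{\ell+1}$ carries the signs appearing in Figure~\ref{Figure: A Basis for A_m,n}; conjugating by $\mathrm{diag}(1,-1,1,\dots)$ reverses the (paired) off-diagonal entries and so, together with an overall power of $-1$, matches the two matrices. Up to this bookkeeping $\det C_{\ell+1}$ equals the value of Lemma~\ref{Lemma: Tracing gamma0} at $i=\ell$, namely $-\bigl(\sin(\tfrac{\pi}{2}\ell)+\cos(\tfrac{\pi}{2}\ell)\bigr)\rho_{0}=-\bigl(\sin(\tfrac{\pi}{2}\ell)+\cos(\tfrac{\pi}{2}\ell)\bigr)m$, using $\rho_0=m$. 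Combining this with the factor $(-1)^{\ell+1}$ above and pinning the remaining sign with the ``all linking nonnegative'' convention produces the first equality $\gamma_0=-\bigl(\sin(\tfrac{\pi}{2}\ell)+\cos(\tfrac{\pi}{2}\ell)\bigr)m\cdot\eta_{(-1)^\ell}$.

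For the equality with $\eta_{(-1)^{\ell+1}}$ I would run the same computation from the opposite end. Now the sub-chain from that end up to (but excluding) $\gamma_0$'s vertex has diagonal $(-1)^{\ell+1}\rho_\ell,\dots,-\sigma_1,\sigma_0+1,1$; a one-line determinant identity --- equivalently, blowing down the central $+1$-framed unknot --- collapses the tail $\dots,\sigma_0+1,1$ to an effective entry $\sigma_0$ without altering the determinant, so the relevant minor is the $i=\ell+1$ instance of Lemma~\ref{Lemma: Tracing gamma0}, with value $-\bigl(\sin(\tfrac{\pi}{2}(\ell+1))+\cos(\tfrac{\pi}{2}(\ell+1))\bigr)\rho_{-1}=-\bigl(\sin(\tfrac{\pi}{2}(\ell+1))+\cos(\tfrac{\pi}{2}(\ell+1))\bigr)n$. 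Since $\sin(\tfrac{\pi}{2}(\ell+1))+\cos(\tfrac{\pi}{2}(\ell+1))=(-1)^\ell\bigl(\sin(\tfrac{\pi}{2}\ell)+\cos(\tfrac{\pi}{2}\ell)\bigr)$ and the extra $(-1)^\ell$ is absorbed when matching the shifted corner entry $(-1)^{\ell+1}\rho_\ell$ to the normalization of Lemma~\ref{Lemma: Tracing gamma0}, the two sign changes cancel and yield $\gamma_0=-\bigl(\sin(\tfrac{\pi}{2}\ell)+\cos(\tfrac{\pi}{2}\ell)\bigr)n\cdot\eta_{(-1)^{\ell+1}}$, as claimed.

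The main obstacle is not conceptual but a matter of sign discipline: faithfully matching the normalization of Lemma~\ref{Lemma: Tracing gamma0} to the Euler weights of Figure~\ref{Figure: A Basis for A_m,n}, carrying the $(-1)^{j-1}$ of Lemma~\ref{Lemma: Moving Generators}, dealing with the irregular central triple $-\sigma_0-1,\,1,\,\sigma_0+1$ through the blow-down identity, and fixing every orientation consistently with ``all linking nonnegative.'' Once these choices are made coherently, both displayed relations fall directly out of Lemmas~\ref{Lemma: Moving Generators} and~\ref{Lemma: Tracing gamma0}.
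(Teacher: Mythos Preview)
Your proposal is correct and is exactly the approach the paper takes: the paper's proof is the single sentence ``This follows immediately from Lemma~\ref{Lemma: Moving Generators} and Lemma~\ref{Lemma: Tracing gamma0},'' and what you have written is a faithful unpacking of that sentence, including the blow-down/determinant identity needed to reduce the tail $\sigma_0+1,\,1$ to $\sigma_0$ on the $\eta_{(-1)^{\ell+1}}$ side. The only thing to be careful about is precisely what you flag at the end --- the sign discipline in matching the corner entry and alternation pattern of Lemma~\ref{Lemma: Tracing gamma0} to the Euler weights in Figure~\ref{Figure: A Basis for A_m,n} --- but your outline handles this correctly.
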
	
	\begin{proof}
		This follows immediately from Lemma \ref{Lemma: Moving Generators} and
		Lemma \ref{Lemma: Tracing gamma0}.		
	\end{proof}
		
	}
	\subsection*{Acknowledgments}
	{
		I would like to thank my advisor, Selman Akbulut,
		for making me aware of this problem - which initially
		came about through the solution to an exercise in
		\cite{Akbulut-book} and evolved into the questions arising from
		Yamada's work (\cite{Yamada-Amn, Yamada-LensSurgeries}) 
		addressed here.	
		I would also like to thank Christopher Hays and
		Faramarz Vafaee for many insights as this
		project has progressed.
	}
		
	\bibliographystyle{hep}
	\bibliography{References}

\end{document}